\newtheorem{theorem}{Theorem}[section]
\newtheorem{lemma}[theorem]{Lemma}
\theoremstyle{plain} 
\newcommand{\thistheoremname}{}
\newtheorem*{genericthm*}{\thistheoremname}
\newenvironment{namedthm*}[1]
  {\renewcommand{\thistheoremname}{#1}%
   \begin{genericthm*}}
  {\end{genericthm*}}
\def\la{\left\langle}
\def\ra{\right\rangle}
\def\ln{\left\|}
\def\rn{\right\|}
\def\lb{\left(}
\def\rb{\right)}
\def\lsb{\left[}
\def\rsb{\right]}
\def\lcb{\left\{}
\def\rcb{\right\}}
\def\lab{\left|} 
\def\rab{\right|}
\def\Pom{\frac{8}{3}\beta\log(n)}
\def\tilm{\widehat{m}}
\def\subto{\mbox{ subject to }}
\def\A{\mathcal{A}}
\def\R{\mathbb{R}}
\def\P{\mathcal{P}}
\def\Prob{\mathbb{P}}
\def\E{\mathbb{E}}
\def\I{\mathcal{I}}
\def\H{\mathcal{H}}
\def\T{\mathcal{T}}
\def\hT{\widehat{T}}
\def\hZ{\widehat{Z}}
\def\hU{\widehat{U}}
\def\hV{\widehat{V}}
\def\rank{\mbox{rank}}
\def\trim{\mbox{\tt trim}}
\def\vep{\varepsilon}
\newcommand\numberthis{\addtocounter{equation}{1}\tag{\theequation}}
\newcommand{\sigmin}[1]{\sigma_{\min}\lb #1\rb}
\newcommand{\sigmax}[1]{\sigma_{\max}\lb #1\rb}
\newcommand{\dobr}[1]{\lb #1\rb}
\newcommand{\donr}[1]{\ln #1\rn}
\newcommand*{\rom}[1]{\expandafter\@slowromancap\romannumeral #1@}
\title{Guarantees of Riemannian Optimization for Low Rank  Matrix Completion}
\author{Ke Wei\thanks{Department of Mathematics, University of California at Davis, kewei@math.ucdavis.edu}\quad Jian-Feng Cai\thanks{Department of Mathematics, Hong Kong University of Science and Technology, \{jfcai,masyleung\}@ust.hk}\quad Tony F. Chan\thanks{Office of the President, Hong Kong University of Science and Technology, tonyfchan@ust.hk}\quad Shingyu Leung\footnotemark[2]}
\begin{document}
\maketitle
\begin{abstract}
We study the Riemannian optimization methods on the embedded manifold of low rank matrices for the problem of matrix completion, which is about recovering a low rank matrix from its partial entries.
Assume $m$ entries of an $n\times n$ rank $r$ matrix are sampled independently and uniformly with replacement. We first prove that with high probability the Riemannian gradient descent and conjugate gradient descent algorithms initialized by one step hard thresholding are guaranteed to converge linearly to the measured matrix provided
\begin{align*}
m\geq C_\kappa n^{1.5}r\log^{1.5}(n),
\end{align*}
where $C_\kappa$ is a numerical constant depending on the condition number of the underlying matrix. 
The sampling complexity has been further improved to 
\begin{align*}
m\geq C_\kappa nr^2\log^{2}(n)
\end{align*}
via  the resampled Riemannian gradient descent initialization. The analysis of the new initialization
procedure relies on an asymmetric restricted
isometry property of the sampling operator and the curvature  of the low rank 
matrix manifold. Numerical simulation shows that the algorithms are able to recover a low rank matrix
from nearly the minimum number of measurements.
\end{abstract}

{\bf Keywords.} Matrix completion, Riemannian optimization, low rank matrix manifold, tangent space, gradient descent and conjugate gradient descent methods

{\bf Mathematics Subject Classification.}  15A29, 41A29, 65F10,  68Q25, 15A83,  53B21, 90C26,  65K05

\section{Introduction}\label{sec:introduction}
The problem of matrix completion attempts to recover a low rank matrix from a subset of sampled entries. 
This problem arises from  a wide variety of practical context, such as model reduction \cite{LiVa2009interior}, pattern recognition \cite{Eld2007pattern}, and machine learning \cite{AmFiSrUl2007class,ArEvPo2007mult}.  
 Since the  
data matrix is assumed to be low rank, it is natural to seek the lowest rank matrix consistent with the observed entries by solving a 
rank minimization problem 
\begin{align*}
\min_{Z\in\R^{n\times n}}\rank(Z)\subto\P_\Omega(Z) = \P_\Omega(X),\numberthis\label{eq:rank_min}
\end{align*}
where $X\in\R^{n\times n}$ is the underlying matrix to be reconstructed, $\Omega$ is a subset of indices for the known entries, and 
$\P_\Omega$ is the associated sampling operator which acquires only the entries indexed by $\Omega$. Here we restrict our discussion to square matrices for ease of exposition, but emphasize that all the results can be extended straightforwardly  to the case where the measured matrix is rectangular. Problem \eqref{eq:rank_min} is generally NP-hard \cite{hky2006mccomplexity} and computationally intractable.  
In a seminal paper of Cand\`es and Recht \cite{candesrecht2009mc}, the authors studied nuclear norm minimization for matrix completion, where 
 the rank objective in \eqref{eq:rank_min}  is replaced by the nuclear norm of matrices which is the sum of the
singular values. 
 It was observed in the same paper  that we cannot  expect to recover all the low rank matrices from their partial  known entries. For example, if there exists only one nonzero entry in a matrix, we need to see almost all of its entries to infer that it is not a zero matrix.  This observation motivates the following assumption.
\begin{namedthm*}{A0}[\cite{candesrecht2009mc}]\label{assump_mu0}
Let $X\in\R^{n\times n}$ be a rank $r$ matrix with the reduced singular value decomposition (SVD) $X=U\Sigma V^*$. We assume  $X$ is $\mu_0$-incoherent; that is, there exists an  absolute numerical constant $\mu_0>0$ such that 
\begin{align*}
\ln\P_U\lb e_i\rb\rn\leq\sqrt{\frac{\mu_0 r}{n}}\quad\mbox{and}\quad\ln\P_V\lb e_j\rb\rn\leq\sqrt{\frac{\mu_0 r}{n}}\numberthis\label{eq:def_mu0}
\end{align*}
for $1\leq i,j\leq n$. Here $e_\ell$ $(\ell=i,j)$ is the $\ell$-th  canonical basis of $\mathbb{R}^n$, and $\P_U$ and $\P_V$ are the orthogonal projections onto the column and row spaces of $X$ respectively.
\end{namedthm*}
A matrix is $\mu_0$-incoherent implies  that its singular vectors  are weakly correlated with the canonical basis. In addition,  \eqref{eq:def_mu0} is equivalent to 
$\ln U^{(i)}\rn\leq\sqrt{\mu_0 r/n}$ and $\ln V^{(j)}\rn\leq\sqrt{\mu_0 r/n}$,
where $U^{(i)}$ and $V^{(j)}$ are the $i$-th and $j$-th rows of $U$ and $V$ respectively. 
We also require that the measured matrix cannot be too spiky.
\begin{namedthm*}{A1}[\cite{rhkamso}]
Let $X\in\R^{n\times n}$ be a rank $r$ matrix. We assume that exists another absolute numerical constant $\mu_1$ such that 
\begin{align*} 
\ln X\rn_\infty\leq\mu_1\sqrt{\frac{r}{n^2}}\ln X\rn.\numberthis\label{eq:def_mu1}
\end{align*}
\end{namedthm*}
This manuscript investigates recovery guarantees of a class of Riemannian gradient descent and conjugate gradient algorithms for matrix completion under the sampling with replacement model.
{\it We first establish the local convergence of the Riemannian gradient decent algorithm and a restarted variant of the Riemannian conjugate gradient
descent algorithm. Then we prove that $O(nr^2\log^2(n))$ number of measurements are sufficient for the Riemannian optimization algorithms to converge linearly to the underlying low rank matrix when the algorithms are properly initialized.}
\subsection{Notations and Organization of the Manuscript}
The rest of the manuscript is organized as follows. We first summarize the notations used throughout this manuscript in the remainder of this section. In Sec.~\ref{sec:algs}, we present the Riemannian gradient descent and conjugate gradient descent algorithms based on the embedded manifold of low rank matrices from the perspective of iterative hard thresholding algorithms.  
Then we present the theoretical guarantees of the algorithms.
Empirical observations in Sec.~\ref{sec:numerics} demonstrates the efficiency and robustness of the Riemannian optimization algorithms. The proofs of the main results are presented in Sec.~\ref{sec:proofs} and Sec.~\ref{sec:conclusion} concludes this manuscript with potential future directions. Finally, the Appendix provides proofs for the supporting 
technical lemmas.

Throughout this manuscript, we denote matrices by uppercase letters and vectors by lowercase letters.  In particular, $X$ denotes the $n\times n$ rank $r$ matrix to be reconstructed, with  $\sigmin{X}$  and  $\sigmax{X}$ being its smallest nonzero and largest singular values repectively. The condition number of $X$ is denoted by $\kappa$, which is defined as $\kappa=\frac{\sigmax{X}}{\sigmin{X}}$. We always assume that $X$ obeys the conditions set out in \textbf{A0} and \textbf{A1}. 
For any matrix $Z$, the spectral norm of $Z$ is denoted by $\ln Z\rn$, the Frobenius norm is denoted by $\ln Z\rn_F$, and the maximum magnitude of its entries is denoted by $\ln Z\rn_\infty$. We use $Z^{(i)}$ to represent the $i$-th row of $Z$. The  Euclidean norm of a vector $x$ is denoted by $\ln x\rn$. Operators that map matrices  to matrices are denoted by calligraphic letters. In particular, $\I$ denotes the identity operator. The spectral norm of a linear operator $\mathcal{A}$ is denoted by $\ln\A\rn$.

Given a collection of indices $\Omega$ with $|\Omega|=m$ (counting multiplicity) for the observed entries, we define $p=\frac{m}{n^2}$ which is the probability of each entry being observed, and $\P_{\Omega}$ represents the sampling operator that maps a matrix to its component-wise product with the matrix $\sum_{(i,j)\in\Omega}e_ie_j^*$. In the main results, $\vep_0$ is a positive numerical constant which controls the size of the contraction neighborhood of an algorithm and its value only depends on  the algorithm. Finally, we use $C$ to denote an absolute numerical constant whose value may change according to context.

\section{Algorithms and Main Results}\label{sec:algs}
We consider the {\it sampling with replacement model } for $\Omega$ in which each index  is sampled independently from the uniform distribution on $\{1,\cdots,n\}\times \{1,\cdots,n\}$.  This model can be viewed as a proxy for the uniform sampling model and {the failure probability under the uniform sampling model is less than or equal to the failure probability under the sampling with replacement model \cite{recht2011simple}.}
Though there may exist duplicates in $\Omega$, the maximum number of duplication can be upper bounded with high probability. 
\begin{lemma}[\cite{recht2011simple}]\label{lem:repetition}
With probability at least $1-n^{2-2\beta}$, the maximum number of repetitions of any entry in $\Omega$ is less than 
$\frac{8}{3}\beta\log(n)$ for $n\geq 9$ and $\beta>1$.
\end{lemma}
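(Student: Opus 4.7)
The plan is a standard balls-into-bins argument. For any fixed index $(i,j)\in\{1,\dots,n\}^2$, let $X_{ij}$ denote the number of times $(i,j)$ appears in $\Omega$. Since each of the $m$ samples is drawn uniformly and independently from $\{1,\dots,n\}^2$, the random variable $X_{ij}$ is binomial with parameters $m$ and $1/n^2$, and its upper tail admits the elementary combinatorial bound
\[
\Prob\lsb X_{ij}\geq t\rsb \;\leq\; \binom{m}{t}\,n^{-2t},
\]
obtained by union-bounding, over the $\binom{m}{t}$ size-$t$ subsets of the $m$ samples, the event that every sample in the subset happens to equal $(i,j)$.

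Combining this estimate with $\binom{m}{t}\leq (em/t)^t$ and a second union bound over the $n^2$ indices yields
\[
\Prob\Bigl[\max_{(i,j)} X_{ij}\geq t\Bigr]\;\leq\; n^2\Bigl(\frac{em}{t\,n^2}\Bigr)^t.
\]
I would then set $t=\tfrac{8}{3}\beta\log(n)$. In the regime $m\leq n^2$ (automatic in the matrix completion setting, and the only place $m$ enters beyond being polynomial), the right-hand side is at most $n^2(e/t)^t = \exp\bigl(2\log n-t(\log t -1)\bigr)$.

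To finish, it then remains to verify the elementary inequality $t(\log t-1)\geq 2\beta\log n$ at $t=\tfrac{8}{3}\beta\log n$, which simplifies to $\tfrac{8}{3}\beta\log n\geq e^{7/4}$. This is exactly where the hypotheses $n\geq 9$ and $\beta>1$ are used: together they give $\tfrac{8}{3}\beta\log n\geq\tfrac{8}{3}\log 9\approx 5.86 > e^{7/4}\approx 5.75$, yielding the claimed failure probability $n^{2-2\beta}$. The only nontrivial step is this final numerical check; it is essentially the reason the specific constant $8/3$ (rather than something smaller) appears in the statement. Beyond that arithmetic, the argument is the classical balls-into-bins tail bound, as used, e.g., in Recht's simpler approach to matrix completion.
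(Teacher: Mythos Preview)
The paper does not prove this lemma; it is quoted from Recht's ``simpler approach'' paper and used as a black box. Your argument is precisely the standard balls-into-bins proof given there: the union bound $\Prob[X_{ij}\geq t]\leq\binom{m}{t}n^{-2t}$, the estimate $\binom{m}{t}\le(em/t)^t$, a second union bound over the $n^2$ cells, and then the numerical verification at $t=\tfrac{8}{3}\beta\log n$. The arithmetic in the final step is correct (and, as you note, is exactly what pins down the constant $8/3$). The one tacit hypothesis you invoke is $m\le n^2$; this is implicit in the lemma as stated in both papers---without some cap on $m$ the maximum multiplicity obviously cannot be bounded independently of $m$---so flagging it, as you did, is the right call.
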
It follows immediately from Lem.~\ref{lem:repetition} that with high probability we have $\ln\P_\Omega\rn\leq \frac{8}{3}\beta\log(n)$.

Many computationally efficient  algorithms have been designed to target the matrix completion problem directly by considering the following non-convex formulation\footnote{Note that the objective function in \eqref{eq:obj_func} is the same as $\frac{1}{2}\sum_{(i,j)\in\Omega}(Z_{ij}-X_{ij})^2$, but it might not be equal to $\frac{1}{2}\ln \P_\Omega\lb Z-X\rb\rn_F^2$ because in general  $\P_\Omega$ is not a projection (i.e., $\P_\Omega^2\ne\P_\Omega$) when $\Omega$ is sampled with replacement.}
\begin{align*}
\min_{Z\in\R^{n\times n}} \frac{1}{2}\la Z-X,\P_\Omega\lb Z-X\rb\ra\subto \rank(Z)=r,\numberthis\label{eq:obj_func}
\end{align*}
 see \cite{CGIHT, jmd2010svp,cevher2012matrixrecipes,tw2012nihtmc,haldahernando2009pf, wyz2012lmafit, tannerwei_asd,bart2012riemannian,ngosaad2012scgrassmc,mas2012newgeometry,mmbs2012fixrank,JaNe2015fast,boumalabsil2011rtrmc} and references therein for an incomplete review. Normalized iterative hard thresholding \cite{tw2012nihtmc} (NIHT, also known as SVP \cite{jmd2010svp} or IHT \cite{goldfarbma2011fpca} when the stepsize is fixed) is a projected gradient descent algorithm which updates the estimate along the  gradient descent direction of the objective function in \eqref{eq:obj_func}, 
followed by the projection onto the set of rank $r$ matrices via the hard thresholding operator: 
\begin{align*}
X_{l+1} = \H_r\lb X_l+\alpha_l\P_\Omega\lb X-X_l\rb\rb,\numberthis\label{eq:niht}
\end{align*}
where the hard thresholding operator $\H_r$  first computes the SVD  of a matrix and then  sets all but the $r$ largest singular values to zero
\begin{equation}
\H_r(Z):=P\Lambda_r Q^*\quad\mbox{where $Z=P\Lambda Q^*$ is the SVD of $Z$, and } \Lambda_r(i,i):=\begin{cases}
\Lambda(i,i) & i\leq r\\
0 & i > r.
\end{cases}
\end{equation}

NIHT suffers from the slow asymptotic convergence rate of first order methods as a gradient descent algorithm. To overcome this slow convergence rate of NIHT, a  conjugate gradient iterative hard thresholding (CGIHT) algorithm has been developed in \cite{CGIHT}. In each iteration of CGIHT, the new search direction is computed as a weighted sum of the gradient descent direction and the past search direction, and the selection of the weights guarantees that the new search direction is conjugate orthogonal to the past search direction when projected onto the column space of the current estimate.
In both NIHT and CGIHT, we need to compute the largest $r$ singular values and their associated singular vectors of an $n\times n$ matrix in each iteration. Although Krylov subspace methods can be applied to compute this truncated SVD, it is still computationally expensive especially when $n$ is large and $r$ is moderate. The Riemannian optimization algorithms based on the embedded manifold of rank $r$ matrices can reduce the computational cost
of the truncated SVD dramatically by exploring the local structure of low rank matrices.

We first present the Riemannian gradient descent method for matrix completion in Alg.~\ref{alg:rgrad}. Let $X_l=U_l\Sigma_lV_l^*$ be the current estimate and $T_l$ be the tangent space of the rank $r$ matrix manifold at $X_l$; that is,
 \begin{align*}
 T_l=\lcb U_lZ_1^*+Z_2V_l^*:~Z_1,Z_2\in\R^{n\times r}\rcb.
 \end{align*}
 The new estimate $X_{l+1}$ is obtained by updating $X_l$ along $\P_{T_l}\lb G_l\rb$, the gradient descent direction projected onto the tangent space $T_l$, using the steepest stepsize $\alpha_l$, and then followed by hard thresholding the estimate back onto the set of rank $r$ matrices.
 \begin{algorithm}[t]
\caption{Riemannian Gradient Descent (RGrad)}\label{alg:rgrad}
\begin{algorithmic}
\STATE\textbf{Initilization}: $X_0=U_0\Sigma_0V_0^*$ 
\FOR{$l=0,1,\cdots$}
\STATE 1. $G_l=\P_\Omega\lb X- X_l\rb$
\STATE 2. $\alpha_l=\frac{\ln\P_{T_l}\lb G_l\rb\rn_F^2}{\la \P_{T_l}\lb G_l\rb, \P_\Omega\P_{T_l}\lb G_l\rb\ra}$
\STATE 3. $W_l = X_l+\alpha_l\P_{T_l} \lb G_l\rb$
\STATE 4. $X_{l+1}=\H_r\lb W_l\rb$
\ENDFOR
\end{algorithmic}
\end{algorithm}

In the Riemannian conjugate gradient descent method (Alg.~\ref{alg:rcg}), the search direction is a linear combination of the projected gradient descent direction and the past search direction projected onto the tangent space of the current estimate.  The orthogonalization weight $\beta_l$ in Alg.~\ref{alg:rcg} is selected in a way such that $\P_{T_l}\lb P_l\rb$ is conjugate orthogonal to $\P_{T_l}\lb P_{l-1}\rb$, which follows from CGIHT in \cite{CGIHT}. It is worth noting that there are other selections for $\beta_l$ following from the non-linear conjugate gradient descent method in convex optimization \cite{bart2012riemannian}. In particular, the Riemannian conjugate gradient descent algorithm (LRGeomCG) developed  in \cite{bart2012riemannian} for matrix completion uses the Polak-Ribi\`ere+ selection for $\beta_l$. In this manuscript, we interpret the Riemannian optimization methods directly  as iterative hard thresholding algorithms with subspace projections. For the differential geometry ideas behind the algorithms, we refer the readers to \cite{AbMaSe2008manifold}.
\begin{algorithm}[t]
\caption{Riemannian Conjugate Gradient Descent (RCG)}\label{alg:rcg}
\begin{algorithmic}
\STATE\textbf{Initilization}: $X_0=U_0\Sigma_0V_0^*$, $\beta_0=0$ and $P_{-1}=0$
\FOR{$l=0,1,\cdots$}
\STATE 1. $G_l=\P_\Omega\lb X- X_l\rb$
\STATE 2. $\beta_l =  -\frac{\la\P_{T_l}\lb G_l\rb,\P_\Omega\P_{T_l}\lb P_{l-1}\rb\ra}{\la\P_{T_l}\lb P_{l-1}\rb, \P_\Omega\P_{T_l}\lb P_{l-1}\rb\ra}$
\STATE 3. $P_l=\P_{T_l}\lb G_l\rb+\beta_l \P_{T_l}\lb P_{l-1}\rb$
\STATE 4. $\alpha_l=\frac{\la \P_{T_l}\lb G_l\rb, \P_{T_l}\lb P_l\rb\ra}{\la \P_{T_l}\lb P_l\rb, \P_\Omega\P_{T_l}\lb P_l\rb\ra}$
\STATE 5. $W_l=X_l+\alpha_l P_l$
\STATE 6. $X_{l+1}=\H_r\lb W_l\rb$
\ENDFOR
\end{algorithmic}
\end{algorithm}

Though we still need to compute the truncated SVD of $W_l$ to find its best rank $r$ approximation, the truncated SVD can be computed efficiently using two QR factorizations of $n\times r$ matrices and one full SVD of a $2r\times 2r$ matrix as matrices in $T_l$ are at most rank $2r$. To see this, notice that $W_l$ in Algs.~\ref{alg:rgrad} and \ref{alg:rcg} can be rewritten as
\begin{align*}
W_l = X_l+\P_{T_l}\lb Z_l\rb,
\end{align*} 
where $Z_l=\alpha_lG_l$ in Alg.~\ref{alg:rgrad} and $Z_l=\alpha_l(G_l+\beta_lP_{l-1})$ in Alg.~\ref{alg:rcg}.
So 
\begin{align*}
W_l &= U_l\Sigma_lV_l^*+U_lU_l^*Z_l+Z_lV_lV_l^*-U_lU_l^*Z_lV_lV_l^*\\
&=U_l\lb \Sigma_l+U_l^*Z_lV_l\rb V_l^*+U_lU_l^*Z_l(I-V_lV_l^*)+(I-U_lU_l^*)Z_lV_lV_l^*\\
&:= U_l\lb \Sigma_l+U_l^*Z_lV_l\rb V_l^*+U_lY_1^*+Y_2V_l^*.
\end{align*}
Let $Y_1=Q_1R_1,~Y_2=Q_2R_2$ be the QR factorizations of $Y_1$ and $Y_2$ respectively. Then  $U_l^*Q_2=0$ and $V_l^*Q_1=0$, and we have 
\begin{align*}
W_l &= \begin{bmatrix}
U_l&Q_2
\end{bmatrix}
\begin{bmatrix}
 \Sigma_l+U_l^*Z_lV_l & R_1^*\\
 R_2 & 0
\end{bmatrix}
\begin{bmatrix}
V_l^*\\Q_1^*
\end{bmatrix}\\
&:=\begin{bmatrix}
U_l&Q_2
\end{bmatrix} M_l\begin{bmatrix}
V_l^*\\Q_1^*
\end{bmatrix}.
\end{align*}
Since $\begin{bmatrix}
U_l&Q_2
\end{bmatrix}$ and 
$\begin{bmatrix}
V_l & Q_1
\end{bmatrix}$
are both unitary matrices, the SVD of $W_l$ can be obtained from the SVD of $M_l$, which is  a $2r\times 2r$ matrix.
\subsection{Local Convergence}
We first identify a small neighborhood around the measured low rank matrix such that for any given initial guess in this neighborhood, Algs.~\ref{alg:rgrad} and \ref{alg:rcg} will converge linearly to the true solution. 
For the Riemannian gradient descent algorithm (Alg.~\ref{alg:rgrad}), we have the following theorem.
\begin{theorem}[Local Convergence of Riemannian Gradient Descent]\label{thm:rgrad_det}
Let $X\in\R^{n\times n}$ be the measured rank $r$ matrix and $T$ be the tangent space of the rank $r$ matrix manifold at $X$. Suppose
\begin{align*}
&\ln\P_\Omega\rn\leq\frac{8}{3}\beta\log(n),\numberthis\label{eq:rgrad_det_cond1}\\
&\ln \P_T-p^{-1}\P_T\P_\Omega\P_T\rn\leq\varepsilon_0,\numberthis\label{eq:rgrad_det_cond2}\\
&\frac{\ln X_0-X\rn_F}{\sigmin{X}}\leq\frac{3p^{1/2}\varepsilon_0}{16\beta\log(n)(1+\varepsilon_0)},\numberthis\label{eq:rgrad_det_cond3}
\end{align*}
where $\beta>1$, and $\vep_0$ is a positive numerical constant such that 
\begin{align*}
\nu_g=\frac{18\vep_0}{1-4\vep_0}<1.\numberthis\label{eq:rgrad_mu}
\end{align*} 
Then the iterates $X_l$ generated by Alg.~\ref{alg:rgrad} statisfy
\begin{align*}
\ln X_l-X\rn_F\leq \nu_g^l\ln X_0-X\rn_F.
\end{align*}
\end{theorem}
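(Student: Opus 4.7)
The plan is to establish a one-step contraction $\ln X_{l+1}-X\rn_F \leq \nu_g \ln X_l-X\rn_F$ and then verify by induction that the initial-error hypothesis \eqref{eq:rgrad_det_cond3} is preserved along the trajectory (in fact it only improves since $\nu_g<1$). Writing $D_l := X_l-X$, the two fundamental ingredients will be the asymmetric RIP-like bound \eqref{eq:rgrad_det_cond2} on $T$, which controls the linear part of the error after one gradient step, and the curvature of the fixed-rank manifold, which forces the discrepancy between $T$ and $T_l$ to enter only at second order in $\ln D_l\rn_F/\sigmin{X}$.

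First I would assemble three preparatory estimates. (a) A tangent-space perturbation bound $\ln \P_T-\P_{T_l}\rn \lesssim \ln D_l\rn_F/\sigmin{X}$, standard for the fixed-rank manifold. (b) A curvature bound $\ln(\I-\P_{T_l})D_l\rn_F = \ln(\I-\P_{T_l})X\rn_F \lesssim \ln D_l\rn_F^2/\sigmin{X}$, using crucially that $X_l\in T_l$ so $(\I-\P_{T_l})X_l=0$. (c) Combining (a) with the hypotheses \eqref{eq:rgrad_det_cond1}--\eqref{eq:rgrad_det_cond2}, a transferred RIP-like inequality of the form $\ln \P_{T_l}-p^{-1}\P_{T_l}\P_\Omega\P_{T_l}\rn \leq \vep_0 + O(\log(n)\ln D_l\rn_F/(p\,\sigmin{X}))$. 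Item (c) then yields two-sided control on the optimal stepsize, $\alpha_l p \in [1/(1+\widetilde\vep),1/(1-\widetilde\vep)]$ with $\widetilde\vep$ close to $\vep_0$, so that $|1-\alpha_l p|$ is also $O(\vep_0)$.

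Next I would decompose the residual
\begin{align*}
W_l - X = \lb\P_{T_l}-\alpha_l\P_{T_l}\P_\Omega\P_{T_l}\rb D_l + (\I-\P_{T_l})D_l - \alpha_l\P_{T_l}\P_\Omega(\I-\P_{T_l})D_l,
\end{align*}
and bound the three terms separately. I would split the first as $(1-\alpha_l p)\P_{T_l}D_l + \alpha_l(p\,\P_{T_l}-\P_{T_l}\P_\Omega\P_{T_l})D_l$; using (c) together with the stepsize bound, this contributes $O(\vep_0)\ln D_l\rn_F$, the dominant linear piece. The remaining two terms are both quadratic in $\ln D_l\rn_F$ by (b); the third additionally carries a factor $\ln\P_\Omega\rn/p \lesssim \log(n)/p$, and the initial-error assumption \eqref{eq:rgrad_det_cond3} is calibrated exactly so that this factor times $\ln D_l\rn_F/\sigmin{X}$ stays $O(\vep_0)$. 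The step is then closed by the best-rank-$r$-approximation bound $\ln X_{l+1}-X\rn_F \leq 2\ln W_l-X\rn_F$, which holds since $\H_r(W_l)$ is an optimal rank-$r$ approximation of $W_l$ while $X$ itself has rank $r$.

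The main obstacle will be the detailed bookkeeping in this decomposition: tracking how $\vep_0$ enters each sub-bound and choosing numerical constants so that the second- and third-term contributions are absorbed into $O(\vep_0)\ln D_l\rn_F$ and the final contraction rate simplifies to exactly $\nu_g = 18\vep_0/(1-4\vep_0)$. The precise scaling $p^{1/2}/\log(n)$ in \eqref{eq:rgrad_det_cond3} will arise from balancing $\ln\P_\Omega\rn/p \lesssim \log(n)/p$ against the curvature quadratic $\ln D_l\rn_F/\sigmin{X}$. A secondary subtlety is closing the induction: one must verify that $X_l$ stays rank $r$ with, say, $\sigmin{X_l} \geq \sigmin{X}/2$ throughout so that (a) and (b) remain applicable at iteration $l+1$; this follows from Weyl's inequality as soon as the first contraction step is in hand, since $\ln D_l\rn_F$ then remains much smaller than $\sigmin{X}$.
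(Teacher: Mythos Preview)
Your overall architecture is exactly the paper's: the same three-term decomposition of $W_l-X$, the factor $2$ from best rank-$r$ approximation, the transferred RIP on $T_l$, the two-sided bound on $\alpha_l p$, and the induction on the closeness hypothesis. The preparatory items (a), (b), (c) correspond to the paper's Lemmas~\ref{lem:tech1} and~\ref{lem:tech2}, and the secondary subtlety about $\sigmin{X_l}$ is in fact harmless since all curvature bounds in Lemma~\ref{lem:tech1} are phrased in terms of $\sigmin{X}$, not $\sigmin{X_l}$.

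There is, however, one real gap. For the third term you write that it ``carries a factor $\ln\P_\Omega\rn/p\lesssim\log(n)/p$'' and assert that hypothesis~\eqref{eq:rgrad_det_cond3} is calibrated so that this times $\ln D_l\rn_F/\sigmin{X}$ is $O(\vep_0)$. But with the actual right-hand side $3p^{1/2}\vep_0/(16\beta\log(n)(1+\vep_0))$, the product $\frac{\log(n)}{p}\cdot\frac{\ln D_l\rn_F}{\sigmin{X}}$ is of order $\vep_0/p^{1/2}$, which is \emph{not} $O(\vep_0)$ uniformly in $p$; your contraction factor would then blow up as $p\to 0$ and you cannot reach $\nu_g=18\vep_0/(1-4\vep_0)$. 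The same issue contaminates your item~(c): the transferred-RIP correction you state as $O(\log(n)\ln D_l\rn_F/(p\,\sigmin{X}))$ is likewise too crude by a factor $p^{-1/2}$.

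The missing ingredient is the sharper bound $\ln\P_\Omega\P_{T_l}\rn=\ln\P_{T_l}\P_\Omega\rn\lesssim\sqrt{p\log(n)}$, which is \eqref{eq:tech2_eq1} in Lemma~\ref{lem:tech2}. It follows from \eqref{eq:rgrad_det_cond1}--\eqref{eq:rgrad_det_cond2} via
\[
\ln\P_\Omega\P_T(Z)\rn_F^2\leq\ln\P_\Omega\rn\,\la\P_T(Z),\P_\Omega\P_T(Z)\ra\leq\tfrac{8}{3}\beta\log(n)(1+\vep_0)p\,\ln\P_T(Z)\rn_F^2,
\]
and then a perturbation from $T$ to $T_l$. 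With this in hand the third term (after multiplying by $\alpha_l\sim p^{-1}$) carries only $\log(n)/p^{1/2}$, and \eqref{eq:rgrad_det_cond3} is precisely what makes that product $O(\vep_0)$. This is also what fixes your item~(c) to give $\ln\P_{T_l}-p^{-1}\P_{T_l}\P_\Omega\P_{T_l}\rn\leq 4\vep_0$ and hence the exact stepsize bounds and the constant $18\vep_0/(1-4\vep_0)$.
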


To state a similar result for the Riemannian conjugate gradient descent algorithm, we introduce a restarted 
variant of Alg.~\ref{alg:rcg}; that is, $\beta_l$ is set $0$ and restarting occurs as long as  either of the following conditions 
is violated 
\begin{align}\label{eq:restart_cond}
\begin{split}
\frac{\lab\la\P_{T_l}\lb G_l\rb, \P_{T_l}\lb P_{l-1}\rb\ra\rab}{\ln\P_{T_l}\lb G_l\rb \rn_F\ln \P_{T_l}\lb P_{l-1}\rb\rn_F}\leq \kappa_1,\quad\ln \P_{T_l}\lb G_l\rb\rn_F\leq \kappa_2\ln \P_{T_l}\lb P_{l-1}\rb\rn_F .
\end{split}
\end{align}
The restarting conditions are introduced not only for the sake of proof, but also to improve the robustness of the non-linear conjugate gradient descent methods \cite{wrightnocedal2006no}. The first restarting condition guarantees that the residual will be substantially orthogonal to the past search direction when projected onto the tangent space of current estimate so that the new search direction can be sufficiently gradient related. In the classical CG algorithm for linear systems, the residual is exactly orthogonal to all the past search directions.
Roughly speaking, the second restarting condition implies that the projection of current residual cannot be too large when compared to the projection of the past residual since the search direction is gradient related by the first restarting condition.  In our implementations, we take $\kappa_1=0.1$ and $\kappa_2=1$.

\begin{theorem}[Local Convergence of restarted Riemannian Conjugate Gradient Descent]\label{thm:rcg_det}
Let $X\in\R^{n\times n}$ be the measured rank $r$ matrix and $T$ be the tangent space of the rank $r$ matrix manifold at $X$. Suppose
\begin{align*}
&\ln\P_\Omega\rn\leq\frac{8}{3}\beta\log(n),\numberthis\label{eq:rcg_det_cond1}\\
&\ln \P_T-p^{-1}\P_T\P_\Omega\P_T\rn\leq\varepsilon_0,\numberthis\label{eq:rcg_det_cond2}\\
&
\frac{\ln X_0-X\rn_F}{\sigmin{X}}\leq\frac{3p^{1/2}\varepsilon_0}{16\beta\log(n)(1+\varepsilon_0)},\numberthis\label{eq:rcg_det_cond3}
\end{align*}
where $\beta>1$, and $\vep_0$ is a positive numerical constant. 
Assume that $\vep_0$ obeys  
\begin{align*}
\tau_1+\tau_2<1,\numberthis\label{eq:rcg_gamma}
\end{align*}
where 
\begin{align*}
\tau_1=\frac{18\varepsilon_0-10\kappa_1\varepsilon_0(1+4\varepsilon_0)}{\dobr{1-4\vep_0}-\kappa_1\dobr{1+4\vep_0}}+\frac{4\kappa_2\varepsilon_0+\kappa_1\kappa_2}{1-4\varepsilon_0},\quad\tau_2=\frac{8\kappa_2\varepsilon_0+2\kappa_1\kappa_2}{1-4\varepsilon_0}.
\end{align*}
Then we have $\nu_{cg}=\frac{1}{2}\lb\tau_1+\sqrt{\tau_1^2+4\tau_2}\rb<1$ and the iterates $X_l$ generated by Alg.~\ref{alg:rcg} subject to the restarting
conditions in \eqref{eq:restart_cond} satisfy 
\begin{align*}
\ln X_l-X\rn_F\leq\nu_{cg}^l\ln X_0-X\rn_F.
\end{align*}
When $\kappa_1=\kappa_2=0$, \eqref{eq:rcg_gamma} is reduced to \eqref{eq:rgrad_mu}.
On the other hand we have 
\begin{align*}
\lim_{\vep_0\rightarrow 0} (\tau_1+\tau_2)=3\kappa_1\kappa_2. 
\end{align*}
So if $\kappa_1\kappa_2<1/3$, $\tau_1+\tau_2$ can be less than one when $\vep_0$ is small. In particular, when $\kappa_1=0.1$
and $\kappa_2=1$, a sufficient condition for $\tau_1+\tau_2<1$ is $\varepsilon_0\leq 0.01$. 
\end{theorem}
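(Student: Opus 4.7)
The plan is to mirror the proof of Theorem~\ref{thm:rgrad_det} for the gradient case and track the additional contribution produced by the conjugate correction $\beta_l\P_{T_l}\lb P_{l-1}\rb$. Since $X_{l+1}=\H_r\lb W_l\rb$ is a best rank-$r$ approximation to $W_l$ and $X$ itself is rank $r$, the triangle inequality yields $\ln X_{l+1}-X\rn_F\leq 2\ln W_l-X\rn_F$. Writing $W_l-X=\lb X_l-X\rb+\alpha_l P_l$ and using that $X_l,P_l\in T_l$, the residual splits as
\begin{align*}
W_l-X = \P_{T_l}\lb X_l-X\rb + \alpha_l P_l - \P_{T_l^\perp}\lb X\rb.
\end{align*}
The normal piece $\P_{T_l^\perp}\lb X\rb$ is controlled by the standard rank-$r$ manifold curvature estimate $\ln \P_{T_l^\perp}\lb X\rb\rn_F\leq \ln X_l-X\rn_F^{\,2}/\sigmin{X}$, and under \eqref{eq:rcg_det_cond3} together with \eqref{eq:rcg_det_cond1} applied to cross terms $\P_\Omega\P_{T_l^\perp}\lb X\rb$, this quadratic remainder is absorbed into a linear one provided an inductive hypothesis of the form $\ln X_l-X\rn_F\leq \ln X_0-X\rn_F$ is maintained.

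For the tangent component, substituting $G_l=\P_\Omega\lb X-X_l\rb$ and using $\P_{T_l^\perp}\lb X_l-X\rb=-\P_{T_l^\perp}\lb X\rb$ gives
\begin{align*}
\P_{T_l}\lb X_l-X\rb+\alpha_l P_l=\lb\I-\alpha_l\P_{T_l}\P_\Omega\P_{T_l}\rb\lb X_l-X\rb+\alpha_l\P_{T_l}\P_\Omega\P_{T_l^\perp}\lb X\rb+\alpha_l\beta_l\P_{T_l}\lb P_{l-1}\rb.
\end{align*}
The near-isometry \eqref{eq:rcg_det_cond2} forces $\alpha_l$ into a narrow window around $1/p$, so the first term is $O\lb\vep_0\rb\ln X_l-X\rn_F$ and the second term is of the order $\vep_0\ln\P_{T_l^\perp}\lb X\rb\rn_F$ after a further application of \eqref{eq:rcg_det_cond2}. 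Setting $\beta_l=0$ and combining with the normal-part estimate exactly reproduces the gradient-case contraction $\nu_g=18\vep_0/\lb 1-4\vep_0\rb$ from \eqref{eq:rgrad_mu}, recovering the last assertion of the theorem.

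The genuinely new ingredient is $\alpha_l\beta_l\P_{T_l}\lb P_{l-1}\rb$. From the expression for $\beta_l$ in Alg.~\ref{alg:rcg} and the near-isometry \eqref{eq:rcg_det_cond2},
\begin{align*}
\lab\beta_l\rab\leq \frac{1+\vep_0}{1-\vep_0}\cdot\frac{\ln\P_{T_l}\lb G_l\rb\rn_F}{\ln\P_{T_l}\lb P_{l-1}\rb\rn_F},
\end{align*}
so the second restarting condition in \eqref{eq:restart_cond} bounds $\lab\beta_l\rab$ by a constant multiple of $\kappa_2$. The first restarting condition furnishes the near-orthogonality $\lab\la\P_{T_l}\lb G_l\rb,\P_{T_l}\lb P_{l-1}\rb\ra\rab\leq \kappa_1\ln\P_{T_l}\lb G_l\rb\rn_F\ln\P_{T_l}\lb P_{l-1}\rb\rn_F$, which is exactly what produces the $\kappa_1$ factors in $\tau_1,\tau_2$ when expanding the inner products of $P_l$ against $X_l-X$ and against $\P_\Omega P_l$. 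Since $P_{l-1}$ naturally lives in $T_{l-1}$ rather than in $T_l$, the norm $\ln\P_{T_l}\lb P_{l-1}\rb\rn_F$ must be re-expressed in terms of the previous-iterate error $\ln X_{l-1}-X\rn_F$, which requires comparing the tangent spaces $T_l$ and $T_{l-1}$ via another manifold-curvature estimate, using the induction hypothesis to guarantee the required proximity of $X_l$ and $X_{l-1}$.

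Assembling these bounds produces a two-term recurrence $\ln X_{l+1}-X\rn_F\leq \tau_1\ln X_l-X\rn_F+\tau_2\ln X_{l-1}-X\rn_F$ with exactly the $\tau_1,\tau_2$ of the statement. The companion characteristic equation $x^2=\tau_1x+\tau_2$ has largest root $\tfrac{1}{2}\lb\tau_1+\sqrt{\tau_1^2+4\tau_2}\rb=\nu_{cg}$, which is strictly less than $1$ precisely when $\tau_1+\tau_2<1$; a standard two-step induction then yields $\ln X_l-X\rn_F\leq \nu_{cg}^{\,l}\ln X_0-X\rn_F$, and the limiting statements for $\kappa_1=\kappa_2=0$ and for $\vep_0\rightarrow 0$ are direct substitutions into the formulas for $\tau_1,\tau_2$. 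The main obstacle, as foreshadowed above, is the careful bookkeeping for cross terms linking the two tangent spaces $T_l$ and $T_{l-1}$: the past search direction $P_{l-1}\in T_{l-1}$ must be projected onto $T_l$ and related back to $\ln X_{l-1}-X\rn_F$ before both restarting conditions in \eqref{eq:restart_cond} can be combined with the near-isometry to extract the precise constants $\tau_1,\tau_2$.
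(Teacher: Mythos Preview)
Your overall skeleton (best-rank-$r$ bound, four-way splitting of $W_l-X$, curvature estimate for the normal part) matches the paper, but the handling of the conjugate correction has two real gaps.

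\textbf{The recurrence is not two-term.} You assert that the argument ``produces a two-term recurrence $\ln X_{l+1}-X\rn_F\leq \tau_1\ln X_l-X\rn_F+\tau_2\ln X_{l-1}-X\rn_F$''. This is not what one obtains. The correction $\beta_l\P_{T_l}(P_{l-1})$ depends on $P_{l-1}$, which in turn contains $P_{l-2}$, etc. In the paper this is handled by unrolling completely,
\[
\beta_l\P_{T_l}(P_{l-1})=\sum_{j=0}^{l-1}\Bigl(\prod_{i=j+1}^{l}\beta_i\Bigr)\Bigl(\prod_{k=j}^{l-1}\P_{T_k}\Bigr)(G_j),
\]
and then bounding $\bigl\|\bigl(\prod_{k}\P_{T_k}\bigr)(G_j)\bigr\|_F\leq\|\P_{T_j}(G_j)\|_F\leq C p\,\|X_j-X\|_F$. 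The result is an \emph{all-history} inequality
\[
\|X_{l+1}-X\|_F\leq\rho_1\|X_l-X\|_F+\rho_2\sum_{j=0}^{l-1}\vep_\beta^{\,l-j}\|X_j-X\|_F,
\]
and a separate lemma (Lem.~\ref{lem:tech4}) converts this into the rate $\nu_{cg}$ with $\tau_1=\rho_1+\vep_\beta$, $\tau_2=(\rho_2-\rho_1)\vep_\beta$. The quantities $\tau_1,\tau_2$ therefore arise as parameters of the characteristic equation of this reduction, not as coefficients of a two-term recurrence you can read off directly. Your plan of ``re-expressing $\ln\P_{T_l}(P_{l-1})\rn_F$ in terms of $\ln X_{l-1}-X\rn_F$ via a curvature comparison of $T_l$ and $T_{l-1}$'' does not close, because $P_{l-1}$ is not controlled by $X_{l-1}-X$ alone. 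In fact no tangent-space comparison between $T_l$ and $T_{l-1}$ is needed at all; contraction of projections is enough.

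\textbf{The bounds on $\beta_l$ and $\alpha_l$ are too crude to hit the stated constants.} Your estimate $|\beta_l|\leq\frac{1+\vep_0}{1-\vep_0}\cdot\frac{\|\P_{T_l}(G_l)\|_F}{\|\P_{T_l}(P_{l-1})\|_F}$ uses only the second restarting condition and gives $|\beta_l|\lesssim\kappa_2$. The paper splits the numerator of $\beta_l$ as $\la\P_{T_l}(G_l),(\P_{T_l}\P_\Omega\P_{T_l}-p\P_{T_l})(P_{l-1})\ra+p\la\P_{T_l}(G_l),\P_{T_l}(P_{l-1})\ra$ and applies \emph{both} restarting conditions to obtain the sharper $|\beta_l|\leq\vep_\beta=(4\kappa_2\vep_0+\kappa_1\kappa_2)/(1-4\vep_0)$, which is exactly what feeds into $\tau_1,\tau_2$. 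Similarly, the first restarting condition is needed to bound $|\alpha_l p-1|$ by $\vep_\alpha=4\vep_0/\bigl((1-4\vep_0)-\kappa_1(1+4\vep_0)\bigr)$ (through controlling $\|\P_{T_l}(G_l)\|_F/\|\P_{T_l}(P_l)\|_F$), and this is where the $\kappa_1$ in the first fraction of $\tau_1$ comes from. Without these refinements you will not recover the precise $\tau_1,\tau_2$ of the theorem.
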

The proofs of Thms.~\ref{thm:rgrad_det} and \ref{thm:rcg_det} are presented in Secs.~\ref{subsec:proof_rgrad_det} and \ref{subsec:proof_rcg_det} respectively. 
\subsection{Initialization and Recovery Guarantees}
In Thms.~\ref{thm:rgrad_det} and \ref{thm:rcg_det}, we list three conditions 
to guarantee the convergence of the algorithms. 
The requirement for 
 $\P_\Omega$ to be bounded in \eqref{eq:rgrad_det_cond1} and \eqref{eq:rcg_det_cond1} is just an artifact of the 
 sampling model, and it can be satisfied with probability at least 
 $1-n^{2-2\beta}$ following from Lem.~\ref{lem:repetition}.
 The second condition \eqref{eq:rgrad_det_cond2} and \eqref{eq:rcg_det_cond2}  
 is a local restricted isometry property 
 which has been established in \cite{candesrecht2009mc} for the Bernoulli  model and in \cite{gross2011recoverlowrank,recht2011simple} for the sampling with replacement model,
and it also plays a key role in 
  nuclear norm minimization for matrix completion. For the sampling with
 replacement model, Thm.~\ref{thm:isometry} implies that 
 as long as 
 \begin{align*}
 m\geq C\beta\lb\frac{\mu_0}{\vep_0^2}\rb nr\log(n),
 \end{align*}
 $\ln \P_T-p^{-1}\P_T\P_\Omega\P_T\rn\leq\varepsilon_0$ with probability at least
 $1-2n^{2-2\beta}$.
 Thus the only issue that remains to be addressed is how to produce an initial guess that is sufficiently close to the measured matrix. We will consider two initialization 
 strategies.
 \subsubsection{Initialization via One Step Hard Thresholding} 
 A widely used initialization  for matrix completion is to set $X_0=\H_r\lb p^{-1}\P_\Omega(X)\rb$. 
 The approximation error $\ln X_0-X\rn_F$ can be estimated as 
 follows. 
 \begin{lemma}\label{lem:error_of_init1}
 Suppose $\Omega$ with $|\Omega|=m$ is a set of indices sampled independently and uniformly with replacement.
 Let $X_0=\H_r\lb p^{-1}\P_\Omega(X)\rb$. Then for all $\beta >1$
 \begin{align*}
 \ln X_0-X\rn_F\leq \sqrt{\frac{64\beta\mu_1^2nr^2\log(n)}{3m}}\ln X\rn
 \end{align*}
 with probability at least $1-2n^{1-\beta}$ provided $m\geq 6\beta n \log(n)$.
 \end{lemma}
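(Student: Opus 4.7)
The strategy is to reduce the Frobenius error $\ln X_0-X\rn_F$ to a spectral-norm deviation of the rescaled sampling operator $p^{-1}\P_\Omega(X)-X$, which can then be controlled by the matrix Bernstein inequality.

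First I would use that $X_0=\H_r(p^{-1}\P_\Omega(X))$ is the best rank-$r$ approximation of $p^{-1}\P_\Omega(X)$ in every unitarily invariant norm. Since $X$ is itself rank $r$, this yields $\ln X_0-p^{-1}\P_\Omega(X)\rn\leq \ln X-p^{-1}\P_\Omega(X)\rn$, and the triangle inequality gives $\ln X_0-X\rn\leq 2\ln p^{-1}\P_\Omega(X)-X\rn$. Because $X_0-X$ has rank at most $2r$,
\begin{align*}
\ln X_0-X\rn_F\leq \sqrt{2r}\,\ln X_0-X\rn\leq 2\sqrt{2r}\,\ln p^{-1}\P_\Omega(X)-X\rn.
\end{align*}
Thus it suffices to prove $\ln p^{-1}\P_\Omega(X)-X\rn\leq \ln X\rn\sqrt{8\beta\mu_1^2 nr\log(n)/(3m)}$ with probability at least $1-2n^{1-\beta}$.

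Second, I would set up matrix Bernstein. Under the with-replacement model,
\begin{align*}
p^{-1}\P_\Omega(X)-X=\sum_{k=1}^m S_k,\qquad S_k=\frac{n^2}{m}X_{i_kj_k}e_{i_k}e_{j_k}^*-\frac{1}{m}X,
\end{align*}
with $(i_k,j_k)$ uniform on $\{1,\dots,n\}^2$, so the $S_k$ are i.i.d.\ and $\E S_k=0$. The spikiness assumption \textbf{A1} gives the uniform bound $\ln S_k\rn\leq R:=(\mu_1 n\sqrt{r}+1)\ln X\rn/m$. A direct expansion yields $\E[S_kS_k^*]=(n^2/m^2)D-(1/m^2)XX^*$, where $D$ is the diagonal matrix of squared row norms of $X$. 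Since $\ln X^{(i)}\rn^2\leq n\ln X\rn_\infty^2\leq (\mu_1^2 r/n)\ln X\rn^2$ by \textbf{A1} and the PSD matrix $\E[S_kS_k^*]$ satisfies $\ln\E[S_kS_k^*]\rn\leq (n^2/m^2)\ln D\rn$, we obtain $\ln\E[S_kS_k^*]\rn\leq \mu_1^2 nr\ln X\rn^2/m^2$; an identical argument using column norms handles $\E[S_k^*S_k]$. Summing over $k$, one can take $\sigma^2=\mu_1^2 nr\ln X\rn^2/m$. Matrix Bernstein then yields $\Prob[\ln\sum_k S_k\rn>t]\leq 2n\exp(-t^2/(2(\sigma^2+Rt/3)))$. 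Choosing $t=\ln X\rn\sqrt{8\beta\mu_1^2 nr\log(n)/(3m)}$, the hypothesis $m\geq 6\beta n\log(n)$ forces $Rt/(3\sigma^2)\leq 2/9$, so $\sigma^2+Rt/3\leq (11/9)\sigma^2$ and the exponent is at least $(12/11)\beta\log(n)\geq \beta\log(n)$. The deviation bound therefore holds with probability at least $1-2n^{1-\beta}$, and substituting into the Step~1 reduction yields $\ln X_0-X\rn_F\leq \ln X\rn\sqrt{64\beta\mu_1^2 nr^2\log(n)/(3m)}$.

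The only delicate point is the constant bookkeeping in the Bernstein step: matching the exact factor $64/3$ in the target bound forces the intermediate choice $t^2=(8/3)\beta\mu_1^2 nr\log(n)/m\cdot \ln X\rn^2$, which is just barely enough for the sub-Gaussian term to dominate the sub-exponential correction $Rt/3$ under $m\geq 6\beta n\log(n)$. Everything else is routine: a Weyl-type best-approximation inequality followed by a standard entry-based matrix concentration estimate.
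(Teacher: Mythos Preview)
Your proof is correct and follows essentially the same route as the paper: reduce to the spectral error via $\ln X_0-X\rn_F\leq\sqrt{2r}\,\ln X_0-X\rn\leq 2\sqrt{2r}\,\ln p^{-1}\P_\Omega(X)-X\rn$, then control $\ln p^{-1}\P_\Omega(X)-X\rn$ by matrix Bernstein. The only difference is packaging: the paper invokes the ready-made bound $\ln(\frac{n^2}{m}\P_\Omega-\I)(Z)\rn\leq\sqrt{8\beta n^3\log(n)/(3m)}\,\ln Z\rn_\infty$ (its Thm.~\ref{thm:operator_norm}) and applies \textbf{A1} only at the very end, whereas you rederive the Bernstein estimate from scratch and insert \textbf{A1} into the variance bound; both lead to the identical constant $64/3$.
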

The proof of Lem.~\ref{lem:error_of_init1} is presented in Sec.~\ref{subsec:proof_err_init1}, which follows from a direct application 
of Thm.~\ref{thm:operator_norm} for the sampling with replacement model. A similar inequality has been established in \cite{rhkamso} for $\widehat{X_0}=\H_r\lb p^{-1}\widehat{\P_\Omega(X)}\rb$ under the Bernoulli model using random graph theory, where $\widehat{\P_\Omega(X)}$ is a trimmed matrix obtained by setting all the rows and columns of $\P_\Omega(X)$ with too many observed entries to zero.  Moreover, an application of  the  standard Chernoff bound can further show that with high probability  there are no rows or columns of $\P_\Omega(X)$ with too many entries  \cite{JaNeSa2012ammc}.

It follows from Lem.~\ref{lem:error_of_init1} that the third condition \eqref{eq:rgrad_det_cond3} and \eqref{eq:rcg_det_cond3}  in Thms.~\ref{thm:rgrad_det} and \ref{thm:rcg_det}  can be satisfied with probability at least $1-2n^{1-\beta}$ if 
\begin{align*}
m\geq C\beta^{3/2}\lb\frac{\mu_1(1+\vep_0)}{\vep_0}\rb\kappa n^{3/2}r\log^{3/2}(n).
\end{align*}
Therefore we can establish the following theorem.
\begin{theorem}[Recovery Guarantee \rom{1}]\label{thm:main_init1}
Let $X\in\R^{n\times n}$ be the measured rank $r$ matrix. Suppose $\Omega$ with $|\Omega|=m$ is a set of indices sampled independently and uniformly with replacement. Let $X_0=\H_r\lb p^{-1}\P_\Omega(X)\rb$. Then for all $\beta>1$, the iterates of 
the Riemannian gradient descent algorithm (Alg.~\ref{alg:rgrad}) and the restarted Riemannian conjugate gradient algorithm (Alg.~\ref{alg:rcg}, restarting when either the inequality in \eqref{eq:restart_cond} is violated) with $\kappa_1\kappa_2<1/3$ is guaranteed to converge to $X$ with probability at least $1-3n^{2-2\beta}-2n^{1-\beta}$ provided 
\begin{align*}
m\geq C\beta^{3/2}\max\lcb\frac{\mu_0}{\vep_0^2}, \frac{\mu_1(1+\vep_0)}{\vep_0}\rcb\kappa n^{3/2}r\log^{3/2}(n).
\end{align*}
\end{theorem}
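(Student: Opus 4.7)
}
The plan is to assemble three high-probability events already established in the excerpt, verify that on their intersection the three hypotheses of Theorems~\ref{thm:rgrad_det} and \ref{thm:rcg_det} are simultaneously satisfied, and then invoke those theorems to obtain linear convergence. The deduction is essentially a union bound combined with an algebraic solve for $m$; no new probabilistic estimate is needed.

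First, I would instantiate the operator-norm bound \eqref{eq:rgrad_det_cond1}/\eqref{eq:rcg_det_cond1}: Lemma~\ref{lem:repetition} gives $\|\P_\Omega\|\le\tfrac{8}{3}\beta\log(n)$ with probability at least $1-n^{2-2\beta}$. Second, the local restricted isometry bound \eqref{eq:rgrad_det_cond2}/\eqref{eq:rcg_det_cond2} follows from Theorem~\ref{thm:isometry} (cited in the paragraph after the statement): as long as $m\ge C\beta(\mu_0/\vep_0^2)\,nr\log(n)$, we have $\|\P_T-p^{-1}\P_T\P_\Omega\P_T\|\le\vep_0$ with probability at least $1-2n^{2-2\beta}$. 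Third, Lemma~\ref{lem:error_of_init1} gives
\begin{align*}
\frac{\ln X_0-X\rn_F}{\sigmin{X}}\le\kappa\sqrt{\frac{64\beta\mu_1^2 nr^2\log(n)}{3m}}
\end{align*}
with probability at least $1-2n^{1-\beta}$, since $\|X\|=\kappa\,\sigmin{X}$. Comparing this with the required closeness condition \eqref{eq:rgrad_det_cond3}/\eqref{eq:rcg_det_cond3} and using $p^{1/2}=\sqrt{m}/n$, I would square and rearrange to see that
\begin{align*}
\kappa\sqrt{\frac{64\beta\mu_1^2nr^2\log(n)}{3m}}\le\frac{3\sqrt{m}\,\vep_0}{16 n\,\beta\log(n)(1+\vep_0)}
\end{align*}
is equivalent to $m\ge C\beta^{3/2}\bigl(\mu_1(1+\vep_0)/\vep_0\bigr)\kappa\, n^{3/2}r\log^{3/2}(n)$ for an absolute constant $C$.

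Taking the maximum of the two lower bounds on $m$ yields the hypothesis
\begin{align*}
m\ge C\beta^{3/2}\max\!\lcb\frac{\mu_0}{\vep_0^2},\,\frac{\mu_1(1+\vep_0)}{\vep_0}\rcb\kappa\, n^{3/2}r\log^{3/2}(n),
\end{align*}
which simultaneously enforces the RIP condition and the initialization-error condition; it also dominates the threshold $m\ge 6\beta n\log(n)$ used in Lemma~\ref{lem:error_of_init1}. A union bound over the three failure events contributes $n^{2-2\beta}+2n^{2-2\beta}+2n^{1-\beta}=3n^{2-2\beta}+2n^{1-\beta}$. On the complement, all three hypotheses of Theorem~\ref{thm:rgrad_det} hold, which gives linear convergence of Alg.~\ref{alg:rgrad}; likewise, under $\kappa_1\kappa_2<1/3$ with $\vep_0$ chosen small enough that \eqref{eq:rcg_gamma} holds (e.g.\ $\vep_0\le 0.01$ for $\kappa_1=0.1$, $\kappa_2=1$), Theorem~\ref{thm:rcg_det} yields linear convergence of the restarted Alg.~\ref{alg:rcg}.

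There is no genuine obstacle here; the only care needed is bookkeeping in the algebra that converts the Frobenius-error bound into the sampling-complexity bound (the factor $p^{1/2}$ on the right of \eqref{eq:rgrad_det_cond3} is what produces the extra $\sqrt{n/m}$ and hence bumps $nr\log(n)$ up to $n^{3/2}r\log^{3/2}(n)$), and in absorbing all the $\vep_0$-, $\mu_0$-, $\mu_1$-, and $\kappa$-dependent prefactors into the single $C$ and the $\max$ on the right-hand side. Since $\vep_0$ is a universal constant fixed by the algorithm (as specified in the paper's notation section), the dependence on $\kappa$ is displayed and everything else is absolute.
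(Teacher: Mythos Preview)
Your proposal is correct and matches the paper's own argument essentially line for line: the paper establishes Theorem~\ref{thm:main_init1} precisely by combining Lemma~\ref{lem:repetition}, Theorem~\ref{thm:isometry}, and Lemma~\ref{lem:error_of_init1} via a union bound (contributing $n^{2-2\beta}$, $2n^{2-2\beta}$, and $2n^{1-\beta}$ respectively) and then invoking Theorems~\ref{thm:rgrad_det} and \ref{thm:rcg_det}. Your algebra converting the initialization-error bound into the $n^{3/2}r\log^{3/2}(n)$ sampling requirement is exactly the computation summarized in the paragraph immediately preceding the theorem statement.
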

\subsubsection{Initialization via Resampled Riemannian Gradient Descent and Trimming}
\begin{algorithm}[t]
\caption{Initialization via Resampled Riemannian Gradient Descent and Trimming}\label{alg:init2}
\begin{algorithmic}[]
\STATE\textbf{Partition} $\Omega$  into $L+1$ equal groups: $\Omega_0$, $\cdots$, $\Omega_L$; and the size of each group is denoted by $\tilm$. \\{\bf Set}
$Z_0 = \H_r\lb \frac{\tilm}{n^2}\P_{\Omega_0}(X)\rb$.
\FOR{$l=0,\cdots,L-1$}
\STATE 1. $\widehat{Z}_{l}=\trim(Z_{l})$
\STATE 2. $Z_{l+1}= \H_r\lb \widehat{Z}_{l}+\frac{n^2}{\tilm}\P_{\widehat{T}_{l}}\P_{\Omega_{l+1}}\lb X-\widehat{Z}_{l}\rb\rb$
\ENDFOR
\STATE\textbf{Output}: $X_0=Z_L$
\end{algorithmic}
\end{algorithm}
\begin{algorithm}[t]
\caption{\tt{trim}}\label{alg:trim}
\begin{algorithmic}[]
\STATE{\bf Input:} $Z_l = U_l\Sigma_lV_l^*$
\STATE{\bf Output:} $\widehat{Z}_l=A_l\Sigma_l B_l^*$, where 
\begin{align*}
A_l^{(i)} = \frac{U_l^{(i)}}{\ln U_l^{(i)}\rn}\min\lb \ln U_l^{(i)}\rn,\sqrt{\frac{\mu_0 r}{n}}\rb,\quad B_l^{(i)} = \frac{V_l^{(i)}}{\ln V_l^{(i)}\rn}\min\lb \ln V_l^{(i)}\rn,\sqrt{\frac{\mu_0 r}{n}}\rb
\end{align*}
\end{algorithmic}
\end{algorithm}
The sampling complexity in Thm.~\ref{thm:main_init1} depends on $n^{1.5}$, rather than linearly on $n$.
To attenuate this dependence, we consider a more delicate initialization scheme via the resampled Riemannian gradient descent 
followed by trimming in each iteration, see Alg.~\ref{alg:init2}. The trimming procedure (Alg.~\ref{alg:trim}) projects the estimate onto the set of $\mu_0$-incoherent matrices, while the resampling scheme breaks the dependence between the past iterate and the new sampling set. So we can 
establish the required isometry properties which are needed to prove the linear convergence of the iterates until they reach an order of $p^{1/2}$ neighborhood around the measured matrix where Thms.~\ref{thm:rgrad_det} and \ref{thm:rcg_det} are activated. 
{\it Since the main computational cost in each iteration of  Alg.~\ref{alg:init2} is  $O(|\Omega_l|r)$ flops  \cite{bart2012riemannian}, the total  computational cost of Alg.~\ref{alg:init2} is only slightly larger than one iteration of Alg.~\ref{alg:rgrad} applied on the full sampling set $\Omega$.}
The output of Alg.~\ref{alg:init2} satisfies the following property. 
\begin{lemma}\label{thm:init2}
Let $X\in\R^{n\times n}$ be the measured rank $r$ matrix. Then for all $\beta>1$,  with probability at least $1-2n^{1-\beta}-4Ln^{2-2\beta}$,
\begin{align*}
\ln X_0-X\rn_F\leq \lb\frac{5}{6}\rb^{L}\frac{\sigmin{X}}{256\kappa^2}
\end{align*}
provided $\tilm\geq C\beta\max\lcb\mu_0,\mu_1^2\rcb\kappa^6nr^2\log(n)$.
\end{lemma}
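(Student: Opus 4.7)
The proof is by induction on $l$ with the invariant $\ln Z_l - X\rn_F \le (5/6)^l \sigmin{X}/(256\kappa^2)$. The base case ($l=0$) applies Lem.~\ref{lem:error_of_init1} to the subsample $\Omega_0$ of size $\tilm$: using $\ln X\rn = \kappa \sigmin{X}$, the bound $\ln Z_0 - X\rn_F \le \sqrt{64\beta\mu_1^2 nr^2\log(n)/(3\tilm)}\,\kappa \sigmin{X}$ drops below $\sigmin{X}/(256\kappa^2)$ precisely when $\tilm \gtrsim \beta\mu_1^2\kappa^6 nr^2\log(n)$, with probability at least $1-2n^{1-\beta}$. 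This explains the $\kappa^6$ factor in the sampling hypothesis.

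For the inductive step, assume the invariant at level $l$ and set $E_l = \hZ_l - X$. Two preliminary observations drive the argument. First, trimming preserves the error up to a universal factor, $\ln E_l\rn_F \le C_t \ln Z_l - X\rn_F$, by a direct computation using that $X$ is $\mu_0$-incoherent---each row of its SVD factors already lies in the $\ell_2$-ball of radius $\sqrt{\mu_0 r/n}$, so truncating the rows of $U_l, V_l$ to that ball only moves them closer to the corresponding rows of $U,V$. By construction $\hZ_l$ is then $\mu_0$-incoherent. Second, since $\Omega_{l+1}$ is drawn independently of $\Omega_0,\ldots,\Omega_l$, both $\hZ_l$ and its tangent space $\hT_l$ are independent of $\Omega_{l+1}$; incoherence together with independence licences Thm.~\ref{thm:isometry} applied to $\hT_l$ against $\Omega_{l+1}$, so that $\ln \P_{\hT_l} - (n^2/\tilm)\P_{\hT_l}\P_{\Omega_{l+1}}\P_{\hT_l}\rn \le \vep$ with probability $\ge 1-2n^{2-2\beta}$ provided $\tilm \gtrsim \beta\mu_0 nr\log(n)/\vep^2$.

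Using $\hZ_l \in \hT_l$, I would decompose
\begin{align*}
W_l - X &= \lb\P_{\hT_l} - \tfrac{n^2}{\tilm}\P_{\hT_l}\P_{\Omega_{l+1}}\P_{\hT_l}\rb\P_{\hT_l}(E_l) \\
&\quad + \P_{\hT_l^\perp}(E_l) - \tfrac{n^2}{\tilm}\P_{\hT_l}\P_{\Omega_{l+1}}\P_{\hT_l^\perp}(E_l).
\end{align*}
The first summand is $\le \vep\ln E_l\rn_F$ by the RIP. For the remaining pieces, $\hZ_l \in \hT_l$ forces $\P_{\hT_l^\perp}(E_l) = -\P_{\hT_l^\perp}(X)$, and the curvature of the rank-$r$ manifold yields $\ln \P_{\hT_l^\perp}(X)\rn_F \le \ln E_l\rn_F^2/\sigmin{\hZ_l} \le 2\ln E_l\rn_F^2/\sigmin{X}$ via Weyl. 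Combining with $\ln \H_r(W_l) - X\rn_F \le 2\ln W_l - X\rn_F$ (best-rank-$r$ property) gives $\ln Z_{l+1} - X\rn_F \le 2C_t(\vep + O(1/\kappa^2))\ln Z_l - X\rn_F$, which is $\le (5/6)\ln Z_l - X\rn_F$ once $\vep$ is taken a small enough absolute constant. A union bound over the $L$ iterations produces the stated failure probability $1 - 2n^{1-\beta} - 4Ln^{2-2\beta}$.

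The main obstacle is the cross term $(n^2/\tilm)\P_{\hT_l}\P_{\Omega_{l+1}}\P_{\hT_l^\perp}(E_l)$: applying Lem.~\ref{lem:repetition} naively as an operator-norm bound would lose a prohibitive $n^2\log(n)/\tilm$ factor. Here one must invoke the \emph{asymmetric RIP} advertised in the abstract---a Bernstein-type concentration inequality for $(n^2/\tilm)\P_{\hT_l}\P_{\Omega_{l+1}}(M)$ at matrices $M$ that are independent of $\Omega_{l+1}$ and controlled in both Frobenius and $\ell_\infty$ norms. Applied to $M = -\P_{\hT_l^\perp}(X)$, whose Frobenius norm is already quadratically small by curvature, this term contributes only $O(\ln E_l\rn_F^2/\sigmin{X})$ and is absorbed into the contraction, closing the induction.
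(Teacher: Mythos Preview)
Your overall architecture matches the paper's proof: induction, base case via Lem.~\ref{lem:error_of_init1}, symmetric RIP on $\hT_l$ via independence of $\Omega_{l+1}$, curvature bound from Lem.~\ref{lem:tech1}, and an asymmetric RIP for the cross term. Two steps, however, are not correct as written.

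\textbf{Trimming does not preserve the error up to a universal constant.} Your argument that row-truncation ``only moves $U_l,V_l$ closer to $U,V$'' is valid at the level of the \emph{factors}: it gives $\ln A_l-UQ_u\rn_F\le\ln U_l-UQ_u\rn_F\lesssim d/\sigmin{X}$ (this is exactly what the paper proves in Lem.~\ref{lem:init2_aux}). But $\hZ_l=A_l\Sigma_l B_l^*$, and when you reassemble the product the factor perturbations of size $d/\sigmin{X}$ get multiplied by $\ln\Sigma_l\rn\approx\sigmax{X}$, producing a $\kappa$ blow-up. The paper obtains $\ln\hZ_l-X\rn_F\le 8\kappa\ln Z_l-X\rn_F$, and this $\kappa$ is precisely why the induction radius is $\sigmin{X}/(256\kappa^2)$ rather than $\sigmin{X}/C$: after trimming, $\ln E_l\rn_F\le\sigmin{X}/(32\kappa)$, and only then does the curvature term $\ln E_l\rn_F^2/\sigmin{X}$ drop below $\tfrac14\ln Z_l-X\rn_F$. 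A related slip: $\hZ_l$ is not $\mu_0$-incoherent, because $A_l,B_l$ are not orthonormal; one has to pass to their QR factorizations, which costs an extra $\ln R_u^{-1}\rn\le 10/9$ and yields incoherence parameter $\tfrac{100}{81}\mu_0$.

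\textbf{The cross term is not quadratically small.} You propose to treat $(n^2/\tilm)\P_{\hT_l}\P_{\Omega_{l+1}}\P_{\hT_l^\perp}(E_l)$ by a matrix-level Bernstein inequality applied to the fixed matrix $M=-\P_{\hT_l^\perp}(X)$, using that $\ln M\rn_F$ is quadratically small. But a Bernstein bound of this type requires $\ell_\infty$ control of $M$, and there is no reason for $\ln \P_{\hT_l^\perp}(X)\rn_\infty$ to be quadratically small in $\ln E_l\rn_F$; your conclusion that the whole term is $O(\ln E_l\rn_F^2/\sigmin{X})$ is unjustified. The paper's mechanism is different and worth internalizing: one first rewrites
\[
(\I-\P_{\hT_l})(\hZ_l-X)=(\P_U-\P_{\hU_l})(\I-\P_{\hV_l})(\hZ_l-X),
\]
and then applies the \emph{operator-level} asymmetric RIP of Lem.~\ref{lem:tech3},
\[
\Bigl\|\tfrac{n^2}{\tilm}\P_{\hT_l}\P_{\Omega_{l+1}}(\P_U-\P_{\hU_l})-\P_{\hT_l}(\P_U-\P_{\hU_l})\Bigr\|\;\lesssim\;\sqrt{\beta\mu_0 nr\log(n)/\tilm},
\]
which uses only the incoherence of $\hU_l,\hV_l,U,V$ and independence from $\Omega_{l+1}$. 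This yields $I_{10}\lesssim \kappa\sqrt{\beta\mu_0 nr\log(n)/\tilm}\,\ln Z_l-X\rn_F$, a \emph{linear} (not quadratic) contribution whose coefficient is made small by taking $\tilm\gtrsim\beta\mu_0\kappa^2 nr\log(n)$---well within the hypothesis. The factorization through $(\P_U-\P_{\hU_l})$ is the ``asymmetric'' structure that makes the operator bound go through; a black-box concentration inequality on a generic $M$ does not.
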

The proof of Lem.~\ref{thm:init2} is presented in Sec.~\ref{subsec:proof_err_init2}. In Alg.~\ref{alg:init2}, we use fixed stepsize $\frac{n^2}{\tilm}$ for ease of exposition, which can be replaced by the adaptive stepsize similar to Alg.~\ref{alg:rgrad}. Lemma~\ref{thm:init2} implies that if we take 
\begin{align*}
L\geq 6\log\lb\frac{\beta n\log(n)}{24\vep_0}\rb,
\end{align*}
 the third condition \eqref{eq:rgrad_det_cond3} and \eqref{eq:rcg_det_cond3} in Thms.~\ref{thm:rgrad_det} and \ref{thm:rcg_det}   can be satisfied with probability at least $1-2n^{1-\beta}-24\log\lb\frac{\beta n\log(n)}{24\vep_0}\rb n^{2-2\beta}$. 
So together with Lem.~\ref{lem:repetition},  Thms.~\ref{thm:isometry}, \ref{thm:rgrad_det} and \ref{thm:rcg_det}, we can establish the following theorem. 
\begin{theorem}[Recovery Guarantee \rom{2}]\label{thm:main_init2}
Let $X\in\R^{n\times n}$ be the measured rank $r$ matrix. Suppose $\Omega$ with $|\Omega|=m$ is a set of indices sampled independently and uniformly with replacement. Let $X_0$ be the output of Alg.~\ref{alg:init2}. Then for all $\beta>1$, the iterates of 
the Riemannian gradient descent algorithm (Alg.~\ref{alg:rgrad}) and the restarted Riemannian conjugate gradient algorithm (Alg.~\ref{alg:rcg}, restarting when either the inequality in \eqref{eq:restart_cond} is violated) with $\kappa_1\kappa_2<1/3$ is guaranteed to converge to $X$ with probability at least  $1-2n^{1-\beta}-\lb24\log\lb\frac{\beta n\log(n)}{24\vep_0}\rb+3\rb n^{2-2\beta}$ provided 
\begin{align*}
m\geq C\beta\max\lcb\frac{\mu_0}{\vep_0^2},\mu_1^2\rcb\kappa^6nr^2\log(n)\log\lb\frac{\beta n\log(n)}{24\vep_0}\rb.
\end{align*}
\end{theorem}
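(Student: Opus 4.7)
The plan is to treat this theorem as a clean assembly of four ingredients that are already available in the excerpt: the operator norm bound on $\P_\Omega$ from Lemma~\ref{lem:repetition}, the restricted isometry bound for $\P_T - p^{-1}\P_T\P_\Omega\P_T$ from the isometry theorem referenced after Theorem~\ref{thm:rcg_det}, the initialization error bound from Lemma~\ref{thm:init2}, and the local contraction results Theorems~\ref{thm:rgrad_det} and \ref{thm:rcg_det}. The only genuine work is verifying that a single sample budget $m$ simultaneously suffices to trigger all four ingredients, and that the failure probabilities combine into the stated bound.

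First I would invoke Lemma~\ref{lem:repetition} on the full index multiset $\Omega$ to obtain $\ln\P_\Omega\rn\leq\frac{8}{3}\beta\log(n)$ with probability at least $1-n^{2-2\beta}$; this discharges the hypotheses \eqref{eq:rgrad_det_cond1} and \eqref{eq:rcg_det_cond1}. Next I would apply the asymmetric RIP theorem cited just after Theorem~\ref{thm:rcg_det}, which gives $\ln \P_T-p^{-1}\P_T\P_\Omega\P_T\rn\leq\varepsilon_0$ with probability at least $1-2n^{2-2\beta}$ provided $m\geq C\beta(\mu_0/\vep_0^2) n r \log(n)$; this discharges \eqref{eq:rgrad_det_cond2} and \eqref{eq:rcg_det_cond2}. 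Note that this first piece of the sampling budget is dominated by the $nr^2\log(n)\log(\cdot)$ term appearing in the theorem statement, so it imposes no new constraint.

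The main step is to use Lemma~\ref{thm:init2} to produce an $X_0$ satisfying the closeness condition \eqref{eq:rgrad_det_cond3}--\eqref{eq:rcg_det_cond3}. Since \eqref{eq:rgrad_det_cond3} requires the ratio $\ln X_0-X\rn_F/\sigmin{X}$ to be of order $p^{1/2}\vep_0/(\beta\log(n))$ and the lemma delivers a contraction factor $(5/6)^L$, choosing
\begin{align*}
L \geq 6\log\lb\frac{\beta n\log(n)}{24\vep_0}\rb
\end{align*}
makes $\lb 5/6\rb^L/(256\kappa^2)$ small enough to meet \eqref{eq:rgrad_det_cond3} (the $\kappa^2$ factor is absorbed into the $\kappa^6$ appearing in the eventual $\tilm$ requirement). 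Each outer iteration of Algorithm~\ref{alg:init2} consumes a fresh batch of size $\tilm = m/(L+1)$, so requiring $\tilm\geq C\beta\max\{\mu_0,\mu_1^2\}\kappa^6 nr^2\log(n)$ as in Lemma~\ref{thm:init2} translates to
\begin{align*}
m \geq C\beta\max\lcb\mu_0,\mu_1^2\rcb\kappa^6 nr^2\log(n)\log\lb\frac{\beta n\log(n)}{24\vep_0}\rb,
\end{align*}
which is exactly the bound in the statement once combined with the $\mu_0/\vep_0^2$ piece from the RIP step via a maximum. This step, applied with the above $L$, succeeds with probability at least $1-2n^{1-\beta}-4Ln^{2-2\beta}$.

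Finally I would take a union bound over the three failure events (operator-norm bound, RIP, and initialization), yielding total failure probability at most $2n^{1-\beta}+\lb 24\log(\beta n\log(n)/(24\vep_0))+3\rb n^{2-2\beta}$, matching the statement. On this intersection all three hypotheses of Theorems~\ref{thm:rgrad_det} and \ref{thm:rcg_det} hold simultaneously, so linear convergence of both the Riemannian gradient descent iterates and the restarted Riemannian conjugate gradient iterates (under $\kappa_1\kappa_2<1/3$ as in Theorem~\ref{thm:rcg_det}) is immediate. The only subtle point to check is that the resampling in Algorithm~\ref{alg:init2} keeps the batches $\Omega_0,\ldots,\Omega_L$ independent of the batch used by the subsequent descent phase; this is ensured because the full $\Omega$ can be partitioned once at the outset, and the descent phase uses a disjoint block. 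The hardest piece in principle is Lemma~\ref{thm:init2} itself (which controls the per-iteration contraction of the resampled trimmed gradient step via an asymmetric RIP and the manifold curvature), but we are entitled to assume it as stated, so the present proof reduces to probability bookkeeping and arithmetic on the sampling complexities.
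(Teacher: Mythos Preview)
Your proposal is essentially correct and matches the paper's own argument: combine Lemma~\ref{lem:repetition}, Theorem~\ref{thm:isometry}, and Lemma~\ref{thm:init2} (with $L\geq 6\log(\beta n\log(n)/(24\vep_0))$) to discharge the three hypotheses of Theorems~\ref{thm:rgrad_det} and \ref{thm:rcg_det}, and take a union bound.

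One small correction: your final remark that ``the descent phase uses a disjoint block'' is not what the paper does, and in fact no such independence is needed. Algorithms~\ref{alg:rgrad} and \ref{alg:rcg} run on the \emph{full} sampling set $\Omega$, even though the initialization in Algorithm~\ref{alg:init2} already consumed all of $\Omega$ through its partition $\Omega_0,\ldots,\Omega_L$. This is harmless because Theorems~\ref{thm:rgrad_det} and \ref{thm:rcg_det} are purely deterministic: once $\ln\P_\Omega\rn$, $\ln\P_T-p^{-1}\P_T\P_\Omega\P_T\rn$, and $\ln X_0-X\rn_F$ satisfy the stated inequalities, linear convergence follows regardless of any statistical dependence between $X_0$ and $\Omega$. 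So the ``subtle point'' you flag is a non-issue, and your proposed partition into a separate descent block would actually waste samples.
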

\subsection{Related Work}
From the pioneer work of  Recht, Fazel, and Parrilo \cite{rechtfazelparrilo2010nnm,Fazel_thesis} and Cand\`es and Recht \cite{candesrecht2009mc}, the  low rank matrix reconstruction problem has received intensive investigations  from both the theoretical and algorithmic aspects. Recht, Fazel, and Parrilo \cite{rechtfazelparrilo2010nnm} studied  nuclear norm minimization for the low rank matrix recovery problem where each measurement is obtained by taking inner product between the underlying low rank matrix and  a dense measurement matrix. They showed that if the restricted isometry constant  of the sensing operator is smaller than a numerical constant, then nuclear norm minimization is guaranteed to recover a low rank matrix exactly.
Moreover, this condition can be satisfies for certain family of random measurement matrices provided the number of measurements 
is $O(nr)$ \cite{candesplan2009oracle}.  Many non-convex optimization algorithms have also been studied for low rank matrix recovery based on either the restricted isometry constant of the sensing operator or directly on the random measurement models, see \cite{cevher2012matrixrecipes,leebresler2011admira,tw2012nihtmc,jmd2010svp,procrstes_flow,chenwain_fast,zhenglaff,saolre,bhokyrsan,zhaowangliu,whitesangward} and references therein.  In particular, the Riemannian gradient descent and conjugate gradient descent algorithms  for low rank matrix recovery have been studied in \cite{CGIHT_dense}.

Cand\`es and Recht \cite{candesrecht2009mc} first studied matrix completion by nuclear norm minimization under the Bernoulli  model and incoherent conditions, showing that $O(n^{1.2}r\log(n))$ measurements are sufficient for successful recovery with high probability. This result was subsequently sharpened to $O(nr\log^\alpha(n))$  in \cite{gross2011recoverlowrank,recht2011simple,candestao2009mc,chen_incoherent}. The nuclear norm minimization problem can be solved by either  semidefinite programming \cite{VaBo1996sdp} or iterative soft thresholding algorithms \cite{ccs2010svt}. A gradient descent algorithm on the Grassmannian manifold of low dimensional subspaces was studied in \cite{rhkamso}. It was shown that $O(\kappa^6nr^2)$ number of measurements allows for convergence of the algorithm, which was established by 
penalizing the incoherence  of the estimate in the loss function. 
The proof in \cite{rhkamso} relies on the stationary point analysis and linear convergence rate of the algorithm was not shown.
This framework was further extended in \cite{sunluo} to  gradient descent algorithms based on the product factorization of low rank matrices. Alternating minimization was studied in \cite{kes_thesis, JaNeSa2012ammc} with the best known sampling complexity being $O(\kappa^8nr\log\lb\frac{n}{\epsilon}\rb)$,  where the desired accuracy $\epsilon$ was introduced because the algorithm requires a fresh set of measurements in each iteration. {\it In contrast,  Thm.~\ref{thm:init2}  only requires resampling for the initialization stage and our algorithms can converge linearly to the desired low rank matrix (within arbitrary precision)
for the fixed number of measurements.}
\section{Numerical Experiments}\label{sec:numerics}
%
In this section, we present empirical observations of the Riemannian gradient descent and conjugate 
gradient descent algorithms for random tests. {We test three algorithms, namely, the Riemannian gradient descent algorithm (Alg. \ref{alg:rgrad}), the Riemannian conjugate gradient descent algorithm (Alg. \ref{alg:rcg}), and the Riemannian conjugate gradient algorithm being restarted if one of the conditions in \eqref{eq:restart_cond} is violated. These three algorithms are abbreviated as RGrad, RCG, and RCG restarted respectively.  For RCG restarted,} we take
$\kappa_1=0.1$ and $\kappa_2=1$ {in \eqref{eq:restart_cond}}.  {All tested algorithms are initialized by one step hard thresholding instead of Alg. \ref{alg:init2}, as the 
former has already led to} very good performance (see Figs.~\ref{fig_phase}, \ref{fig_time} and \ref{fig_stability}) and preliminary numerical results
didn't find much difference between {these} two initialization strategies for random simulations. The numerical experiments are conducted on a Mac Pro
laptop with 2.5GHz quad-core Intel Core i7 CPUs and 16 GB memory and executed from Matlab
2014b.  
\subsection{Empirical Phase Transition}\label{subsec:phase}
An important question in matrix completion is how many of measurements are needed in order for an 
algorithm to be able to reliably reconstruct a low rank matrix. We investigate the recovery abilities of 
the tested algorithms in the framework of phase transition, which compares the number of 
measurements, $m$, the size of an $n\times n$ matrix, $n^2$, and the minimum number of measurements 
needed to recover an $n\times n$ rank $r$ matrix\footnote{All   $n\times n$ rank $r$ matrices form a smooth embedded manifold of dimension $(2n-r)r$ in the ambient space \cite{bart2012riemannian}.}, $(2n-r)r$, through the {\it undersampling} and 
{\it oversampling} ratios
\begin{align*}
p = \frac{m}{n^2}, \quad q=\frac{(2n-r)r}{m}.
\end{align*}
The unit square $(p, q)\in[0,1]^2$ defines a phase transition space. Given a triple $(n,m,r)$ corresponding to  
a fixed pair of $(p, q)$, we conducts test on {\em ten} random instances. The test rank $r$ matrix
is formed as the product of two random rank $r$ matrices;  that is $X=LR$, where $L\in\R^{n\times r}$ and 
$R\in\R^{r\times n}$ with the entries of $L$ and $R$ sampled from the standard Gaussian distribution.  The measurement 
vector $\P_\Omega(X)$ is obtained by sampling $m$ entries of $X$ uniformly at random.
 An algorithm is  considered to have 
successfully recovered $X$ if it returns a matrix $X_l$ which satisfies 
\begin{align*}
\frac{\ln X_l-X\rn_F}{\ln X\rn_F}\leq 10^{-2}.
\end{align*}
The tests are conducted with $n=800$ and $p$ taking $18$ equispaced values from $0.1$ to $0.95$.

The probabilities  of successful recovery  for the three tested algorithms RGrad, RCG and RCG restarted are displayed in Fig.~\ref{fig_phase}. In the figure, white color indicates that the algorithm can recover all of the ten random test matrices while black color implies the algorithm fails to recover each of the randomly drawn matrices. For each tested algorithm, a clear phase transition occurs when $ q$ is greater than $0.9$ for all the values of $p$. This implies that  all the three tested algorithms are able to recover a rank $r$ matrix from $m=C\cdot(2n-r)r$ number of measurements with $C$ being slightly larger than 1. In addition, Fig.~\ref{fig_phase_RRCG} also shows the effectiveness of our restarting conditions for the Riemannian conjugate gradient descent algorithm. 
\begin{figure*}[!t]
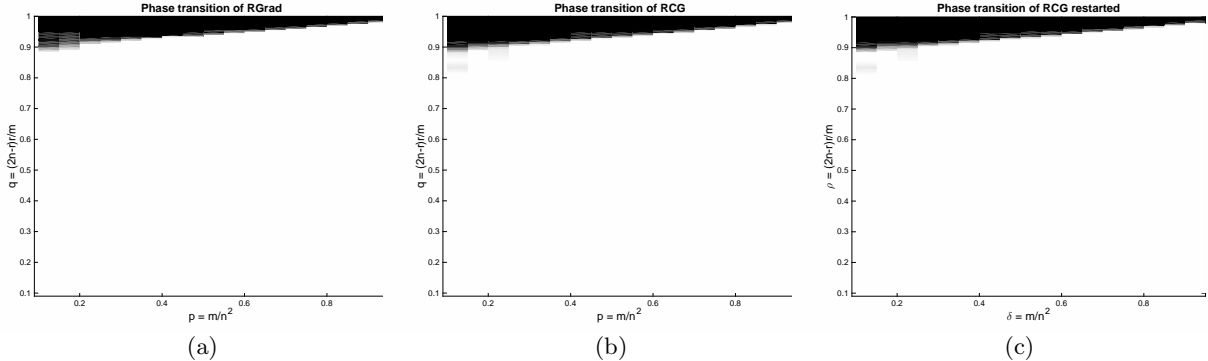

\centering
\subfloat[]{\includegraphics[width=2in]{transition_of_RGrad.eps}
\label{fig_phase_RGD}}
\hfil
\subfloat[]{\includegraphics[width=2in]{transition_of_RCG.eps}
\label{fig_phase_RCG}}
\hfil
\subfloat[]{\includegraphics[width=2in]{transition_of_RRCG.eps}
\label{fig_phase_RRCG}}
\caption{Empirical phase transition curves (a) RGrad, (b) RCG and (c) RCG restarted when $n=800$. Horizontal axis $p=m/n^2$ and 
vertical axis $ q =(2n-r)r/m$. White denotes successful recovery in all ten random tests, and black denotes failure in all tests.}
\label{fig_phase}
\end{figure*}
\subsection{Computation Efficiency}
We compare the computational efficiency of RGrad, RCG, RCG restarted by conducting random tests on matrices of $8000\times 8000$ and  
 rank $100$. The tests are conducted with two different oversampling ratios $1/ q\in\lcb 2,3\rcb$. The algorithms are terminated when the relative residual is less than $10^{-9}$. The relative residual plotted against the number of iterations and the average recovery time are presented in Fig.~\ref{fig_time}. First it can be observed that the convergence
curves for RCG and RCG restarted are almost indistinguishable, differing only in one
or two iterations. This again shows the effectiveness of our restarting conditions.
A close look at the computational results reveals that restarting usually occurs in
the first few iterations for RCG restarted. Moreover, RCG and RCG restarted are
sufficiently faster than RGrad both in terms of  the number of iterations and  in terms of 
the average computation time. It takes less number of iterations for each algorithm to converge below the desired accuracy 
when the number of measurements increases.
 \begin{figure*}[!t]
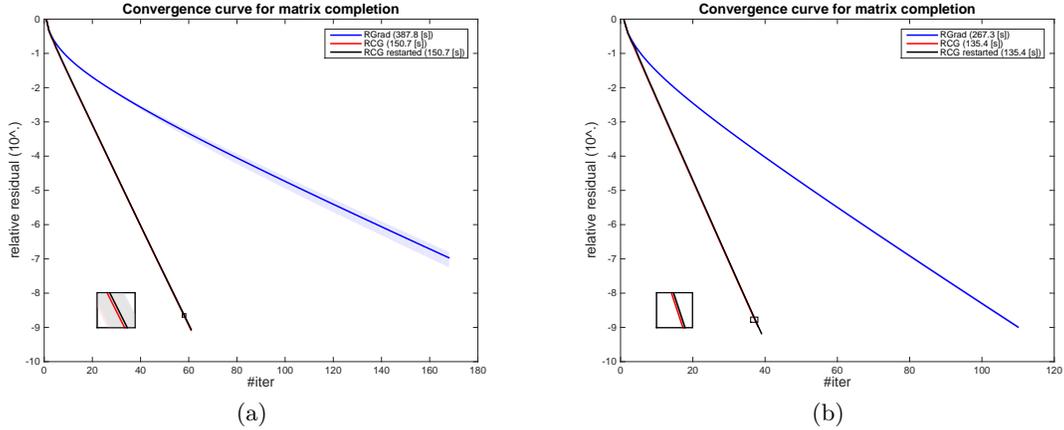

\centering
\subfloat[]{\includegraphics[width=2.5in]{iter_res_os2entry.eps}
\label{fig_time_2}}
\hfil
\subfloat[]{\includegraphics[width=2.5in]{iter_res_os3entry.eps}
\label{fig_time_3}}
\caption{Relative residual (mean and standard deviation over ten random tests) as function of
number of iterations for $n=8000$, $r=100$, $1/ q=2$ (a) and $1/ q=3$ (b). The values after each algorithm are the average computational time
(seconds) for convergence.}
\label{fig_time}
\end{figure*}
\subsection{Robustness to Additive Noise}\label{subsec:noise}
Following the test set-up in last subsection, we further explore performance of the algorithms 
under the measurements with additive noise. Tests with additive noise have  the sampled 
entries $\P_\Omega(X)$ corrupted by the vector
\begin{align*}
e=\sigma\cdot\ln\P_\Omega(X)\rn_F\cdot\frac{w}{\ln w\rn_2},
\end{align*}
where the entries of $w$ are i.i.d standard Gaussian random variables and $\sigma$
is referred to as noise level. We conduct tests with {\it nine} different values of $\sigma$
from $10^{-4}$ to $1$; and for each $\sigma$, {\it ten} random tests are conducted.
The average relative reconstruction error in dB plotted against the signal-to-noise ratio (SNR) is presented 
in Fig.~\ref{fig_stability} for RGrad, RCG and RCG restarted respectively. The plots clearly show the 
desirable linear scaling between the noise levels and the relative errors for all the three tested algorithms.
The relative error decreases as the number of measurements increases.
\begin{figure*}[!t]
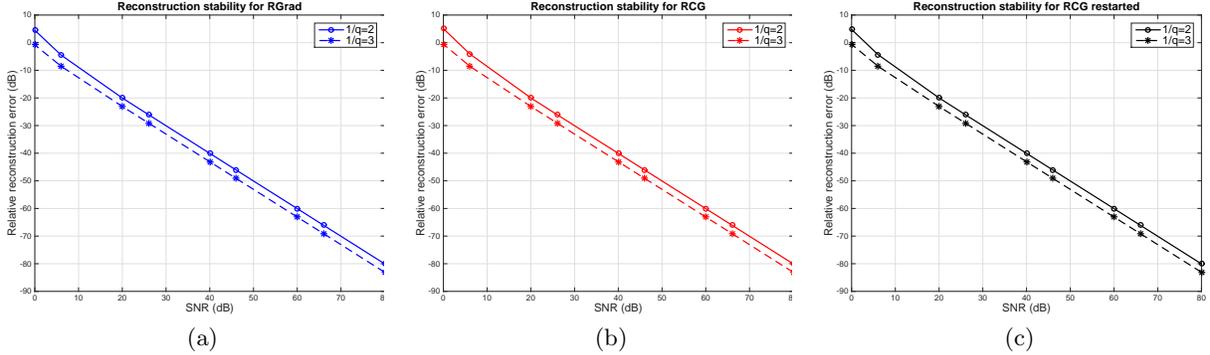

\centering
\subfloat[]{\includegraphics[width=2in]{stability_of_RGrad.eps}
\label{fig_stability_RGD}}
\hfil
\subfloat[]{\includegraphics[width=2in]{stability_of_RCG.eps}
\label{fig_stability_RCG}}
\hfil
\subfloat[]{\includegraphics[width=2in]{stability_of_RRCG.eps}
\label{fig_stability_RRCG}}
\caption{Performance of (a) RGrad, (b) RCG and (c) RCG restarted under different SNR.}
\label{fig_stability}
\end{figure*}

\section{Proofs}\label{sec:proofs}
In this section, we prove the main results in Sec.~2. We first list several  technical lemmas which are of independent interest. The proofs of these lemmas are presented in Appendix~\ref{sec:proofs_of_lemmas}.
\subsection{Technical Lemmas}
\begin{lemma}[Bounds for  Projections]\label{lem:tech1}
Let $X_l=U_l\Sigma_l V_l^*$ be a rank $r$ matrix, and $T_l$ be the tangent space of the rank $r$ matrix manifold 
at $X_l$.  Let $X=U\Sigma V^*$ be another $r$ matrix, and $T$ be the corresponding tangent space. Then 
\begin{align}\label{eq:tech1}
\begin{split}
&\ln U_lU_l^*-UU^*\rn\leq\frac{\ln X_l-X\rn_F}{\sigmin{X}},\quad\ln V_lV_l^*-VV^*\rn\leq\frac{\ln X_l-X\rn_F}{\sigmin{X}},\\
&\ln U_lU_l^*-UU^*\rn_F\leq\frac{\sqrt{2}\ln X_l-X\rn_F}{\sigmin{X}},\quad\ln V_lV_l^*-VV^*\rn_F\leq\frac{\sqrt{2}\ln X_l-X\rn_F}{\sigmin{X}},\\
&\ln\lb \I-\P_{T_l}\rb X\rn_F\leq\frac{\ln X_l-X\rn_F^2}{\sigmin{X}},\quad\ln\P_{T_l}-\P_T\rn\leq\frac{2\ln X_l-X\rn_F}{\sigmin{X}}.
\end{split}
\end{align}
\end{lemma}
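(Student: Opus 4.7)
The plan is to reduce all four bounds to a single $\sin\Theta$-type estimate between the column spaces of $U$ and $U_l$ (and symmetrically for $V$, $V_l$). To this end I would introduce matrices $U_l^\perp,V_l^\perp$ with orthonormal columns spanning the orthogonal complements of the column spaces of $U_l,V_l$, so that $\I-U_lU_l^*=U_l^\perp(U_l^\perp)^*$ and $\I-V_lV_l^*=V_l^\perp(V_l^\perp)^*$, and set $A=(U_l^\perp)^*U$, $B=(V_l^\perp)^*V$, the ``sine'' matrices encoding the principal angles.

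For the spectral-norm bound on $U_lU_l^*-UU^*$, I would use $\lb\I-U_lU_l^*\rb X_l=0$ to obtain $\lb\I-U_lU_l^*\rb X=\lb\I-U_lU_l^*\rb\lb X-X_l\rb$. Since $\lb\I-U_lU_l^*\rb X=U_l^\perp A\Sigma V^*$ and $U_l^\perp,V$ have orthonormal columns, $\ln A\Sigma\rn=\ln\lb\I-U_lU_l^*\rb X\rn\leq\ln X-X_l\rn\leq\ln X-X_l\rn_F$, while $\sigmin{X}\ln A\rn\leq\ln A\Sigma\rn$ yields $\ln A\rn\leq\ln X_l-X\rn_F/\sigmin{X}$. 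Combined with the identity $\ln U_lU_l^*-UU^*\rn=\ln A\rn$ (maximum sine of the principal angles), this gives the first inequality. The second follows by the identical argument with the Frobenius norm, using $\ln A\Sigma\rn_F\leq\ln X-X_l\rn_F$, $\sigmin{X}\ln A\rn_F\leq\ln A\Sigma\rn_F$, and the standard identity $\ln U_lU_l^*-UU^*\rn_F^2=2\ln A\rn_F^2$.

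For the third inequality, I would use $\lb\I-\P_{T_l}\rb(Z)=\lb\I-U_lU_l^*\rb Z\lb\I-V_lV_l^*\rb$ applied to $X=U\Sigma V^*$ to get $\ln\lb\I-\P_{T_l}\rb X\rn_F=\ln A\Sigma B^*\rn_F$. The crucial step is to factor
\[
\ln A\Sigma B^*\rn_F\leq\ln A\rn\cdot\ln\Sigma B^*\rn_F,
\]
bound the first factor by $\ln X_l-X\rn_F/\sigmin{X}$ from the previous step, and then compute $\ln\Sigma B^*\rn_F=\ln U^*XV_l^\perp\rn_F=\ln XV_l^\perp\rn_F$; since $X_lV_l^\perp=0$, we have $XV_l^\perp=-(X_l-X)V_l^\perp$, whose Frobenius norm is at most $\ln X_l-X\rn_F$. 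Multiplying yields the quadratic bound.

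Finally, the bound on $\P_{T_l}-\P_T$ comes from the algebraic expansion
\[
\lb\P_{T_l}-\P_T\rb(Z)=\lb U_lU_l^*-UU^*\rb Z\lb\I-VV^*\rb+\lb\I-U_lU_l^*\rb Z\lb V_lV_l^*-VV^*\rb,
\]
followed by the triangle inequality and the spectral bounds of the first step applied to both $U_lU_l^*-UU^*$ and $V_lV_l^*-VV^*$. The main obstacle is the quadratic factor in the third step: a naive factoring $\ln A\Sigma B^*\rn_F\leq\ln A\rn\cdot\ln\Sigma\rn\cdot\ln B\rn_F$ introduces $\sigmax{X}$ and only yields a $\kappa$-dependent bound; one must instead keep $\Sigma$ attached to $B^*$ and recognize that $\Sigma B^*=U^*XV_l^\perp=-U^*(X_l-X)V_l^\perp$, which removes $\sigmax{X}$ entirely and delivers the clean $\ln X_l-X\rn_F^2/\sigmin{X}$ estimate.
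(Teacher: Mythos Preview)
Your proof is correct. For the last inequality $\ln\P_{T_l}-\P_T\rn\leq 2\ln X_l-X\rn_F/\sigmin{X}$, your algebraic decomposition
\[
\lb\P_{T_l}-\P_T\rb(Z)=\lb U_lU_l^*-UU^*\rb Z\lb I-VV^*\rb+\lb I-U_lU_l^*\rb Z\lb V_lV_l^*-VV^*\rb
\]
is exactly what the paper does. For the first five inequalities the paper gives no argument at all, citing \cite{CGIHT_dense} instead; your $\sin\Theta$ approach via $A=(U_l^\perp)^*U$, $B=(V_l^\perp)^*V$ and the identities $\ln U_lU_l^*-UU^*\rn=\ln A\rn$, $\ln U_lU_l^*-UU^*\rn_F^2=2\ln A\rn_F^2$ supplies a clean self-contained proof, and your handling of the quadratic bound---keeping $\Sigma$ attached to $B^*$ and recognizing $\Sigma B^*=U^*XV_l^\perp=-U^*(X_l-X)V_l^\perp$---is precisely the right move to avoid a spurious condition-number factor.
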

\begin{lemma}[Restricted Isometry Property in a Local Neighborhood]\label{lem:tech2}
Assume 
\begin{align*}
&\ln\P_\Omega\rn\leq\frac{8}{3}\beta\log(n),\quad\ln \P_T-p^{-1}\P_T\P_\Omega\P_T\rn\leq \varepsilon_0,\quad\mbox{and}\quad\frac{\donr{X_l-X}_F}{\sigmin{X}}\leq\frac{3p^{1/2}\varepsilon_0}{16\beta\log(n)(1+\varepsilon_0)}
\end{align*}
for some $0<\varepsilon_0<1$ and $\beta>1$.
Then 
\begin{align*}
&\ln \P_\Omega\P_{T_l}\rn\leq{\frac{8}{3}\beta\log(n)(1+\varepsilon_0)}p^{1/2},\numberthis\label{eq:tech2_eq1}\\
&\donr{\P_{T_l}-p^{-1}\P_{T_l}\P_\Omega\P_{T_l}}\leq 4\varepsilon_0\numberthis\label{eq:tech2_eq2}.
\end{align*}
\end{lemma}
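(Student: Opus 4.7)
The plan is to prove both inequalities by a perturbation argument that transfers the isometry and boundedness information from the reference tangent space $T$ (for which the hypotheses give us direct control) to the nearby tangent space $T_l$. The common device is Lemma~\ref{lem:tech1}, which, combined with the third hypothesis on $\|X_l-X\|_F/\sigma_{\min}(X)$, yields the small perturbation bound
\[
\|\P_{T_l}-\P_T\|\leq\frac{2\|X_l-X\|_F}{\sigma_{\min}(X)}\leq\frac{3p^{1/2}\varepsilon_0}{\tfrac{8}{3}\beta\log(n)(1+\varepsilon_0)}\cdot\tfrac{3}{8},
\]
which will feed all subsequent triangle-inequality estimates. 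For brevity I write $B:=\tfrac{8}{3}\beta\log(n)$.

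For \eqref{eq:tech2_eq1}, I would first bound $\|\P_\Omega \P_T\|$ by exploiting that $\P_\Omega$ is self-adjoint and positive semidefinite under the sampling-with-replacement model (its matrix representation is diagonal with nonnegative entries equal to the multiplicities). Writing $\|\P_\Omega \P_T\|^2=\|\P_T \P_\Omega^2 \P_T\|$ and using the operator inequality $\P_\Omega^2\preceq\|\P_\Omega\|\,\P_\Omega$, I get
\[
\|\P_\Omega \P_T\|^2\;\leq\;\|\P_\Omega\|\,\|\P_T\P_\Omega \P_T\|\;\leq\;B\cdot p(1+\varepsilon_0),
\]
where the last inequality uses the isometry hypothesis in the form $\|\P_T\P_\Omega \P_T\|\leq p(1+\varepsilon_0)$. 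I then apply the triangle inequality $\|\P_\Omega \P_{T_l}\|\leq\|\P_\Omega \P_T\|+\|\P_\Omega\|\,\|\P_{T_l}-\P_T\|$ and combine with the perturbation bound above and the hypothesis $\|\P_\Omega\|\leq B$. The first summand contributes $\sqrt{Bp(1+\varepsilon_0)}$ and the second contributes $\tfrac{3p^{1/2}\varepsilon_0}{8(1+\varepsilon_0)}$; since $B(1+\varepsilon_0)\geq 1$ (a consequence of $n\geq 9$, $\beta>1$), the dominant term $\sqrt{B(1+\varepsilon_0)}$ is bounded by $B(1+\varepsilon_0)$ and the sum fits inside $B(1+\varepsilon_0)p^{1/2}$.

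For \eqref{eq:tech2_eq2}, I would use the telescoping decomposition
\[
\P_{T_l}-p^{-1}\P_{T_l}\P_\Omega \P_{T_l}=(\P_{T_l}-\P_T)+\bigl(\P_T-p^{-1}\P_T\P_\Omega \P_T\bigr)-p^{-1}\bigl(\P_{T_l}\P_\Omega \P_{T_l}-\P_T\P_\Omega \P_T\bigr).
\]
The second term has norm $\leq\varepsilon_0$ by hypothesis and the first has norm $\leq 2\|X_l-X\|_F/\sigma_{\min}(X)$ by Lemma~\ref{lem:tech1}, which by the third hypothesis is much smaller than $\varepsilon_0$. For the third term I split
\[
\P_{T_l}\P_\Omega \P_{T_l}-\P_T\P_\Omega \P_T=(\P_{T_l}-\P_T)\P_\Omega \P_{T_l}+\P_T\P_\Omega(\P_{T_l}-\P_T),
\]
take norms, and use the freshly established bound \eqref{eq:tech2_eq1} together with the intermediate estimate $\|\P_\Omega \P_T\|\leq\sqrt{Bp(1+\varepsilon_0)}\leq Bp^{1/2}(1+\varepsilon_0)$ from the first part (and the self-adjointness identity $\|\P_T\P_\Omega\|=\|\P_\Omega \P_T\|$). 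This contributes at most $p^{-1}\cdot 2\|\P_{T_l}-\P_T\|\cdot 2Bp^{1/2}(1+\varepsilon_0)$, which after inserting the third hypothesis collapses to $\tfrac{3\varepsilon_0}{4}$, independent of $p$ and $B$.

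The main obstacle is the constant bookkeeping: the cancellation between the $p^{-1}$ prefactor in the last term and the $p^{1/2}\varepsilon_0/(B(1+\varepsilon_0))$ scaling in the perturbation bound is exactly what makes the third term dimensionless and small, so the specific form of hypothesis \eqref{eq:rgrad_det_cond3} has to be used verbatim — any looser assumption on $\|X_l-X\|_F/\sigma_{\min}(X)$ would fail to absorb the $B\cdot p^{-1/2}$ factor. Once the three contributions are summed ($\leq\varepsilon_0+\tfrac{3\varepsilon_0}{4}+\text{small}$), the $4\varepsilon_0$ bound follows with ample margin.
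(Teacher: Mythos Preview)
Your approach is essentially identical to the paper's: both first bound $\|\P_\Omega\P_T\|$ via $\|\P_\Omega\P_T\|^2\leq\|\P_\Omega\|\cdot\|\P_T\P_\Omega\P_T\|$, pass to $T_l$ by the triangle inequality $\|\P_\Omega\P_{T_l}\|\leq\|\P_\Omega\P_T\|+\|\P_\Omega\|\,\|\P_{T_l}-\P_T\|$, and then for \eqref{eq:tech2_eq2} use exactly the same telescoping split of $\P_{T_l}\P_\Omega\P_{T_l}-\P_T\P_\Omega\P_T$ together with Lemma~\ref{lem:tech1}. A couple of your displayed constants are slightly off (the extra $\tfrac{3}{8}$ factor in the perturbation bound, and the claim that the third contribution collapses to $\tfrac{3\varepsilon_0}{4}$ --- it is actually $2\varepsilon_0$), but these slips are harmless and the final $4\varepsilon_0$ bound still follows exactly as in the paper.
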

\begin{lemma}[Asymmetric Restricted Isometry Property]\label{lem:tech3}
Let $X_l=U_l\Sigma_lV_l^*$ and $X=U\Sigma V^*$ be two fixed rank $r$ matrices. Assume
\begin{align*}
&\ln\P_{U_l}\lb e_i\rb\rn\leq\sqrt{\frac{\mu r}{n}}\quad\mbox{and}\quad\ln\P_{V_l}\lb e_j\rb\rn\leq\sqrt{\frac{\mu r}{n}},\\
&\ln\P_U\lb e_i\rb\rn\leq\sqrt{\frac{\mu r}{n}}\quad\mbox{and}\quad\ln\P_V\lb e_j\rb\rn\leq\sqrt{\frac{\mu r}{n}}
\end{align*}
for $1\leq i, j\leq n$. Suppose $\Omega$ with $|\Omega|=m$ is a set of indices sampled independently and uniformly with replacement. Then for any $\beta>1$, 
\begin{align*}
&\ln \frac{n^2}{m}\P_{T_l}\P_{\Omega}\lb\P_U-\P_{U_l}\rb-\P_{T_l}\lb\P_U-\P_{U_l}\rb\rn\leq\sqrt{\frac{48\beta \mu nr \log(n)}{m}}
\end{align*}
with probability at least $1-2n^{2-2\beta}$ provided $m\geq 15\beta\mu nr\log(n)$.
\end{lemma}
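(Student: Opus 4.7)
The plan is to apply the operator Bernstein inequality to a sum of $m$ i.i.d.\ mean-zero random linear operators on $\R^{n\times n}$. Writing $\mathcal{R}_{(i,j)}(M) := M_{ij}e_ie_j^*$ for the rank-one sampler at index $(i,j)$, the sampling-with-replacement model gives $\P_\Omega = \sum_{k=1}^m \mathcal{R}_{(i_k,j_k)}$ with the $(i_k,j_k)$ drawn i.i.d.\ uniformly on $\{1,\ldots,n\}\times\{1,\ldots,n\}$. Since $\E[\mathcal{R}_{(i_k,j_k)}] = n^{-2}\I$, the operator to be controlled decomposes as $\sum_{k=1}^m S_k$ with
\begin{align*}
S_k = \frac{n^2}{m}\P_{T_l}\mathcal{R}_{(i_k,j_k)}(\P_U - \P_{U_l}) - \frac{1}{m}\P_{T_l}(\P_U - \P_{U_l}),\qquad \E[S_k]=0.
\end{align*}

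Next I would establish a uniform operator-norm bound and a variance proxy for the $S_k$. For the uniform bound I would expand
\begin{align*}
\P_{T_l}\mathcal{R}_{(i,j)}(\P_U - \P_{U_l})(M) = \bigl(e_i^*(UU^* - U_lU_l^*)Me_j\bigr)\cdot\P_{T_l}(e_ie_j^*)
\end{align*}
and apply Cauchy--Schwarz together with the joint incoherence of the pairs $(U,U_l)$ and $(V,V_l)$: this gives $\|\P_{T_l}(e_ie_j^*)\|_F\leq\sqrt{2\mu r/n}$ and $\|(UU^* - U_lU_l^*)e_i\|\leq 2\sqrt{\mu r/n}$, hence $\|S_k\|\leq C\mu nr/m$. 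For the variance I would exploit the algebraic identities
\begin{align*}
\mathcal{R}_{(i,j)}\P_{T_l}\mathcal{R}_{(i,j)} &= \|\P_{T_l}(e_ie_j^*)\|_F^2\cdot\mathcal{R}_{(i,j)},\\
\mathcal{R}_{(i,j)}(\P_U - \P_{U_l})^2\mathcal{R}_{(i,j)} &= \|(UU^* - U_lU_l^*)e_i\|^2\cdot\mathcal{R}_{(i,j)},
\end{align*}
each of which collapses an inner sandwich into a scalar multiple of $\mathcal{R}_{(i,j)}$. Setting $W_k = (n^2/m)\P_{T_l}\mathcal{R}_{(i_k,j_k)}(\P_U - \P_{U_l})$ and inserting the incoherence upper bounds $\leq 2\mu r/n$ and $\leq 4\mu r/n$ into the resulting PSD dominations, then taking expectation via $\E[\mathcal{R}_{(i_k,j_k)}] = n^{-2}\I$ and using $\E[S_kS_k^*]\preceq\E[W_kW_k^*]$, I expect to obtain $\sigma^2 := \max\bigl(\|\sum_k\E[S_kS_k^*]\|, \|\sum_k\E[S_k^*S_k]\|\bigr)\leq C\mu nr/m$.

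Finally I would feed $L\leq C\mu nr/m$ and $\sigma^2\leq C\mu nr/m$ into the operator Bernstein inequality for sums of i.i.d.\ random linear operators on $\R^{n\times n}$ (dimension factor of order $n^2$). Choosing $t = \sqrt{48\beta\mu nr\log(n)/m}$ and invoking the hypothesis $m\geq 15\beta\mu nr\log(n)$ ensures that the sub-exponential term $Lt/3$ is absorbed by $\sigma^2$, leaving an exponent of $-c\beta\log(n)$; after combining with the dimension pre-factor this yields the claimed failure probability $2n^{2-2\beta}$ upon routine calibration of the numerical constants.

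The main obstacle I anticipate is the bookkeeping in the variance step. The asymmetry of $\P_{T_l}\P_\Omega(\P_U - \P_{U_l})$ forces a separate analysis of $W_kW_k^*$ and $W_k^*W_k$, and in each sandwich one must match the correct incoherence quantity ($\|\P_{T_l}(e_ie_j^*)\|_F^2$ versus $\|(UU^* - U_lU_l^*)e_i\|^2$) to the side on which $\P_{T_l}$ or $(\P_U - \P_{U_l})^2$ is being squeezed between the two copies of $\mathcal{R}_{(i_k,j_k)}$. Once the two cases are dispatched in parallel, plugging into Bernstein and tuning the numerical constants is mechanical.
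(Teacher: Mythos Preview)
Your proposal is correct and follows essentially the same route as the paper. The paper writes the operator as a sum of rank-one operators $\T_{i_k,j_k}=\P_{T_l}(e_{i_k}e_{j_k}^*)\otimes(\P_U-\P_{U_l})(e_{i_k}e_{j_k}^*)$, which is exactly your $\P_{T_l}\mathcal{R}_{(i_k,j_k)}(\P_U-\P_{U_l})$ up to the $n^2/m$ scaling you apply upfront; its variance bounds use the identities $\E[(\P_U-\P_{U_l})(e_{i_k}e_{j_k}^*)\otimes(\P_U-\P_{U_l})(e_{i_k}e_{j_k}^*)]=n^{-2}(\P_U-\P_{U_l})^2$ and $\E[\P_{T_l}(e_{i_k}e_{j_k}^*)\otimes\P_{T_l}(e_{i_k}e_{j_k}^*)]=n^{-2}\P_{T_l}$, which unpack to your sandwich formulas, and then plugs into the Noncommutative Bernstein inequality in the same way.
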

\begin{lemma}[A Recursive Relationship]\label{lem:tech4}
Let $\rho_1,~\rho_2$ and $\gamma$ be positive constants satisfying $\rho_2\geq \rho_1$. Define
\begin{align*}
\tau_1=\rho_1+\gamma,~\tau_2=(\rho_2-\rho_1)\gamma, ~\mbox{and}~\nu=\frac{1}{2}\lb\tau_1+\sqrt{\tau_1^2+4\tau_2}\rb.
\end{align*}
Let $\lcb c_l\rcb_{l\geq 0}$ be a non-negative sequence satisfying $c_1\leq \nu c_0$ and 
\begin{align*}
c_{l+1}\leq\rho_1 c_l+\rho_2\sum_{j=0}^{l-1}\gamma^{l-j}c_j,\quad\forall~ l\geq 1. 
\end{align*}
Then if $\tau_1+\tau_2<1$, we have $\nu<1$ and 
\begin{align*}
c_{l+1}\leq \nu^{l+1} c_0.\numberthis\label{eq:tech4_eq1}
\end{align*}
\end{lemma}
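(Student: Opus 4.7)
The plan is strong induction on $l$, structured around the fact that $\nu$ is the positive root of the characteristic quadratic $x^2 - \tau_1 x - \tau_2 = 0$. The identity $\nu^2 = \tau_1 \nu + \tau_2$ is precisely what will close the inductive step.

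First I would establish two preliminary inequalities: $\nu<1$ and $\nu>\gamma$. For $\nu<1$, consider $p(x)=x^2-\tau_1 x-\tau_2$: we have $p(0)=-\tau_2\leq 0$ and $p(1)=1-(\tau_1+\tau_2)>0$ by hypothesis, so the larger root $\nu$ must lie in $(0,1)$. For $\nu>\gamma$, the explicit formula gives $\nu=\tfrac{1}{2}\bigl(\tau_1+\sqrt{\tau_1^2+4\tau_2}\bigr)\geq\tau_1=\rho_1+\gamma>\gamma$, using $\rho_1>0$ and $\tau_2\geq 0$. The strict inequality $\nu>\gamma$ is what will let me sum an infinite geometric series in the induction.

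The base case $c_1\leq\nu c_0$ is supplied as a hypothesis. For the inductive step, assume $c_j\leq\nu^j c_0$ for every $0\leq j\leq l$. Substituting these bounds into the recursion and factoring out $\nu^l c_0$,
\[
c_{l+1}\leq\nu^l c_0\Bigl(\rho_1+\rho_2\sum_{s=1}^{l}(\gamma/\nu)^s\Bigr),
\]
after the reindexing $s=l-j$. Since $0<\gamma/\nu<1$, the truncated geometric sum is dominated by $\gamma/(\nu-\gamma)$. The desired conclusion $c_{l+1}\leq\nu^{l+1}c_0$ then reduces to the scalar inequality $\rho_1+\rho_2\gamma/(\nu-\gamma)\leq\nu$; multiplying through by the positive factor $\nu-\gamma$ rewrites this as $\nu^2\geq(\rho_1+\gamma)\nu+(\rho_2-\rho_1)\gamma=\tau_1\nu+\tau_2$, which is exactly the defining relation for $\nu$ (and in fact holds with equality). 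This closes the induction.

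The main obstacle, such as it is, lies in the recognition step: observing that $\nu$ has been engineered precisely so that the geometric-tail bound matches the required growth rate. Once the characteristic identity $\nu^2=\tau_1\nu+\tau_2$ is invoked, the rest is straightforward algebra, and non-negativity of $\{c_l\}$ together with $\rho_1,\rho_2,\gamma>0$ ensures no sign issues arise when combining the induction hypotheses.
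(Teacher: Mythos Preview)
Your argument is correct and follows essentially the same route as the paper: strong induction, substitution of the inductive bounds into the recursion, domination of the truncated geometric sum by its infinite tail, and reduction to the quadratic identity $\nu^2=\tau_1\nu+\tau_2$. Your version is in fact slightly more careful than the paper's, since you explicitly verify both $\nu<1$ and $\nu>\gamma$; the paper's proof uses the geometric bound $\sum_{s\geq 1}(\gamma/\nu)^s=\gamma/(\nu-\gamma)$ without pausing to check that $\gamma/\nu<1$.
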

\begin{lemma}[Chordal and Projection Distances]\label{lem:tech5}
Let $U_l,~U\in\R^{n\times r}$ be two orthogonal matrices. Then there exists a $r\times r$ unitary matrix $Q$ such that
\begin{align*}
\ln U_l-UQ\rn_F\leq\ln U_lU_l^*-UU^*\rn_F.
\end{align*}
\end{lemma}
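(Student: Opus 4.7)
The plan is to exploit the SVD of the cross-Gram matrix $U^{*}U_{l}$ to simultaneously diagonalize the quantities $\|U_{l}-UQ\|_{F}^{2}$ and $\|U_{l}U_{l}^{*}-UU^{*}\|_{F}^{2}$ in terms of the principal angles between $\mathrm{range}(U)$ and $\mathrm{range}(U_{l})$, and then compare them term by term.

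Concretely, first I would let $U^{*}U_{l}=A\Sigma B^{*}$ be the SVD, where $A,B\in\R^{r\times r}$ are orthogonal and $\Sigma=\mathrm{diag}(\sigma_{1},\dots,\sigma_{r})$. Since $U$ and $U_{l}$ have orthonormal columns, the singular values of $U^{*}U_{l}$ are the cosines of the principal angles between the two $r$-dimensional subspaces, so $\sigma_{i}\in[0,1]$; this bound is the only non-cosmetic input to the argument. I would then \emph{choose} the unitary $Q=AB^{*}$ (this is precisely the orthogonal Procrustes optimizer).

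With this $Q$ in hand, I would carry out two short expansions. For the chordal side,
\begin{align*}
\|U_{l}-UQ\|_{F}^{2}
&=\|U_{l}\|_{F}^{2}+\|UQ\|_{F}^{2}-2\,\mathrm{tr}(Q^{*}U^{*}U_{l})\\
&=2r-2\,\mathrm{tr}(BA^{*}\,A\Sigma B^{*})=2r-2\,\mathrm{tr}(\Sigma)=2\sum_{i=1}^{r}(1-\sigma_{i}).
\end{align*}
For the projection side,
\begin{align*}
\|U_{l}U_{l}^{*}-UU^{*}\|_{F}^{2}
&=2r-2\,\mathrm{tr}(U^{*}U_{l}U_{l}^{*}U)\\
&=2r-2\,\mathrm{tr}(\Sigma^{2})=2\sum_{i=1}^{r}(1-\sigma_{i}^{2})=2\sum_{i=1}^{r}(1-\sigma_{i})(1+\sigma_{i}).
\end{align*}

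Finally I would conclude by comparing the two sums term by term: because $\sigma_{i}\in[0,1]$, each factor $1+\sigma_{i}\geq 1$, so
\[
\|U_{l}U_{l}^{*}-UU^{*}\|_{F}^{2}=2\sum_{i=1}^{r}(1-\sigma_{i})(1+\sigma_{i})\;\geq\;2\sum_{i=1}^{r}(1-\sigma_{i})=\|U_{l}-UQ\|_{F}^{2},
\]
and taking square roots gives the claim. There is no real obstacle here; the only point that requires a sentence of justification is why $\sigma_{i}\le 1$ (equivalently, $\|U^{*}U_{l}\|\leq\|U\|\|U_{l}\|=1$), which follows immediately from $U$ and $U_{l}$ having orthonormal columns. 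Everything else is bookkeeping with traces.
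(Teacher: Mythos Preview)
Your proof is correct and follows essentially the same approach as the paper's: both take the SVD of $U^{*}U_{l}$, choose $Q$ to be the product of the left and right singular-vector matrices (the Procrustes alignment), and reduce the comparison to $\sum_i \sigma_i \geq \sum_i \sigma_i^{2}$ using $\sigma_i\in[0,1]$. Your write-up is slightly more explicit---factoring $1-\sigma_i^{2}=(1-\sigma_i)(1+\sigma_i)$ to make the termwise inequality transparent---but the argument is the same.
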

\subsection{Proof of Theorem~\ref{thm:rgrad_det}}\label{subsec:proof_rgrad_det}
We start with a  lemma which bounds the search stepsize $\alpha_l$ in Alg.~\ref{alg:rgrad}. 
\begin{lemma}\label{lem:rgrad_alpha}
Assume $\ln\P_{T_l}-p^{-1}\P_{T_l}\P_\Omega\P_{T_l}\rn\leq 4\vep_0$. Then the stepsize $\alpha_l$ in Alg.~\ref{alg:rgrad} can be bounded as 
\begin{align*}
\frac{1}{(1+4\vep_0)p}\leq\alpha_l=\frac{\ln\P_{T_l}\lb G_l\rb\rn_F^2}{\la \P_{T_l}\lb G_l\rb, \P_\Omega\P_{T_l}\lb G_l\rb\ra}\leq\frac{1}{(1-4\vep_0)p}.
\end{align*}
\end{lemma}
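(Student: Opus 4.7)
The plan is a short self-contained calculation that uses only the hypothesis and the self-adjointness of $\mathcal{P}_{T_l}$. Set $Y:=\mathcal{P}_{T_l}(G_l)\in T_l$, so in particular $\mathcal{P}_{T_l}(Y)=Y$. Because $\mathcal{P}_{T_l}$ is an orthogonal projection (hence self-adjoint), the denominator can be rewritten as
\begin{align*}
\la \mathcal{P}_{T_l}(G_l),\mathcal{P}_\Omega \mathcal{P}_{T_l}(G_l)\ra
= \la Y,\mathcal{P}_\Omega Y\ra
= \la Y,\mathcal{P}_{T_l}\mathcal{P}_\Omega\mathcal{P}_{T_l}\,Y\ra,
\end{align*}
i.e.\ the quadratic form of the operator $\mathcal{P}_{T_l}\mathcal{P}_\Omega\mathcal{P}_{T_l}$ on a vector in $T_l$.

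Next I would invoke the hypothesis $\|\mathcal{P}_{T_l}-p^{-1}\mathcal{P}_{T_l}\mathcal{P}_\Omega\mathcal{P}_{T_l}\|\le 4\vep_0$, which, after multiplying by $p$, gives $\|p\mathcal{P}_{T_l}-\mathcal{P}_{T_l}\mathcal{P}_\Omega\mathcal{P}_{T_l}\|\le 4\vep_0 p$. Evaluating the associated quadratic form on $Y$ yields
\begin{align*}
\bigl| p\|Y\|_F^2 - \la Y,\mathcal{P}_{T_l}\mathcal{P}_\Omega\mathcal{P}_{T_l}\,Y\ra\bigr|
\le 4\vep_0 p\,\|Y\|_F^2,
\end{align*}
since $\la Y,\mathcal{P}_{T_l}Y\ra = \|Y\|_F^2$ for $Y\in T_l$.

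Rearranging produces the two-sided bound
\begin{align*}
(1-4\vep_0)p\,\|Y\|_F^2 \;\le\; \la Y,\mathcal{P}_\Omega Y\ra \;\le\; (1+4\vep_0)p\,\|Y\|_F^2,
\end{align*}
and taking reciprocals of the (strictly positive, for $\vep_0<1/4$) middle quantity divided into $\|Y\|_F^2$ yields exactly the claimed sandwich for $\alpha_l$. There is no real obstacle here; the only thing to be careful about is that the denominator is positive, which is ensured by $4\vep_0<1$ (implicitly assumed in the theorems, where $\vep_0$ is a small numerical constant). I would simply note this at the beginning of the proof and present the three displays above.
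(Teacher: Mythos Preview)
Your proposal is correct and takes essentially the same approach as the paper: both arguments rewrite the denominator as $\langle Y,\mathcal{P}_{T_l}\mathcal{P}_\Omega\mathcal{P}_{T_l}Y\rangle$ via self-adjointness of $\mathcal{P}_{T_l}$ and then use the hypothesis to bound this quadratic form above and below by $(1\pm4\vep_0)p\|Y\|_F^2$. The only cosmetic difference is that the paper treats the upper and lower bounds separately (first bounding $\|\mathcal{P}_{T_l}\mathcal{P}_\Omega\mathcal{P}_{T_l}\|\le(1+4\vep_0)p$, then decomposing $\|\mathcal{P}_{T_l}(G_l)\|_F^2$), whereas you obtain both at once from the single inequality $|p\|Y\|_F^2-\langle Y,\mathcal{P}_{T_l}\mathcal{P}_\Omega\mathcal{P}_{T_l}Y\rangle|\le 4\vep_0 p\|Y\|_F^2$; the content is identical.
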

\begin{proof}
First the assumption $\ln\P_{T_l}-p^{-1}\P_{T_l}\P_\Omega\P_{T_l}\rn\leq 4\vep_0$ implies
$$
\ln \P_{T_l}\P_\Omega\P_{T_l}\rn\leq p\ln\P_{T_l}-p^{-1}\P_{T_l}\P_\Omega\P_{T_l}\rn + p\ln\P_{T_l}\rn
\leq(1+4\vep_0)p.
$$
On one hand, 
\begin{align*}
&\la \P_{T_l}\lb G_l\rb, \P_\Omega\P_{T_l}\lb G_l\rb\ra= \la \P_{T_l}\lb G_l\rb, \P_{T_l}\P_\Omega\P_{T_l}\lb G_l\rb\ra\leq (1+4\vep_0)p\ln \P_{T_l}\lb G_l\rb\rn_F^2,
\end{align*}
which implies the lower bound of $\alpha_l$. 

On the other hand,
\begin{align*}
\ln\P_{T_l}(G_l)\rn_F^2&=\la\P_{T_l}(G_l) , \P_{T_l}(G_l)-p^{-1}\P_{T_l}\P_\Omega\P_{T_l}(G_l)\ra+\la \P_{T_l}(G_l), p^{-1}\P_{T_l}\P_\Omega\P_{T_l}(G_l)\ra\\
&{\leq 4\vep_0\ln \P_{T_l}(G_l)\rn_F^2+p^{-1}\la \P_{T_l}(G_l), \P_\Omega\P_{T_l}(G_l)\ra},\numberthis\label{eq:PoPTG_lowerbound}
\end{align*}
{from which we can obtain the upper bound of $\alpha_l$ by multiplying $p$ on both sides of the above inequality followed by the rearrangement.}
\end{proof}

The next lemma bounds the local isometry of $\P_\Omega$ when the adaptive stepsize $\alpha_l$ is used. 
\begin{lemma}\label{lem:rgrad_iso2}
Assume $\ln\P_{T_l}-p^{-1}\P_{T_l}\P_\Omega\P_{T_l}\rn\leq 4\vep_0$ and $\alpha_l$ can be bounded as in Lem.~\ref{lem:rgrad_alpha}. Then the spectral norm of $\P_{T_l}-\alpha_l\P_{T_l}\P_\Omega\P_{T_l}$ can be bounded as 
\begin{align*}
\ln \P_{T_l}-\alpha_l\P_{T_l}\P_\Omega\P_{T_l}\rn\leq\frac{8\vep_0}{1-4\vep_0}.
\end{align*}
\end{lemma}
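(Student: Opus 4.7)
The plan is to decompose $\P_{T_l}-\alpha_l\P_{T_l}\P_\Omega\P_{T_l}$ into the ``perfect isometry'' deviation plus an error coming from using $\alpha_l$ rather than the ideal scaling $p^{-1}$. Concretely, I would write
\begin{align*}
\P_{T_l}-\alpha_l\P_{T_l}\P_\Omega\P_{T_l} = \lb\P_{T_l}-p^{-1}\P_{T_l}\P_\Omega\P_{T_l}\rb+\lb p^{-1}-\alpha_l\rb\P_{T_l}\P_\Omega\P_{T_l},
\end{align*}
and then apply the triangle inequality for the spectral norm. The first term is bounded by $4\vep_0$ directly from the hypothesis.

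For the second term, I would control the two factors separately. On one hand, as already observed inside the proof of Lem.~\ref{lem:rgrad_alpha}, the assumption $\ln\P_{T_l}-p^{-1}\P_{T_l}\P_\Omega\P_{T_l}\rn\leq 4\vep_0$ combined with $\ln\P_{T_l}\rn\leq 1$ yields $\ln\P_{T_l}\P_\Omega\P_{T_l}\rn\leq(1+4\vep_0)p$. On the other hand, Lem.~\ref{lem:rgrad_alpha} places $\alpha_l$ inside the interval $\lsb\frac{1}{(1+4\vep_0)p},\frac{1}{(1-4\vep_0)p}\rsb$, and a short calculation shows
\begin{align*}
\lab p^{-1}-\alpha_l\rab\leq\max\lcb\frac{4\vep_0}{(1+4\vep_0)p},\frac{4\vep_0}{(1-4\vep_0)p}\rcb = \frac{4\vep_0}{(1-4\vep_0)p}.
\end{align*}
Multiplying these two bounds gives $\lab p^{-1}-\alpha_l\rab\ln\P_{T_l}\P_\Omega\P_{T_l}\rn\leq\frac{4\vep_0(1+4\vep_0)}{1-4\vep_0}$.

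Summing the two contributions,
\begin{align*}
\ln \P_{T_l}-\alpha_l\P_{T_l}\P_\Omega\P_{T_l}\rn \leq 4\vep_0 + \frac{4\vep_0(1+4\vep_0)}{1-4\vep_0} = \frac{4\vep_0(1-4\vep_0)+4\vep_0(1+4\vep_0)}{1-4\vep_0} = \frac{8\vep_0}{1-4\vep_0},
\end{align*}
which is exactly the claimed estimate. There is no genuine obstacle here: the proof is essentially an algebraic unpacking of the already-established bounds on $\alpha_l$ and on $\ln\P_{T_l}\P_\Omega\P_{T_l}\rn$. The only subtle point is to notice that the worst case of $|p^{-1}-\alpha_l|$ comes from the $\frac{1}{(1-4\vep_0)p}$ side of the interval, which is what makes the denominator $1-4\vep_0$ (rather than $1+4\vep_0$) appear in the final answer.
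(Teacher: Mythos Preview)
Your proof is correct and follows exactly the same approach as the paper's own proof: decompose via $\P_{T_l}-\alpha_l\P_{T_l}\P_\Omega\P_{T_l}=(\P_{T_l}-p^{-1}\P_{T_l}\P_\Omega\P_{T_l})+(p^{-1}-\alpha_l)\P_{T_l}\P_\Omega\P_{T_l}$, bound $|\alpha_l-p^{-1}|\leq\frac{4\vep_0}{(1-4\vep_0)p}$ and $\ln\P_{T_l}\P_\Omega\P_{T_l}\rn\leq(1+4\vep_0)p$, and then simplify to $\frac{8\vep_0}{1-4\vep_0}$.
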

\begin{proof}
The lemma follows from direct calculations 
\begin{align*}
\ln \P_{T_l}-\alpha_l\P_{T_l}\P_\Omega\P_{T_l}\rn&\leq \ln \P_{T_l}-p^{-1}\P_{T_l}\P_\Omega\P_{T_l}\rn+|\alpha_l-p^{-1}|\ln \P_{T_l}\P_\Omega\P_{T_l}\rn\\
&{\leq 4\vep_0+\frac{4\vep_0}{(1-4\vep_0)p}(1+4\vep_0)p}\\
&=\frac{8\vep_0}{1-4\vep_0},
\end{align*}
where the second inequality follows from the assumption and Lem.~\ref{lem:rgrad_alpha}.
\end{proof}

\begin{proof}[Proof of Theorem~\ref{thm:rgrad_det}] 
We first assume that in the $l$-th iteration $X_l$ satisfies
\begin{align*}
\frac{\ln X_l-X\rn_F}{\sigmin{X}}\leq\frac{3p^{1/2}\vep_0}{16\beta\log(n)(1+\vep_0)}\numberthis\label{eq:rgrad_extra_assump}
\end{align*}
 So together with the first two assumptions {\eqref{eq:rgrad_det_cond1} and \eqref{eq:rgrad_det_cond2}} of Thm.~\ref{thm:rgrad_det}, the application of Lem.~\ref{lem:tech2} gives
\begin{align*}
\ln\P_\Omega\P_{T_l}\rn\leq\frac{8}{3}\beta\log(n)(1+\vep_0)p^{1/2}\numberthis\label{eq:rgrad_popt_bd}
\end{align*}
and
\begin{align*}
\ln\P_{T_l}-p^{-1}\P_{T_l}\P_\Omega\P_{T_l}\rn\leq 4\vep_0,\numberthis\label{eq:rgrad_iso1}
\end{align*}
which means the assumptions of Lems.~\ref{lem:rgrad_alpha} and \ref{lem:rgrad_iso2} are satisfied.

Recall that $W_l= X_l+\alpha_l\P_{T_l}(G_l)$ in Alg.~\ref{alg:rgrad}. The proof 
begins with the following inequality 
\begin{align*}
\ln X_{l+1}-X\rn_F&\leq \ln X_{l+1}-W_l\rn_F+\ln W_l-X\rn_F\leq 2\ln W_l-X\rn_F,
\end{align*}
where the second inequality follows from the fact that $X_{l+1}$ is the best
rank $r$ approximation of $W_l$. 
Plugging in  $W_l= X_l+\alpha_l\P_{T_l}(G_l)$ gives
\begin{align*}
\ln X_{l+1}-X\rn_F&\leq 2\ln X_l+\alpha_l\P_{T_l}(G_l)-X\rn_F\\
&=2\ln X_l-X-\alpha_l\P_{T_l}\P_\Omega(X_l-X)\rn_F\\
&\leq2\ln (\P_{T_l}-\alpha_l\P_{T_l}\P_\Omega\P_{T_l})(X_l-X) \rn_F\\
&\quad+2\ln (\I-\P_{T_l})(X_l-X)\rn_F\\
&\quad+2|\alpha_l|\ln \P_{T_l}\P_\Omega(\I-\P_{T_l})(X_l-X)\rn_F\\
&:=I_1+I_2+I_3.\numberthis\label{eq:rgrad_recursive}
\end{align*}
First we have 
\begin{align*}
I_1\leq\frac{16\vep_0}{1-4\vep_0}\ln X_l-X\rn_F
\end{align*}
and 
\begin{align*}
I_2&\leq\frac{2\ln X_l-X\rn_F^2}{\sigmin{X}}\leq\frac{3p^{1/2}\vep_0}{8\beta\log(n)(1+\vep_0)}\ln X_l-X\rn_F\leq\frac{\vep_0}{1-4\vep_0}\ln X_l-X\rn_F
\end{align*}
which follows from Lems.~\ref{lem:rgrad_iso2} and \ref{lem:tech1} respectively.
The third term $I_3$ can be bounded as follows.
\begin{align*}
I_3&\leq 2|\alpha_l|\ln\P_\Omega\P_{T_l}\rn\ln (\I-\P_{T_l})(X)\rn_F\\
&\leq\frac{2}{(1-4\vep_0)p}\frac{8}{3}\beta\log(n)(1+\vep_0)p^{1/2}\frac{\ln X_l-X\rn_F^2}{\sigmin{X}}\\
&\leq\frac{\vep_0}{1-4\vep_0}\ln X_l-X\rn_F,
\end{align*}
where the second inequality follows from \eqref{eq:rgrad_popt_bd}, and Lems.~\ref{lem:rgrad_alpha} and \ref{lem:tech1}, and 
the third inequality follows from \eqref{eq:rgrad_extra_assump}.

So inserting the bounds for $I_1$, $I_2$ and $I_3$ into \eqref{eq:rgrad_recursive} gives 
\begin{align*}
\ln X_{l+1}-X\rn_F\leq \nu_g\ln X_l-X\rn_F,\numberthis\label{eq:rgrad_contract}
\end{align*}
where 
\begin{align*}
\nu_g=\frac{18\vep_0}{1-4\vep_0}<1
\end{align*}
by the assumption of Thm.~\ref{thm:rgrad_det}. 

{It only remains to verify \eqref{eq:rgrad_extra_assump}. By the  assumption of Thm.~\ref{thm:rgrad_det}, \eqref{eq:rgrad_extra_assump} is valid for $l=0$. Since $\ln X_l-X\rn_F$ is a contractive sequence following from \eqref{eq:rgrad_contract}, \eqref{eq:rgrad_extra_assump} is valid for all $l\geq 0$ by induction.}
\end{proof}
\subsection{Proof of Theorem~\ref{thm:rcg_det}}\label{subsec:proof_rcg_det}
We first estimate $\alpha_l$ and $\beta_l$ in Alg.~\ref{alg:rcg} with the restarting conditions in \eqref{eq:restart_cond}.
\begin{lemma}\label{lem:rcg_alpha_beta}
Assume $\ln\P_{T_l}-p^{-1}\P_{T_l}\P_\Omega\P_{T_l}\rn\leq 4\vep_0$.  When restarting occurs, $\beta_l=0$ and 
$\alpha_l$ can be bounded as in Lem.~\ref{lem:rgrad_alpha}; otherwise
\begin{align*}
|\beta_l|\leq\vep_\beta\quad\mbox{and}\quad |\alpha_l\cdot p-1|\leq\vep_\alpha,
\end{align*}
where 
\begin{align*}
\vep_\beta=\frac{4\kappa_2\vep_0}{1-4\vep_0}+\frac{\kappa_1\kappa_2}{1-4\vep_0},~\vep_\alpha=\frac{4\vep_0}{(1-4\vep_0)-\kappa_1\lb1+4\vep_0\rb}.
\end{align*}
\end{lemma}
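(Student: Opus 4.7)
\textit{Proof plan.} I would split on whether restarting is triggered at iteration $l$.

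\textit{Case 1 (restart).} By construction, if either inequality in \eqref{eq:restart_cond} fails then the algorithm resets $\beta_l = 0$, so $P_l = \P_{T_l}(G_l)$ and the $\alpha_l$-formula in Alg.~\ref{alg:rcg} collapses to the one in Alg.~\ref{alg:rgrad}. The claim then follows verbatim from Lem.~\ref{lem:rgrad_alpha} under the hypothesis $\ln \P_{T_l}-p^{-1}\P_{T_l}\P_\Omega\P_{T_l}\rn\leq 4\vep_0$.

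\textit{Case 2 (no restart).} Both inequalities in \eqref{eq:restart_cond} hold. I would first bound $\beta_l$ and then use that bound in estimating $\alpha_l$. The central analytic tool is the decomposition $\P_{T_l}\P_\Omega\P_{T_l} = p\P_{T_l} + \mathcal{E}_l$ with $\|\mathcal{E}_l\|\le 4p\vep_0$, which lets me replace every $\P_\Omega$-inner product of two elements of $T_l$ by $p$ times the Euclidean inner product plus an error of size $4p\vep_0\|\cdot\|_F\|\cdot\|_F$. For $\beta_l$, the first restart inequality then bounds the numerator $\langle \P_{T_l}(G_l),\P_\Omega\P_{T_l}(P_{l-1})\rangle$ by $p(\kappa_1+4\vep_0)\ln\P_{T_l}(G_l)\rn_F\ln\P_{T_l}(P_{l-1})\rn_F$, while the denominator is $\ge (1-4\vep_0)p\ln\P_{T_l}(P_{l-1})\rn_F^2$. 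Taking the ratio and then invoking the second restart inequality $\ln\P_{T_l}(G_l)\rn_F\le\kappa_2\ln\P_{T_l}(P_{l-1})\rn_F$ gives $|\beta_l|\le\vep_\beta$.

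For $\alpha_l$ the crucial observation is that the defining property of $\beta_l$ in Alg.~\ref{alg:rcg} enforces the conjugate orthogonality $\langle P_l,\P_\Omega\P_{T_l}(P_{l-1})\rangle = 0$, so, since $P_l\in T_l$,
\[
\langle P_l,\P_\Omega P_l\rangle = \langle P_l,\P_\Omega\P_{T_l}(G_l)\rangle,\qquad
\alpha_l\,p - 1 = \frac{\la P_l,(p\P_{T_l}-\P_{T_l}\P_\Omega\P_{T_l})\P_{T_l}(G_l)\ra}{\la P_l,\P_\Omega\P_{T_l}(G_l)\ra}.
\]
The numerator is bounded by $4p\vep_0\ln P_l\rn_F\ln\P_{T_l}(G_l)\rn_F$ directly from the $4\vep_0$-isometry. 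For the denominator I would write $P_l = \P_{T_l}(G_l)+\beta_l\P_{T_l}(P_{l-1})$ and apply the isometry piecewise: the leading term contributes $\ge (1-4\vep_0)p\ln\P_{T_l}(G_l)\rn_F^2$, while the cross term has magnitude $\le |\beta_l|\cdot p(\kappa_1+4\vep_0)\ln\P_{T_l}(G_l)\rn_F\ln\P_{T_l}(P_{l-1})\rn_F$ via the first restart inequality. Substituting the sharper form $|\beta_l|\ln\P_{T_l}(P_{l-1})\rn_F \le (\kappa_1+4\vep_0)\ln\P_{T_l}(G_l)\rn_F/(1-4\vep_0)$ obtained in the $\beta_l$ step (before the $\kappa_2$ reduction), the denominator is lower bounded by a positive multiple of $p\ln P_l\rn_F\ln\P_{T_l}(G_l)\rn_F\,[(1-4\vep_0)-\kappa_1(1+4\vep_0)]$, which cancels against the numerator to give $|\alpha_l p-1|\le\vep_\alpha$.

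\textit{Main obstacle.} The book-keeping for $\alpha_l$ is the delicate part: recovering exactly the denominator factor $(1-4\vep_0)-\kappa_1(1+4\vep_0)$ requires that the cross term $\beta_l\langle \P_{T_l}(P_{l-1}),\P_\Omega\P_{T_l}(G_l)\rangle$ be controlled using the pre-$\kappa_2$ estimate on $|\beta_l|\ln\P_{T_l}(P_{l-1})\rn_F$ rather than the looser $\vep_\beta$, and that after this the $\ln P_l\rn_F$ and $\ln\P_{T_l}(G_l)\rn_F$ factors appearing in the numerator and denominator must line up so as to cancel cleanly. The conjugate-orthogonality identity above is what makes this possible, by reducing $\langle P_l,\P_\Omega P_l\rangle$ to an expression involving $\P_{T_l}(G_l)$.
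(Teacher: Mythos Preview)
Your treatment of the restart case and the bound on $|\beta_l|$ is essentially the paper's argument: split the numerator of $\beta_l$ as $p\la\P_{T_l}(G_l),\P_{T_l}(P_{l-1})\ra$ plus an isometry error, lower bound the denominator by $(1-4\vep_0)p\ln\P_{T_l}(P_{l-1})\rn_F^2$, and then invoke both restart inequalities. That part is fine.

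The gap is in the $\alpha_l$ step, precisely where you flag the ``main obstacle.'' Your piecewise expansion of the denominator $\la P_l,\P_\Omega\P_{T_l}(G_l)\ra$ produces
\[
\la P_l,\P_\Omega\P_{T_l}(G_l)\ra \;\ge\; p\,\ln\P_{T_l}(G_l)\rn_F^{2}\cdot\frac{(1-4\vep_0)^2-(\kappa_1+4\vep_0)^2}{1-4\vep_0},
\]
a lower bound in $\ln\P_{T_l}(G_l)\rn_F^{2}$, \emph{not} in $\ln P_l\rn_F\ln\P_{T_l}(G_l)\rn_F$. Since the numerator is bounded by $4p\vep_0\ln P_l\rn_F\ln\P_{T_l}(G_l)\rn_F$, the factors do not line up; to force the cancellation you assert, you would still need a lower bound of the form $\ln P_l\rn_F \ge c\,\ln\P_{T_l}(G_l)\rn_F$ with the specific constant $c=\bigl[(1-4\vep_0)-\kappa_1(1+4\vep_0)\bigr]/(1-4\vep_0)$, and nothing in your sketch provides it. Substituting instead the triangle-inequality bound $\ln P_l\rn_F\le\frac{1+\kappa_1}{1-4\vep_0}\ln\P_{T_l}(G_l)\rn_F$ yields only $|\alpha_l p-1|\le 4\vep_0/(1-\kappa_1-8\vep_0)$, which is strictly weaker than the stated $\vep_\alpha$.

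The paper closes this gap differently. It does \emph{not} expand $P_l$ in the denominator; instead it keeps $\la P_l,\P_\Omega P_l\ra\ge (1-4\vep_0)p\ln P_l\rn_F^2$ intact, so that after the conjugate-orthogonality identity one has
\[
|\alpha_l p-1|\;\le\;\frac{4\vep_0}{1-4\vep_0}\cdot\frac{\ln\P_{T_l}(G_l)\rn_F}{\ln P_l\rn_F}.
\]
The ratio $\ln\P_{T_l}(G_l)\rn_F/\ln P_l\rn_F$ is then controlled by a separate argument on the \emph{Euclidean} inner product: one shows $\bigl|\la P_l,\P_{T_l}(G_l)\ra\bigr|\ge\bigl(1-\tfrac{\kappa_1(1+4\vep_0)}{1-4\vep_0}\bigr)\ln\P_{T_l}(G_l)\rn_F^{2}$ by bounding $|\beta_l\la\P_{T_l}(G_l),\P_{T_l}(P_{l-1})\ra|$, and then applies Cauchy--Schwarz. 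This is exactly the missing ingredient that yields the denominator factor $(1-4\vep_0)-\kappa_1(1+4\vep_0)$ cleanly.
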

Note that the bounds above are also valid even when restarting occurs since $\kappa_1\ge 0$ and $\kappa_2\ge 0$.
\begin{proof}
The orthogonalization weight $\beta_l$ can be bounded as follows
\begin{align*}
|\beta_l|&=\lab \frac{\la\P_{T_l}\lb G_l\rb,\P_\Omega\P_{T_l}\lb P_{l-1}\rb\ra}{\la\P_{T_l}\lb P_{l-1}\rb, \P_\Omega\P_{T_l}\lb P_{l-1}\rb\ra}\rab\\
&\leq\lab \frac{\la\P_{T_l}\lb G_l\rb,(\P_{T_l}\P_\Omega\P_{T_l}-p\P_{T_l})\lb P_{l-1}\rb\ra}{\la\P_{T_l}\lb P_{l-1}\rb, \P_\Omega\P_{T_l}\lb P_{l-1}\rb\ra}\rab+\lab \frac{p\la\P_{T_l}\lb G_l\rb,\P_{T_l}\lb P_{l-1}\rb\ra}{\la\P_{T_l}\lb P_{l-1}\rb, \P_\Omega\P_{T_l}\lb P_{l-1}\rb\ra}\rab\\
&\leq\frac{4\vep_0p}{(1-4\vep_0)p}\frac{\ln\P_{T_l}(G_l)\rn_F}{\ln\P_{T_l}(P_{l-1})\rn_F}+\frac{p}{(1-4\vep_0)p}\frac{\lab \la\P_{T_l}\lb G_l\rb,\P_{T_l}\lb P_{l-1}\rb\ra\rab}{\ln\P_{T_l}(P_{l-1})\rn_F^2}\\
&\leq\frac{4\kappa_2\vep_0}{1-4\vep_0}+\frac{1}{1-4\vep_0}\frac{\lab \la\P_{T_l}\lb G_l\rb,\P_{T_l}\lb P_{l-1}\rb\ra\rab}{\ln\P_{T_l}(G_l)\rn_F\ln\P_{T_l}(P_{l-1})\rn_F}\frac{\ln\P_{T_l}(G_l)\rn_F}{\ln\P_{T_l}(P_{l-1})\rn_F}\\
&\leq\frac{4\kappa_2\vep_0}{1-4\vep_0}+\frac{\kappa_1\kappa_2}{1-4\vep_0},
\end{align*}
where in the third line we used $\la\P_{T_l}\lb P_{l-1}\rb, \P_\Omega\P_{T_l}\lb P_{l-1}\rb\ra\geq (1-4\vep_0)p\ln\P_{T_l}(P_{l-1})\rn_F^2$ deducted from \eqref{eq:PoPTG_lowerbound}, and the last two inequalities follow from the restarting conditions.

To bound $\alpha_l$, we first need to bound $\ln \P_{T_l}(G_l)\rn_F$ in terms of $\ln\P_{T_l}(P_l)\rn_F$. Note that 
\begin{align*}
|\beta_l\la \P_{T_l}(G_l), \P_{T_l}(P_{l-1})\ra|&=\lab \frac{\la\P_{T_l}\lb G_l\rb,\P_{T_l}\P_\Omega\P_{T_l}\lb P_{l-1}\rb\ra}{\la\P_{T_l}\lb P_{l-1}\rb,\P_{T_l} \P_\Omega\P_{T_l}\lb P_{l-1}\rb\ra}\la \P_{T_l}(G_l), \P_{T_l}(P_{l-1})\ra\rab\\
&\leq\frac{(1+4\vep_0)p\ln\P_{T_l}(G_l)\rn_F}{(1-4\vep_0)p\ln\P_{T_l}(P_{l-1})\rn_F}\lab \la \P_{T_l}(G_l), \P_{T_l}(P_{l-1})\ra\rab\\
&\leq\frac{1+4\vep_0}{1-4\vep_0}\ln\P_{T_l}(G_l)\rn_F^2\frac{\lab \la \P_{T_l}(G_l), \P_{T_l}(P_{l-1})\ra\rab}{\ln\P_{T_l}(G_l)\rn_F\ln\P_{T_l}(P_{l-1})\rn_F}\\
&\leq\frac{\kappa_1(1+4\vep_0)}{1-4\vep_0}\ln \P_{T_l}(G_l)\rn_F^2.
\end{align*}
So 
\begin{align*}
\lab\la\P_{T_l}(P_l),\P_{T_l}(G_l)\ra\rab&=\lab\la \P_{T_l}(G_l)+\beta_l\P_{T_l}(P_{l-1}),\P_{T_l}(G_l)\ra\rab\\
&\geq\ln \P_{T_l}(G_l)\rn_F^2-|\beta_l\la \P_{T_l}(G_l), \P_{T_l}(P_{l-1})\ra|\\
&\geq\lb 1-\frac{\kappa_1(1+4\vep_0)}{1-4\vep_0}\rb\ln \P_{T_l}(G_l)\rn_F^2.
\end{align*}
The application of the Cauchy-Schwarz inequality gives 
\begin{align*}
\ln\P_{T_l}(G_l)\rn_F\leq \frac{1}{1-\frac{\kappa_1(1+4\vep_0)}{1-4\vep_0}}\ln\P_{T_l}(P_l)\rn_F.
\end{align*}
Since $\alpha_l$ can be rewritten as 
\begin{align*}
\alpha_l&=\frac{\la \P_{T_l}\lb G_l\rb, \P_{T_l}\lb P_l\rb\ra}{\la \P_{T_l}\lb P_l\rb, \P_\Omega\P_{T_l}\lb P_l\rb\ra}\\
&=\frac{\la \P_{T_l}\lb G_l\rb, p^{-1}\P_{T_l}\P_\Omega\P_{T_l}\lb P_l\rb\ra}{\la \P_{T_l}\lb P_l\rb, \P_\Omega\P_{T_l}\lb P_l\rb\ra}+\frac{\la \P_{T_l}\lb G_l\rb, (\P_{T_l}-p^{-1}\P_{T_l}\P_\Omega\P_{T_l})\lb P_l\rb\ra}{\la \P_{T_l}\lb P_l\rb, \P_\Omega\P_{T_l}\lb P_l\rb\ra}\\
&=\frac{\la \P_{T_l}\lb G_l\rb, p^{-1}\P_\Omega\P_{T_l}\lb P_l\rb\ra}{\la \P_{T_l}\lb P_l\rb, \P_\Omega\P_{T_l}\lb P_l\rb\ra}+\frac{\la \P_{T_l}\lb G_l\rb, (\P_{T_l}-p^{-1}\P_{T_l}\P_\Omega\P_{T_l})\lb P_l\rb\ra}{\la \P_{T_l}\lb P_l\rb, \P_\Omega\P_{T_l}\lb P_l\rb\ra}\\
&=p^{-1}+\frac{\la \P_{T_l}\lb G_l\rb, (\P_{T_l}-p^{-1}\P_{T_l}\P_\Omega\P_{T_l})\lb P_l\rb\ra}{\la \P_{T_l}\lb P_l\rb, \P_\Omega\P_{T_l}\lb P_l\rb\ra},
\end{align*}
it can be bounded as follows
\begin{align*}
|\alpha_l\cdot p-1|&\leq p\lab \frac{\la \P_{T_l}\lb G_l\rb, (\P_{T_l}-p^{-1}\P_{T_l}\P_\Omega\P_{T_l})\lb P_l\rb\ra}{\la \P_{T_l}\lb P_l\rb, \P_{T_l}\P_\Omega\P_{T_l}\lb P_l\rb\ra}\rab\\
&\leq \frac{4\vep_0p}{(1-4\vep_0)p}\frac{\ln \P_{T_l}\lb G_l\rb\rn_F}{\ln\P_{T_l}(P_l)\rn_F}\\
&\leq\frac{4\vep_0}{(1-4\vep_0)-\kappa_1(1+4\vep_0)},
\end{align*}
which completes the proof.
\end{proof}
The following lemma bounds the local isometry of $\P_\Omega$ when the adaptive 
stepsize $\alpha_l$ is used.
\begin{lemma}\label{lem:rcg_iso2}
Assume $\ln\P_{T_l}-p^{-1}\P_{T_l}\P_\Omega\P_{T_l}\rn\leq 4\vep_0$, and $\alpha_l$ is bounded as in Lem.~\ref{lem:rcg_alpha_beta}. Then the spectral norm of $\P_{T_l}-\alpha_l\P_{T_l}\P_\Omega\P_{T_l}$ can be bounded 
as 
\begin{align*}
\ln \P_{T_l}-\alpha_l\P_{T_l}\P_\Omega\P_{T_l}\rn\leq 4\vep_0+\vep_\alpha(1+4\vep_0).
\end{align*}
\end{lemma}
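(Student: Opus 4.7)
The plan is to mirror the argument used for Lemma~\ref{lem:rgrad_iso2}, which is a triangle inequality splitting $\alpha_l\P_{T_l}\P_\Omega\P_{T_l}$ into the ideal unbiased version $p^{-1}\P_{T_l}\P_\Omega\P_{T_l}$ plus a small perturbation controlled by the quality of the adaptive stepsize.

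First I would write
\begin{align*}
\ln\P_{T_l}-\alpha_l\P_{T_l}\P_\Omega\P_{T_l}\rn
&\leq\ln\P_{T_l}-p^{-1}\P_{T_l}\P_\Omega\P_{T_l}\rn
+\lab\alpha_l-p^{-1}\rab\ln\P_{T_l}\P_\Omega\P_{T_l}\rn.
\end{align*}
The first term on the right is bounded by $4\vep_0$ directly from the standing assumption. For the second term, the factor $|\alpha_l-p^{-1}|$ equals $p^{-1}|\alpha_l p-1|\leq \vep_\alpha/p$ by Lemma~\ref{lem:rcg_alpha_beta}, and $\ln\P_{T_l}\P_\Omega\P_{T_l}\rn\leq(1+4\vep_0)p$ by the same triangle-inequality argument used at the start of the proof of Lemma~\ref{lem:rgrad_alpha}, namely $\ln\P_{T_l}\P_\Omega\P_{T_l}\rn\leq p\ln\P_{T_l}-p^{-1}\P_{T_l}\P_\Omega\P_{T_l}\rn+p\ln\P_{T_l}\rn\leq (1+4\vep_0)p$.

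Combining these two bounds gives
\begin{align*}
\ln\P_{T_l}-\alpha_l\P_{T_l}\P_\Omega\P_{T_l}\rn\leq 4\vep_0+\frac{\vep_\alpha}{p}\cdot(1+4\vep_0)p = 4\vep_0+\vep_\alpha(1+4\vep_0),
\end{align*}
which is exactly the claimed inequality. I do not anticipate a real obstacle here: the entire content of the lemma is packed into the bound $|\alpha_l p-1|\leq\vep_\alpha$ already proved in Lemma~\ref{lem:rcg_alpha_beta}, together with the restricted isometry assumption on $\P_{T_l}$. The only place where a slight care is needed is to verify that the bound $|\alpha_l p-1|\leq\vep_\alpha$ continues to hold even when restarting occurs (so that $\beta_l=0$ and $P_l=\P_{T_l}(G_l)$); but this is precisely the remark made immediately after Lemma~\ref{lem:rcg_alpha_beta}, so the estimate above is valid in both the restarted and non-restarted regimes.
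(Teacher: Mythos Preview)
Your proposal is correct and matches the paper's own proof essentially line for line: the same triangle inequality splitting, the same bound $\ln\P_{T_l}\P_\Omega\P_{T_l}\rn\leq(1+4\vep_0)p$, and the same use of $|\alpha_l p-1|\leq\vep_\alpha$ from Lemma~\ref{lem:rcg_alpha_beta}. Your added remark about the restarting case is also consistent with the paper's comment following that lemma.
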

\begin{proof}
Direct calculations give 
\begin{align*}
\ln \P_{T_l}-\alpha_l\P_{T_l}\P_\Omega\P_{T_l}\rn&\leq \ln \P_{T_l}-p^{-1}\P_{T_l}\P_\Omega\P_{T_l}\rn+|\alpha_l-p^{-1}|\ln \P_{T_l}\P_\Omega\P_{T_l}\rn\\
&\leq 4\vep_0+|\alpha\cdot p-1|\ln p^{-1}\P_{T_l}\P_\Omega\P_{T_l}\rn\\
&\leq 4\vep_0+\vep_\alpha(1+4\vep_0),
\end{align*}
which completes the proof.
\end{proof}
\begin{proof}[Proof of Theorem~\ref{thm:rcg_det}]
We first assume that for all $j\leq l$
\begin{align*}
\frac{\ln X_j-X\rn_F}{\sigmin{X}}\leq\frac{3p^{1/2}\vep_0}{16\beta\log(n)(1+\vep_0)}.\numberthis\label{eq:rcg_extra_assump}
\end{align*}
So together with the first two assumptions \eqref{eq:rgrad_det_cond1} and \eqref{eq:rgrad_det_cond2} of Thm.~\ref{thm:rcg_det}, the application of Lem.~\ref{lem:tech2} gives
\begin{align*}
\ln\P_\Omega\P_{T_j}\rn\leq\frac{8}{3}\beta\log(n)(1+\vep_0)p^{1/2}\numberthis\label{eq:rcg_popt_bd}
\end{align*}
and
\begin{align*}
\ln\P_{T_j}-p^{-1}\P_{T_j}\P_\Omega\P_{T_j}\rn\leq 4\vep_0,\numberthis\label{eq:rcg_iso1}
\end{align*}
which means the assumptions of Lems.~\ref{lem:rcg_alpha_beta} and \ref{lem:rcg_iso2} are satisfied for all $j\leq l$.

Analogous to \eqref{eq:rgrad_recursive}, we have 
\begin{align*}
\ln X_{l+1}-X\rn_F&\leq2\ln X_l+\alpha_lP_l-X \rn_F\\
&=2\ln X_l-X-\alpha_l\P_{T_l}\P_\Omega(X_l-X)+\alpha_l\beta_l\P_{T_l}(P_{l-1})\rn_F\\
&\leq 2\ln (\P_{T_l}-\alpha_l P_{T_l}\P_\Omega\P_{T_l})(X_l-X)\rn_F\\
&\quad+2\ln (\I-\P_{T_l})(X_l-X)\rn_F\\
&\quad+2|\alpha_l|\ln P_{T_l}\P_\Omega(\I-\P_{T_l})(X_l-X)\rn_F\\
&\quad+2|\alpha_l||\beta_l|\ln\P_{T_l}(P_{l-1})\rn_F\\
&:=I_4+I_5+I_6+I_7.\numberthis\label{eq:rcg_recursive}
\end{align*}

Following the argument for the proof of Thm.~\ref{thm:rgrad_det}, the first three terms can be similarly bounded as follows
\begin{align*}
&I_4 \leq (8\vep_0+2\vep_\alpha(1+4\vep_0))\ln X_l-X\rn_F,\\
&I_5 \leq \frac{\vep_0}{1-4\vep_0}\ln X_l-X\rn_F\leq(1+\vep_\alpha)\vep_0\ln X_l-X\rn_F,\\
&I_6\leq(1+\vep_\alpha)\vep_0\ln X_l-X\rn_F.
\end{align*}

To bound $I_7$, first note that $\beta_l\P_{T_l}(P_{l-1})$ can be expressed in terms of all the previous gradients 
\begin{align*}
\beta_l\P_{T_l}(P_{l-1}) = \sum_{j=0}^{l-1}\lb\prod_{i=j+1}^l\beta_i\rb\lb\prod_{k=j}^{l-1}\P_{T_k}\rb(G_j).
\end{align*}
So
\begin{align*}
I_7&\leq 2|\alpha_l|\sum_{j=0}^{l-1}\lb\prod_{i=j+1}^l\beta_i\rb\ln\lb\prod_{k=j}^{l-1}\P_{T_k}\rb(G_j)\rn_F\\
&\leq 2|\alpha_l|\sum_{j=0}^{l-1}\vep_\beta^{l-j}\ln \P_{T_j}(G_j)\rn_F\\
&=2|\alpha_l|\sum_{j=0}^{l-1}\vep_\beta^{l-j}\ln \P_{T_j}\P_\Omega(X_j-X)\rn_F\\
&\leq 2|\alpha_l|\sum_{j=0}^{l-1}\vep_\beta^{l-j}\lb\ln \P_{T_j}\P_\Omega\P_{T_j}(X_j-X)\rn_F+\ln  \P_{T_j}\P_\Omega(\I-\P_{T_j})(X_j-X)\rn_F\rb\\
&\leq 2|\alpha_l|\sum_{j=0}^{l-1}\vep_\beta^{l-j}\lb \ln \P_{T_j}\P_\Omega\P_{T_j}(X_j-X)\rn_F+\ln\P_\Omega\P_{T_j}\rn\frac{\ln X_j-X\rn_F^2}{\sigmin{X}}\rb\\
&\leq \frac{1+\vep_\alpha}{p}\sum_{j=0}^{l-1}\vep_\beta^{l-j}\lb 2(1+4\vep_0)p\ln X_j-X\rn_F+p\vep_0\ln X_j-X\rn_F\rb\\
&=(1+\vep_\alpha)\sum_{j=0}^{l-1}\vep_\beta^{l-j}\lb 2(1+4\vep_0)+\vep_0\rb\ln X_j-X\rn_F,
\end{align*}
where the second inequality follows from the bound for $\beta_i$ in Lem.~\ref{lem:rcg_alpha_beta} and the fact 
\begin{align*}
{\ln\lb\prod_{k=j}^{l-1}\P_{T_k}\rb(G_j)\rn_F\leq \ln \P_{T_j}(G_j)\rn_F,}
\end{align*}
the fourth inequality follows from Lem.~\ref{lem:tech1}, and the last inequality follows from the bound for $\alpha_l$
in Lem.~\ref{lem:rcg_alpha_beta}, \eqref{eq:rcg_extra_assump},  \eqref{eq:rcg_popt_bd}, and \eqref{eq:rcg_iso1}.

Inserting the bounds for $I_4,~I_5,~I_6$ and $I_7$ into \eqref{eq:rcg_recursive} gives
\begin{align*}
\ln X_{l+1}-X\rn_F&\leq \lb 10\vep_0+2\vep_\alpha(1+5\vep_0)\rb\ln X_l-X\rn_F+(1+\vep_\alpha) (2(1+4\vep_0)+\vep_0)\sum_{j=0}^{l-1}\vep_\beta^{l-j}\ln X_j-X\rn_F\\
&\leq \lb 10\vep_0+2\vep_\alpha(1+5\vep_0)\rb\ln X_l-X\rn_F+(2+10\vep_0+2\vep_\alpha(1+5\vep_0))\sum_{j=0}^{l-1}\vep_\beta^{l-j}\ln X_j-X\rn_F\\
&:=\rho_1\ln X_l-X\rn_F+\rho_2\sum_{j=0}^{l-1}\vep_\beta^{l-j}\ln X_j-X\rn_F
\end{align*}
Define 
\begin{align*}
&\tau_1 = \rho_1+\vep_\beta=\frac{18\vep_0-10\vep_0\kappa_1(1+4\vep_0)}{(1-4\vep_0)-\kappa_1(1+4\vep_0)}+\frac{4\kappa_2\vep_0+\kappa_1\kappa_2}{1-4\vep_0},\\
&\tau_2 = (\rho_2-\rho_1)\vep_\beta=\frac{8\kappa_2\vep_0+2\kappa_1\kappa_2}{1-4\vep_0},\\
&\nu_{cg} = \frac{1}{2}\lb\tau_1+\sqrt{\tau_1^2+\tau_2}\rb.
\end{align*}
When $l=0$, Alg.~\ref{alg:rcg} is the same as Alg.~\ref{alg:rgrad}, so
\begin{align*}
\ln X_1-X\rn_F&\leq\frac{18\vep_0}{1-4\vep_0}\ln X_0-X\rn_F\leq \nu_{cg}\ln X_0-X\rn_F,
\end{align*}
following from \eqref{eq:rgrad_contract}.
Therefore Lem.~\ref{lem:tech4} implies 
if $\tau_1+\tau_2<1$, we have $\nu_{cg}<1$ and 
\begin{align*}
\ln X_{l+1}-X\rn_F\leq \nu_{cg}^{l+1}\ln X_0-X\rn_F.
\end{align*}
Similarly, \eqref{eq:rcg_extra_assump} is only required at $l=0$ since we have 
a contractive sequence. 
\end{proof}
\subsection{Proof of Lemma~\ref{lem:error_of_init1}}\label{subsec:proof_err_init1}
Let $W_0=p^{-1}\P_\Omega(X)$. First we have 
\begin{align*}
\ln X_0-X\rn&\leq \ln X_0-W_0\rn+\ln W_0-X\rn\leq 2\ln W_0-X\rn\leq 2\sqrt{\frac{8\beta n^3\log(n)}{3m}}\ln X\rn_\infty,
\end{align*}
where the last inequality holds true with probability at least $1-2n^{1-\beta}$ following from Thm.~\ref{thm:operator_norm}.
Consequently,
\begin{align*}
\ln X_0-X\rn_F&\leq \sqrt{2r}\ln X_0-X\rn\leq \sqrt{\frac{64\beta n^3r\log(n)}{3m}}\ln X\rn_\infty\leq \sqrt{\frac{64\beta\mu_1^2nr^2\log(n)}{3m}}\ln X\rn,
\end{align*}
which completes the proof.
\subsection{Proof of Lemma~\ref{thm:init2}}\label{subsec:proof_err_init2}
We first present a lemma which says the matrix returned by Alg.~\ref{alg:trim} is incoherent 
and its approximation error can be bounded by the approximation error of the input matrix.
\begin{lemma}\label{lem:init2_aux}
Let $Z_l=U_l\Sigma_lV_l^*$ be a rank $r$ matrix such that 
\begin{align*}
\ln Z_l-X\rn_F\leq\frac{\sigmin{X}}{10\sqrt{2}}.
\end{align*}
Then the matrix $\widehat{Z}_l$ returned by Alg.~\ref{alg:trim} satisfies 
\begin{align*}
\ln\P_{\widehat{U}_l}\lb e_i\rb\rn\leq\frac{10}{9}\sqrt{\frac{\mu_0 r}{n}}\mbox{ and }\ln\P_{\widehat{V}_l}\lb e_j\rb\rn\leq\frac{10}{9}\sqrt{\frac{\mu_0 r}{n}}
\end{align*}
for $1\leq i, j\leq n$, where the columns of $\widehat{U}_l$ and $\widehat{V}_l$ are  the right and left singular vectors of $\widehat{Z}_l$ respectively. Furthermore,
\begin{align*}
\ln\widehat{Z}_l-X\rn_F\leq 8\kappa\ln Z_l-X\rn_F.
\end{align*}
\end{lemma}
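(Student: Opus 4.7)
The plan is to show that the trimming step only moves the factors closer to a correctly rotated copy of $(U,V)$, and then to exploit this both for the Frobenius approximation bound and for the incoherence of the singular vectors of $\widehat{Z}_l$.

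First I would use the hypothesis together with Lem.~\ref{lem:tech1} to obtain $\ln U_lU_l^*-UU^*\rn_F\leq 1/10$, and then Lem.~\ref{lem:tech5} to find an $r\times r$ unitary $Q$ with $\ln U_l-UQ\rn_F\leq 1/10$; note that the rows of $UQ$ inherit the incoherence bound $\sqrt{\mu_0 r/n}$ from \textbf{A0} since $Q$ is unitary. The key soft observation in the next step is that row trimming is a row-wise contraction toward $UQ$: for any $i$ with $\ln U_l^{(i)}\rn>\sqrt{\mu_0 r/n}$,
\begin{align*}
\ln U_l^{(i)}-A_l^{(i)}\rn = \ln U_l^{(i)}\rn - \sqrt{\mu_0 r/n} \leq \ln U_l^{(i)}\rn - \ln (UQ)^{(i)}\rn \leq \ln U_l^{(i)}-(UQ)^{(i)}\rn,
\end{align*}
while rows below the threshold are untouched. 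Summing squares gives $\ln U_l-A_l\rn_F\leq\ln U_l-UQ\rn_F\leq 1/10$ and an analogous bound for $V_l-B_l$; by Weyl's inequality this also yields $\sigmin{A_l},\sigmin{B_l}\geq 9/10$.

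For the Frobenius approximation bound I would split
\[
\widehat{Z}_l - Z_l = (A_l-U_l)\Sigma_l B_l^* + U_l\Sigma_l(B_l-V_l)^*.
\]
Because $A_l$ and $B_l$ are obtained from $U_l,V_l$ by left-multiplication by diagonal contractions, $\ln A_l\rn,\ln B_l\rn\leq 1$; moreover $\ln\Sigma_l\rn\leq\sigmax{X}+\ln Z_l-X\rn_F$. Combining these with the row-wise estimates and the triangle inequality $\ln\widehat{Z}_l-X\rn_F\leq\ln\widehat{Z}_l-Z_l\rn_F+\ln Z_l-X\rn_F$ yields a bound proportional to $\kappa\ln Z_l-X\rn_F$, and tracking the constants carefully produces the claimed factor $8\kappa$.

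The step I expect to be hardest is the incoherence bound, because $A_l$ is not orthonormal and $\widehat{U}_l$ is defined via the SVD of $\widehat{Z}_l$, not as a normalization of $A_l$. My plan is to observe that $\Sigma_l$ is invertible and $\sigmin{B_l}\geq 9/10>0$, so $\Sigma_l B_l^*$ has full row rank, and hence the column space of $\widehat{Z}_l$ coincides with $\mbox{range}(A_l)$. Taking a thin QR factorization $A_l=Q_A R_A$, we get $\P_{\widehat{U}_l}=Q_AQ_A^*$ and $Q_A^* e_i = R_A^{-*}(A_l^{(i)})^*$, whence
\begin{align*}
\ln\P_{\widehat{U}_l}(e_i)\rn \leq \sigmin{A_l}^{-1}\ln A_l^{(i)}\rn \leq \tfrac{10}{9}\sqrt{\mu_0 r/n},
\end{align*}
using the trimming bound on the rows of $A_l$. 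The symmetric argument on $B_l$ delivers the bound for $\P_{\widehat{V}_l}$.
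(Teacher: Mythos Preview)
Your argument is correct, and the incoherence part (via the QR factorization of $A_l$ and the bound $\sigmin{A_l}\geq 9/10$) is essentially the paper's argument verbatim.

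The Frobenius bound, however, you obtain by a genuinely different and somewhat cleaner route. The paper compares $\widehat{Z}_l$ directly to $X$: it introduces unitaries $Q_u,Q_v$, proves the row--wise contraction in the form $\ln A_l^{(i)}-(UQ_u)^{(i)}\rn\leq\ln U_l^{(i)}-(UQ_u)^{(i)}\rn$ (projection onto a ball is nonexpansive toward points of the ball), and separately establishes the intermediate estimate $\ln\Sigma_l-Q_u^*\Sigma Q_v\rn_F\leq 4\kappa d$ before telescoping $A_l\Sigma_lB_l^*-(UQ_u)(Q_u^*\Sigma Q_v)(VQ_v)^*$. You instead compare $\widehat{Z}_l$ to $Z_l$: since $\widehat{Z}_l$ and $Z_l$ share the middle factor $\Sigma_l$, the split $\widehat{Z}_l-Z_l=(A_l-U_l)\Sigma_lB_l^*+U_l\Sigma_l(B_l-V_l)^*$ needs only $\ln A_l-U_l\rn_F$ and $\ln B_l-V_l\rn_F$, which you bound via the alternate row--wise inequality $\ln U_l^{(i)}-A_l^{(i)}\rn\leq\ln U_l^{(i)}-(UQ)^{(i)}\rn$. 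This sidesteps the $\Sigma_l$ perturbation bound entirely and, if you track constants, actually delivers roughly $4\kappa$ rather than $8\kappa$. The paper's route has the minor advantage that the contraction inequality it uses is the standard ``projection onto a convex set is nonexpansive'' fact, whereas yours relies on the specific radial structure of the trimming map; but both are one--line observations, and your decomposition is shorter overall.
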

\begin{proof}
For simplicity, let $d=\ln Z_l-X\rn_F$. By Lem.~\ref{lem:tech1}, we have 
\begin{align*}
\ln U_lU_l^*-UU^*\rn_F\leq \frac{\sqrt{2}d}{\sigmin{X}}
\quad\mbox{and}\quad
\ln V_lV_l^*-VV^*\rn_F\leq \frac{\sqrt{2}d}{\sigmin{X}}.
\end{align*}
This together with Lem.~\ref{lem:tech5} implies that there exist two unitary matrices $Q_u\in\R^{r\times r}$ and $Q_v\in\R^{r\times r}$ such that 
\begin{align*}
\ln U_l-UQ_u\rn_F\leq\frac{\sqrt{2}d}{\sigmin{X}}
\quad\mbox{ and }\quad
\ln V_l-VQ_v\rn_F\leq\frac{\sqrt{2}d}{\sigmin{X}}.
\end{align*}
It follows that 
\begin{align*}
\ln\Sigma_l-Q_u^*\Sigma Q_v\rn_F&=\ln U_l^*Z_lV_l-(UQ_u)^*X(VQ_v)\rn_F\\
&\leq \ln U_l^*Z_lV_l-(UQ_u)^*Z_lV_l\rn_F+\ln (UQ_u)^*Z_lV_l-(UQ_u)^*XV_l\rn_F\\
&\quad + \ln (UQ_u)^*XV_l-(UQ_u)^*X(VQ_v)\rn_F\\
&\leq\ln U_l-UQ_u\rn_F\ln Z_l\rn+\ln Z_l-X\rn_F+\ln X\rn\ln V_l-VQ_v\rn_F\\
&\leq \frac{\sqrt{2}d(d+\sigmax{X})}{\sigmin{X}}+d+\frac{\sqrt{2}d\sigmax{X}}{\sigmin{X}}\\
&\leq 4\kappa d,\numberthis\label{eq:init2_S}
\end{align*}
where $\kappa$ is the condition number of $X$, and we have used the assumption $d\leq\sigmin{X}/10\sqrt{2}\leq \sigmax{X}/10\sqrt{2}$ and the fact
\begin{align*}
\ln Z_l\rn\leq \ln X\rn+\ln Z_l-X\rn\leq \sigmax{X}+d.
\end{align*}
in the last two inequalities. 

Recall that $A_l$ and $B_l$ in Alg.~\ref{alg:trim} are defined as 
\begin{align*}
A_l^{(i)} = \frac{U_l^{(i)}}{\ln U_l^{(i)}\rn}\min\lb \ln U_l^{(i)}\rn,\sqrt{\frac{\mu_0 r}{n}}\rb,\quad\mbox{and}\quad B_l^{(i)} = \frac{V_l^{(i)}}{\ln V_l^{(i)}\rn}\min\lb \ln V_l^{(i)}\rn,\sqrt{\frac{\mu_0 r}{n}}\rb.
\end{align*}
{Because 
\begin{align*}
\ln (UQ_u)^{(i)}\rn\leq\sqrt{\frac{\mu_0r}{n}}\mbox{ and }\ln (VQ_v)^{(i)}\rn\leq\sqrt{\frac{\mu_0r}{n}},
\end{align*}
we have 
\begin{align*}
\ln A_l^{(i)}-(UQ_u)^{(i)}\rn\leq \ln U_l^{(i)}-(UQ_u)^{(i)}\rn
\quad\mbox{and}\quad
\ln B_l^{(i)}-(VQ_v)^{(i)}\rn\leq \ln V_l^{(i)}-(VQ_v)^{(i)}\rn.
\end{align*}}
Therefore
\begin{align*}
&\ln A_l-UQ_u\rn_F\leq \ln U_l-UQ_u\rn_F\leq\frac{\sqrt{2}d}{\sigmin{X}},\numberthis\label{eq:init2_U}\\
&\ln B_l-VQ_v\rn_F\leq \ln V_l-VQ_v\rn_F\leq\frac{\sqrt{2}d}{\sigmin{X}}.\numberthis\label{eq:init2_V}
\end{align*}
Since $\widehat{Z}_l=A_l\Sigma_lB_l^*$ by Alg.~\ref{alg:trim}, we have 
\begin{align*}
\ln \widehat{Z}_l-X\rn_F&=\ln A_l\Sigma_lB_l^*-(UQ_u)(Q_u^*\Sigma Q_v)(VQ_v)^*\rn_F\\
&\leq \ln A_l\Sigma_lB_l^*-(UQ_u)\Sigma_lB_l^*\rn_F + \ln (UQ_u)\Sigma_lB_l^*-(UQ_u)(Q_u^*\Sigma Q_v)B_l^*\rn_F\\
&\quad + \ln (UQ_u)(Q_u^*\Sigma Q_v)B_l^*-(UQ_u)(Q_u^*\Sigma Q_v)(VQ_v)^*\rn_F\\
&\leq \ln A_l-UQ_u\rn_F\ln\Sigma_l\rn\ln B_l\rn+\ln\Sigma_l-Q_u^*\Sigma Q_v\rn_F\ln B_l\rn+\ln\Sigma\rn\ln B_l-VQ_v\rn_F\\
&\leq \frac{\sqrt{2}d}{\sigmin{X}}(\sigmax{X}+d)\lb\frac{\sqrt{2}d}{\sigmin{X}}+1\rb + 4\kappa d\lb\frac{\sqrt{2}d}{\sigmin{X}}+1\rb+\sigmax{X}\frac{\sqrt{2}d}{\sigmin{X}}\\
&\leq 8\kappa d,
\end{align*}
where in the last two inequalities we use \eqref{eq:init2_S}, the assumption $d\leq\sigmin{X}/10\sqrt{2}\leq \sigmax{X}/10\sqrt{2}$  and the fact
\begin{align*}
\ln\Sigma_l\rn=\ln Z_l\rn\leq \ln X\rn+\ln Z_l-X\rn\leq \sigmax{X}+d.
\end{align*}

It remains to estimate the incoherence of $\widehat{Z}_l$.  Because $A_l$ and $B_l$
 are not necessarily orthogonal, we consider their QR factorizations:
 \begin{align*}
 A_l=\widetilde{U}_lR_u \mbox{ and }B_l=\widetilde{V}_lR_v. 
 \end{align*}
 First note that 
{ \begin{align*}
& \sigmin{A_l}\geq  1-\ln A_l-UQ_u\rn\geq 1-\frac{\sqrt{2}d}{\sigmin{X}}\geq \frac{9}{10},\\
&\sigmin{B_l}\geq 1-\ln B_l-VQ_v\rn\geq 1-\frac{\sqrt{2}d}{\sigmin{X}}\geq \frac{9}{10}
 \end{align*}
 following from \eqref{eq:init2_U}, \eqref{eq:init2_V}, the assumption $d\leq\sigmin{X}/10\sqrt{2}$ and the Weyl inequality. Therefore
 \begin{align*}
 \ln R_u^{-1}\rn\leq \frac{10}{9}\mbox{ and }\ln R_v^{-1}\rn\leq \frac{10}{9}.
 \end{align*}}
 Consequently,
 \begin{align*}
 \ln\widehat{U}_l^{(i)}\rn=\ln \widetilde{U}^{(i)}_l\rn=\ln A_l^{(i)}R_u^{-1}\rn\leq \frac{10}{9}\sqrt{\frac{\mu_0r}{n}}
\quad\mbox{ and }
 \ln\widehat{V}_l^{(i)}\rn=\ln \widetilde{V}^{(i)}_l\rn=\ln B_l^{(i)}R_v^{-1}\rn\leq \frac{10}{9}\sqrt{\frac{\mu_0r}{n}}
 \end{align*}
 following from the construction of $A_l$ and $B_l$.
\end{proof}

\begin{proof}[Proof of Lemma~\ref{thm:init2}]
First assume that 
\begin{align*}
\ln Z_l-X\rn_F\leq \frac{\sigmin{X}}{256\kappa^2}.\numberthis\label{eq:init2_bd_assump}
\end{align*}
It follows immediately from Lem.~\ref{lem:init2_aux} that $\hZ_l$ is  an incoherent matrix with  the incoherence parameter $\frac{100}{81}\mu_0$ and 
\begin{align*}
\ln\hZ_l-X\rn_F\leq 8\kappa\ln Z_l-X\rn_F.
\end{align*}

Analogous to \eqref{eq:rgrad_recursive}, the approximation error at the $(l+1)$th iteration can be decomposed 
as follows
\begin{align*}
\ln Z_{l+1}-X\rn_F&\leq 2\ln (\P_{\hT_l}-\frac{n^2}{\tilm}\P_{\hT_l}\P_{\Omega_{l+1}}\P_{\hT_l})(\hZ_l-X)\rn_F\\
&\quad+2\ln(\I-\P_{\hT_l})(\hZ_l-X)\rn_F\\
&\quad+2\ln\frac{n^2}{\tilm} \P_{\hT_l}\P_{\Omega_{l+1}}(\I-\P_{\hT_l})(\hZ_l-X)\rn_F\\
&:=I_8+I_9+I_{10}.\numberthis\label{eq:init2_recursive}
\end{align*}

Since $\hZ_l$ (and consequently $\hT_l$) is independent of $\Omega_{l+1}$,  Thm.~\ref{thm:isometry} implies
\begin{align*}
\ln \P_{\hT_l}-\frac{n^2}{\tilm}\P_{\hT_l}\P_{\Omega_{l+1}}\P_{\hT_l}\rn\leq\sqrt{\frac{3200\beta\mu_0nr\log(n)}{243\tilm}}.
\end{align*}
 with probability 
at least $1-2n^{2-2n}$. So
\begin{align*}
I_8&\leq 2\sqrt{\frac{3200\beta\mu_0nr\log(n)}{243\tilm}}\ln\hZ_l-X\rn_F\\
&\leq 16\kappa\sqrt{\frac{3200\beta\mu_0nr\log(n)}{243\tilm}}\ln Z_l-X\rn_F.
\end{align*}

For $I_9$, Lem.~\ref{lem:tech1} implies 
\begin{align*}
I_9\leq \frac{2\ln \hZ_l-X\rn_F^2}{\sigmin{X}}\leq \frac{128\kappa^2\ln Z_l-X\rn_F^2}{\sigmin{X}}\leq\frac{1}{2}\ln Z_l-X\rn_F,
\end{align*}
where the last inequality follows from \eqref{eq:init2_bd_assump}.

To bound $I_{10}$, note again $\hZ_l$ (and consequently $\hT_l$) is independent of $\Omega_{l+1}$ with the incoherence 
parameter $\frac{100}{81}\mu_0$. So Lem.~\ref{lem:tech3} implies 
\begin{align*}
&\ln\frac{n^2}{\tilm}\P_{T_l}\P_{\Omega_{l+1}}(\P_U-\P_{U_l})-\P_{T_l}(\P_U-\P_{U_l})\rn\leq\sqrt{\frac{4800\beta\mu_0nr\log(n)}{81\tilm}}.
\end{align*}
with probability at least $1-2n^{2-2\beta}$.
Since 
\begin{align*}
(\I-\P_{\hT_l})(\hZ_l-X)&=-(\I-\P_{\hT_l})(X)\\
&=-UU^*X+\hU_l\hU_l^*X+UU^*X\hV_l\hV_l^*-\hU_l\hU_l^*X\hV_l\hV_l^*\\
&=-(UU^*-\hU_l\hU_l^*)X(I-\hV_l\hV_l^*)\\
&=(UU^*-\hU_l\hU_l^*)(\hZ_l-X)(I-\hV_l\hV_l^*)\\
&=(\P_U-\P_{\hU_l})(\I-\P_{\hV_l})(\hZ_l-X),
\end{align*}
we have 
\begin{align*}
I_{10}&=2\ln \frac{n^2}{\tilm} \P_{\hT_l}\P_{\Omega_{l+1}}(\P_U-\P_{\hU_l})(\I-\P_{\hV_l})(\hZ_l-X)\rn_F\\
&=2\ln \frac{n^2}{\tilm} \P_{\hT_l}\P_{\Omega_{l+1}}(\P_U-\P_{\hU_l})(\I-\P_{\hV_l})(\hZ_l-X)-\P_{\hT_l}(\P_U-\P_{\hU_l})(\I-\P_{\hV_l})(\hZ_l-X)\vphantom{\frac{n^2}{\tilm}}\rn_F\\
&\leq2\ln\frac{n^2}{\tilm}\P_{T_l}\P_{\Omega_{l+1}}(\P_U-\P_{U_l})-\P_{T_l}(\P_U-\P_{U_l})\rn\ln \hZ_l-X\rn_F\\
&\leq 2\sqrt{\frac{4800\beta\mu_0 nr\log(n)}{81\tilm}}\ln \hZ_l-X\rn_F\\
&\leq 16\kappa\sqrt{\frac{4800\beta\mu_0 nr\log(n)}{81\tilm}}\ln Z_l-X\rn_F.
\end{align*}
where the second equality follows from 
\begin{align*}
&\P_{\hT_l}(\P_U-\P_{\hU_l})(\I-\P_{\hV_l})(\hZ_l-X)=\P_{\hT_l}(\I-\P_{\hT_l})(\hZ_l-X)=0.
\end{align*}

Putting the bounds for $I_8$, $I_9$ and $I_{10}$ together gives 
\begin{align*}
\ln Z_{l+1}-X\rn_F&\leq \lb\frac{1}{2}+182\kappa\sqrt{\frac{\beta \mu_0nr\log(n)}{\tilm}}\rb\ln Z_l-X\rn_F\\
&\leq\frac{5}{6}\ln Z_l-X\rn_F
\end{align*}
with probability at least $1-4n^{2-2\beta}$ provided 
\begin{align*}
\tilm\geq C\beta\mu_0\kappa^2nr\log(n)\numberthis\label{eq:mhat_l}
\end{align*}
with $C$ being sufficiently large.  Clearly \eqref{eq:init2_bd_assump} is also valid for $(l+1)$th iteration.

Since $Z_0 = \H_r\lb \frac{\tilm}{n^2}\P_{\Omega_0}(X)\rb$, \eqref{eq:init2_bd_assump} is valid for $l=0$ with probability $1-2n^{1-\beta}$ provided 
\begin{align*}
\tilm\geq C\beta \mu_1^2\kappa^6nr^2\log(n)\numberthis\label{eq:mhat_0}
\end{align*}
following from Lem.~\ref{lem:error_of_init1}. Therefore taking a maximum of the right hand sides of \eqref{eq:mhat_l} and \eqref{eq:mhat_0} gives 
\begin{align*}
\ln Z_L-X\rn_F\leq\lb\frac{5}{6}\rb^{L}\frac{\sigmin{X}}{256\kappa^2}
\end{align*}
with probability at least $1-2n^{1-\beta}-4Ln^{2-2\beta}$ provided
\begin{align*}
\tilm\geq C\beta\max\lcb\mu_0,\mu_1^2\rcb\kappa^6nr^2\log(n).
\end{align*}
\end{proof}

\section{Conclusion and Future Direction}\label{sec:conclusion}
This manuscript presents the sampling complexity for a class of  Riemannian gradient descent and conjugate gradient descent algorithms by interpreting them as iterative hard thresholding algorithms with subspace projections. To the best of our knowledge, this is the first work that provides  recovery guarantees of the Riemannian optimization algorithms on the embedded low rank matrix manifold for matrix completion. Our first sampling complexity for the algorithms with one step hard thresholding initialization is  $O(n^{1.5}r\log^{1.5}(n))$. We would like to know whether this result can be improved to the nearly optimal one $O(nr\hspace{0.05cm}\mbox{polylog}(n))$ \cite{candestao2009mc} since the simulation results in Sec.~\ref{subsec:phase} strongly suggests so. The extra $r$ term in the second recovery guarantee result comes from bounding the matrix spectral norm by the Frobenius norm. To further optimize the second sampling complexity, we may need to establish convergence of the algorithms in terms of the matrix spectral norm rather than the Frobenius norm.

It would be of interest to investigate whether we can extend our results to the case whether the unknown matrix is approximately low rank. Since in this case $\P_\Omega(X)$ can be decomposed as $\P_\Omega(X)=\P_\Omega(X_r)+\P_\Omega(X-X_r)$, where $X_r$ is the best rank $r$ approximation of $X$, it is equivalent to study the robustness of the algorithms under  additive noise which is interesting by itself. The empirical observations in Sec.~\ref{subsec:noise} suggests that both the Riemannian gradient descent and conjugate gradient descent algorithms are very robust under additive Gaussian white noise. Robustness analysis of the algorithms is scope for future work. It would also be desirable to consider more general  and practical sampling models in matrix completion for the Riemannian optimization algorithms (Algs.~\ref{alg:rgrad} and ~\ref{alg:rcg}).

The Riemannian gradient descent and conjugate gradient descent algorithms 
presented in this manuscript apply equally to other low rank reconstruction problems, such as phase retrieval \cite{GeSaPhase,phaselift1,wirtinger,kacz_phase,chencandes,wright_phase} and blind deconvolution \cite{blind_conv,strohmer_demixing}. This line of research will be pursued
independently in the future. In phase retrieval and blind deconvolution,  the underlying matrix after lifting is rank one. Since  the condition number of a rank one matrix is alway equal to one,  it is worth investigating whether we can obtain recovery guarantees of the form $O(n\hspace{0.05cm}\mbox{polylog}(n))$ with a universal constant. 
\section*{Acknowledgments}
KW has been supported by DTRA-NSF grant No.1322393. SL was supported in part by the Hong Kong RGC grant 16303114. KW would like to thank Xiaodong Li, Shuyang Ling and Thomas Strohmer for stimulating discussions about phase retrieval and blind deconvolution.
\bibliography{iht}
\bibliographystyle{plain}
\appendix
\section{Supplementary Results}
In this section, we will list the tail probability bounds  from the literature which have been 
used in this manuscript. We start with the Noncommutative Bernstein inequality. 
\begin{theorem}[Noncommutative Bernstein Inequality, \cite{AhlWin,recht2011simple}]\label{thm:nonc_bernstein} Let $Z_1,\cdots,Z_L$ be $n\times n$ independent zero-mean random matrices. Suppose $\sigma_k^2=\max\lcb\ln\E\lsb Z_kZ_k^*\rsb\rn, \ln\E\lsb Z_k^*Z_k\rsb\rn\rcb$ and $\ln Z_k\rn\leq R$ almost surely for all $k$. Then for any $t>0$,
\begin{align*}
\Prob\lsb\ln\sum_{k=1}^L Z_k\rn>t\rsb\leq 2n\exp\lb \frac{-t^2/2}{\sum_{k=1}^L\sigma_k^2+Rt/3}\rb.
\end{align*}
Moreover, if $t\leq\frac{1}{R}\sum_{k=1}^L\sigma_k^2$, 
\begin{align*}
\Prob\lsb\ln\sum_{k=1}^L Z_k\rn>t\rsb\leq 2n\exp\lb \frac{-\frac{3}{8}t^2}{\sum_{k=1}^L\sigma_k^2}\rb.
\end{align*}
\end{theorem}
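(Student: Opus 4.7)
The plan is to adapt the matrix Laplace-transform method of Ahlswede--Winter. First I would reduce to the Hermitian case by passing to the self-adjoint dilation
\[
\widetilde{Z}_k=\begin{pmatrix}0 & Z_k\\ Z_k^* & 0\end{pmatrix}\in\mathbb{R}^{2n\times 2n},
\]
which is zero-mean and Hermitian, satisfies $\ln\widetilde{Z}_k\rn=\ln Z_k\rn\leq R$, and obeys $\lambda_{\max}\lb\sum_k\widetilde{Z}_k\rb=\ln\sum_k Z_k\rn$. Moreover $\widetilde{Z}_k^2=\mathrm{diag}\dobr{Z_kZ_k^*,\,Z_k^*Z_k}$, so $\ln\E\,\widetilde{Z}_k^2\rn=\sigma_k^2$. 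This turns the spectral-norm tail into a tail for the largest eigenvalue of a sum of independent Hermitian matrices.

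Next I would apply the standard matrix Chernoff step: for any $\theta>0$, Markov's inequality combined with monotonicity of the trace exponential gives
\[
\Prob\lsb\lambda_{\max}\lb\sum_{k}\widetilde{Z}_k\rb>t\rsb\leq e^{-\theta t}\,\E\,\mathrm{tr}\exp\lb\theta\sum_{k}\widetilde{Z}_k\rb.
\]
The key obstacle, and the main technical input, is decoupling the expectation across the independent $\widetilde{Z}_k$ in the presence of non-commutativity; I would do this by applying the Golden--Thompson inequality inductively (Ahlswede--Winter) or, more cleanly, by invoking Lieb's concavity theorem, to obtain
\[
\E\,\mathrm{tr}\exp\lb\theta\sum_k\widetilde{Z}_k\rb\leq 2n\prod_{k=1}^{L}\lambda_{\max}\lb\E\exp(\theta\widetilde{Z}_k)\rb.
\]

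For each factor I would bound the scalar MGF by using the bound $e^x\leq 1+x+g(\theta)x^2$ for $|x|\leq\theta R$, where $g(\theta)=\dobr{e^{\theta R}-1-\theta R}/(\theta R)^2$, extended to Hermitian operators with spectrum in $[-R,R]$. Together with $\E\widetilde{Z}_k=0$ this yields
\[
\E\exp(\theta\widetilde{Z}_k)\preceq I+\theta^2 g(\theta)\E\widetilde{Z}_k^2\preceq \exp\lb\theta^2 g(\theta)\E\widetilde{Z}_k^2\rb,
\]
so $\lambda_{\max}\lb\E\exp(\theta\widetilde{Z}_k)\rb\leq\exp\lb\theta^2 g(\theta)\sigma_k^2\rb$. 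Combining the pieces gives the master tail bound
\[
\Prob\lsb\ln\sum_k Z_k\rn>t\rsb\leq 2n\exp\lb\lb e^{\theta R}-1-\theta R\rb\frac{\sum_k\sigma_k^2}{R^2}-\theta t\rb,
\]
after repeating the same argument for $-\sum_k\widetilde{Z}_k$ (or absorbing the two-sided tail into the factor $2n$).

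Finally I would optimize $\theta$. Using the elementary inequality $e^x-1-x\leq x^2/(2(1-x/3))$ for $0\leq x<3$, the exponent becomes at most $\theta^2\sum_k\sigma_k^2/(2(1-\theta R/3))-\theta t$; choosing $\theta=t/\dobr{\sum_k\sigma_k^2+Rt/3}$ produces the first (Bernstein) inequality. The second inequality follows by noting that under the hypothesis $t\leq R^{-1}\sum_k\sigma_k^2$ we have $\theta R\leq 3/4$ (say), so $1-\theta R/3\geq 3/4$ and the denominator $\sum_k\sigma_k^2+Rt/3$ is at most $\frac{4}{3}\sum_k\sigma_k^2$, which converts the exponent to $-3t^2/\dobr{8\sum_k\sigma_k^2}$. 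The one genuinely non-routine step is the Golden--Thompson/Lieb decoupling; the rest is bookkeeping with scalar Bernstein-type estimates.
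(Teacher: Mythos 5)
The paper does not prove this theorem; it is quoted verbatim from the cited references (Ahlswede--Winter and Recht's \emph{A Simpler Approach to Matrix Completion}), so there is no internal proof to compare against. Your argument correctly reconstructs the standard proof from those sources --- self-adjoint dilation to reduce to the Hermitian case (which also explains the factor $2n$ as the trace of the $2n\times 2n$ identity and already captures both tails, making your ``repeat for the negative sum'' remark unnecessary), the Golden--Thompson decoupling, the Bernstein-type MGF bound, and the optimization $\theta=t/(\sum_k\sigma_k^2+Rt/3)$ together with the observation that the hypothesis $t\leq R^{-1}\sum_k\sigma_k^2$ bounds the denominator by $\tfrac{4}{3}\sum_k\sigma_k^2$, yielding the $-\tfrac{3}{8}t^2/\sum_k\sigma_k^2$ exponent.
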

Two tail probability bounds for  $\P_\Omega$ can be obtained using the Noncommutative Bernstein inequality under the sampling with 
replacement model.
\begin{theorem}[\cite{recht2011simple,gross2011recoverlowrank,candesrecht2009mc}]\label{thm:isometry}
Suppose $\Omega$ with $|\Omega|=m$ is a set of indices sampled independently and uniformly with replacement. Let $X=U\Sigma V^*\in\R^{n\times n}$  be a rank $r$ matrix with the corresponding tangent space $T$. 
Assume 
\begin{align*}
\ln\P_U\lb e_i\rb\rn\leq\sqrt{\frac{\mu r}{n}}\quad\mbox{and}\quad\ln\P_V\lb e_j\rb\rn\leq\sqrt{\frac{\mu r}{n}}
\end{align*}
for $1\leq i, j\leq n$. 
Then for all $\beta >1$,
\begin{align*}
\ln \P_T-\frac{n^2}{m}\P_T\P_\Omega\P_T\rn\leq \sqrt{\frac{32\beta\mu nr\log(n)}{3m}}
\end{align*}
with probability at least $1-2n^{2-2\beta}$ provided $m\geq \frac{32}{3}\beta\mu nr\log(n)$.
\end{theorem}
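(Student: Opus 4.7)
The plan is to apply the Noncommutative Bernstein inequality (Thm.~\ref{thm:nonc_bernstein}) to a natural sum-of-rank-one representation of $\P_T\P_\Omega\P_T$ after vectorization. Under the sampling with replacement model, write $\Omega=\lcb(a_k,b_k)\rcb_{k=1}^m$ for i.i.d.\ uniform draws and define the random self-adjoint operator $\mathcal{Z}_k$ on $\R^{n\times n}$ by
\begin{align*}
\mathcal{Z}_k(M) = \la M,\P_T(e_{a_k}e_{b_k}^*)\ra\,\P_T(e_{a_k}e_{b_k}^*),
\end{align*}
so that $\P_T\P_\Omega\P_T=\sum_{k=1}^m \mathcal{Z}_k$. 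A direct computation gives $\E\lsb\mathcal{Z}_k\rsb=n^{-2}\P_T$, so setting $\mathcal{S}_k:=\mathcal{Z}_k-\E\lsb\mathcal{Z}_k\rsb$ yields the zero-mean decomposition $\P_T-\frac{n^2}{m}\P_T\P_\Omega\P_T=-\frac{n^2}{m}\sum_{k=1}^m\mathcal{S}_k$, and the task reduces to bounding $\ln\sum_k\mathcal{S}_k\rn$ via Bernstein.

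The key ingredient is the incoherence-driven identity $\ln\P_T(e_ie_j^*)\rn_F^2\leq 2\mu r/n$, proved by decomposing $\P_T(e_ie_j^*)=\P_U(e_i)e_j^*+(I-\P_U)e_i(\P_V(e_j))^*$ into two orthogonal pieces and invoking the assumed incoherence bounds on $\ln\P_U(e_i)\rn$ and $\ln\P_V(e_j)\rn$. Since each $\mathcal{Z}_k$ is a rank-one positive-semidefinite operator, its operator norm equals $\ln\P_T(e_{a_k}e_{b_k}^*)\rn_F^2$, giving the uniform bound $\ln\mathcal{S}_k\rn\leq R:=2\mu r/n$. For the variance proxy, the rank-one structure yields $\mathcal{Z}_k^2=\ln\P_T(e_{a_k}e_{b_k}^*)\rn_F^2\,\mathcal{Z}_k$, hence $\E\lsb\mathcal{Z}_k^2\rsb\preceq\frac{2\mu r}{n}\E\lsb\mathcal{Z}_k\rsb=\frac{2\mu r}{n^3}\P_T$, and consequently $\sigma_k^2:=\ln\E\lsb\mathcal{S}_k^2\rsb\rn\leq 2\mu r/n^3$, so $\sum_{k=1}^m\sigma_k^2\leq 2\mu rm/n^3$.

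Applying Thm.~\ref{thm:nonc_bernstein} with ambient dimension $n^2$ (since the Bernstein inequality is invoked for operators on $\R^{n\times n}$ regarded as $n^2\times n^2$ matrices after vectorization) gives
\begin{align*}
\Prob\lsb\ln\sum_k\mathcal{S}_k\rn>t\rsb\leq 2n^2\exp\lb-\frac{3t^2\,n^3}{16\mu rm}\rb
\end{align*}
in the regime $t\leq\sum_k\sigma_k^2/R=m/n^2$. Choosing $t=\frac{m}{n^2}\sqrt{\frac{32\beta\mu nr\log(n)}{3m}}$ makes the right-hand side equal to $2n^{2-2\beta}$, and the sampling assumption $m\geq\frac{32}{3}\beta\mu nr\log(n)$ is exactly what is needed to keep this choice of $t$ in the sharper Bernstein regime. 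Multiplying by $n^2/m$ on both sides then yields the advertised operator-norm bound. The main obstacle is bookkeeping: identifying the correct rank-one structure of $\mathcal{Z}_k$ to get both the uniform bound and the variance at the common scale $\mu r/n$, and correctly tracking the $n^2$ (rather than $n$) dimension factor introduced by vectorization, which is what ultimately produces the probability $1-2n^{2-2\beta}$ rather than $1-2n^{1-\beta}$.
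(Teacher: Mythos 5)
Your proof is correct and is essentially the standard Bernstein argument for this theorem (which the paper itself imports from \cite{recht2011simple,gross2011recoverlowrank} rather than reproving); it also mirrors, in the symmetric setting, the paper's own proof of the asymmetric variant in Lem.~\ref{lem:tech3}, down to the rank-one operator decomposition, the incoherence bound $\ln\P_T(e_ie_j^*)\rn_F^2\leq 2\mu r/n$, and the invocation of the sharper regime of Thm.~\ref{thm:nonc_bernstein} with ambient dimension $n^2$. The only step you assert without justification is $\ln\mathcal{S}_k\rn\leq 2\mu r/n$, which does hold because $\mathcal{Z}_k$ and $\E\lsb\mathcal{Z}_k\rsb=n^{-2}\P_T$ are both positive semidefinite so their difference has norm at most the larger of the two norms; with that observation the constants, the regime condition $t\leq m/n^2$ (equivalent to $m\geq\frac{32}{3}\beta\mu nr\log(n)$), and the failure probability $2n^{2-2\beta}$ all check out exactly.
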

\begin{theorem}[\cite{recht2011simple,gross2011recoverlowrank,candesrecht2009mc}]\label{thm:operator_norm}
Suppose $\Omega$ with $|\Omega|=m$ is a set of indices sampled independently and uniformly with replacement. Let $Z\in\R^{n\times n}$ be a fixed matrix. Then for all $\beta >1$,
\begin{align*}
\ln\lb\frac{n^2}{m}\P_\Omega-\I\rb (Z)\rn\leq \sqrt{\frac{8\beta n^3\log(n)}{3m}}\ln Z\rn_\infty
\end{align*}
with probability $1-2n^{1-\beta}$ provided $m\geq 6\beta n\log(n)$. 
\end{theorem}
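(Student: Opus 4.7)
The natural approach is to write $\lb\frac{n^2}{m}\P_\Omega-\I\rb(Z)$ as a sum of $m$ i.i.d.\ mean-zero matrices and apply the Noncommutative Bernstein inequality (Thm.~\ref{thm:nonc_bernstein}). First I would enumerate the sampled indices as $\{(i_k,j_k)\}_{k=1}^m$ drawn uniformly and independently from $[n]\times[n]$, so that $\P_\Omega(Z)=\sum_{k=1}^m Z_{i_k j_k}e_{i_k}e_{j_k}^*$. Setting
\begin{align*}
Y_k=\frac{n^2}{m}Z_{i_k j_k}e_{i_k}e_{j_k}^*-\frac{1}{m}Z,
\end{align*}
one immediately gets $\sum_{k=1}^m Y_k=\lb\frac{n^2}{m}\P_\Omega-\I\rb(Z)$ and $\E[Y_k]=0$, because $\E\lsb Z_{i_k j_k}e_{i_k}e_{j_k}^*\rsb=\frac{1}{n^2}Z$.

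Next I would derive the two ingredients needed by Bernstein. For the uniform bound, $\|e_{i_k}e_{j_k}^*\|=1$ together with the crude estimate $\|Z\|\leq n\|Z\|_\infty$ gives $\|Y_k\|\leq R$ with $R$ of order $\frac{n^2}{m}\|Z\|_\infty$. For the variance proxy, a short expansion yields
\begin{align*}
\E[Y_kY_k^*]=\frac{n^2}{m^2}\,D-\frac{1}{m^2}ZZ^*,\qquad D_{ii}=\sum_j Z_{ij}^2,
\end{align*}
so $\|D\|\leq n\|Z\|_\infty^2$ and, summing over $k$, $\bigl\|\sum_k\E[Y_kY_k^*]\bigr\|$ is of order $\frac{n^3}{m}\|Z\|_\infty^2$; by symmetry of the calculation the same bound applies to $\sum_k\E[Y_k^*Y_k]$. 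Hence one may take $\sigma^2$ of order $\frac{n^3}{m}\|Z\|_\infty^2$.

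Finally I would choose $t=\sqrt{\frac{8\beta n^3\log(n)}{3m}}\|Z\|_\infty$ and invoke Bernstein. The assumption $m\geq 6\beta n\log(n)$ should be exactly what is needed to place $t$ in the regime $t\leq\frac{1}{R}\sum_k\sigma_k^2$, so that the tighter branch $2n\exp\lb-\frac{3t^2}{8\sigma^2}\rb$ is available. Substituting the bounds for $\sigma^2$ and $t^2$, the exponent simplifies to $-\beta\log(n)$, producing the stated failure probability $2n^{1-\beta}$.

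The main obstacle is constant bookkeeping rather than any structural difficulty: one has to check that the loose bounds $\|Z\|\leq n\|Z\|_\infty$ and $\|D\|\leq n\|Z\|_\infty^2$ fit together so that the condition $t\leq\sigma^2/R$ is exactly satisfied under $m\geq 6\beta n\log(n)$, which is the tightest place where the stated numerical constants get used. Everything else — mean zero, the block decomposition of $\E[Y_kY_k^*]$, and the final exponent computation — is routine.
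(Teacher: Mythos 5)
The paper does not actually prove Thm.~\ref{thm:operator_norm}: it is quoted from the literature (Recht's ``simpler approach'' paper) in the appendix of supplementary results, so there is no in-paper argument to compare yours against. Your proposal is the standard proof of that cited result and is essentially correct: the decomposition $Y_k=\frac{n^2}{m}Z_{i_kj_k}e_{i_k}e_{j_k}^*-\frac1mZ$, the variance identity $\E[Y_kY_k^*]=\frac{n^2}{m^2}D-\frac{1}{m^2}ZZ^*$ (and its transpose analogue with column sums of squares), and the choice of $t$ all match the source; the exponent evaluates to $\frac38\cdot\frac{t^2}{\sigma^2}=\beta\log(n)$, giving failure probability $2n\cdot n^{-\beta}=2n^{1-\beta}$. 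It is also the same template the paper uses for its one genuinely new concentration bound, Lem.~\ref{lem:tech3}.

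One warning about the step you flagged as ``constant bookkeeping'': if you round $\|Y_k\|\le\frac{n^2}{m}\|Z\|_\infty+\frac{1}{m}\|Z\|\le\frac{n^2+n}{m}\|Z\|_\infty$ up to $R=\frac{2n^2}{m}\|Z\|_\infty$, the condition $t\le\frac1R\sum_k\sigma_k^2$ becomes $m\ge\frac{32}{3}\beta n\log(n)$, which is \emph{not} implied by $m\ge6\beta n\log(n)$. To recover the stated constant you must keep $R=\frac{n(n+1)}{m}\|Z\|_\infty$, so that the condition reads $m\ge\frac83\beta\log(n)\cdot\frac{(n+1)^2}{n}$, and then use $\frac{(n+1)^2}{n}\le\frac94\,n$ for $n\ge2$, which yields exactly $6\beta n\log(n)$. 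This is a one-line fix, not a structural gap, but as written your ``$R$ of order $\frac{n^2}{m}\|Z\|_\infty$'' would not deliver the advertised constant.
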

\section{Proofs of Technical Lemmas}\label{sec:proofs_of_lemmas}
\subsection{Proof of Lemma~\ref{lem:tech1}}
The proofs for the first five inequalities in this lemma can be found in \cite{CGIHT_dense}. So it only 
remains to prove the last inequality. For any matrix $Z\in\R^{n\times n}$, we have 
\begin{align*}
\lb \P_{T_l}-\P_T\rb (Z) & = U_lU_l^*Z+ZV_lV_l^*-U_lU_l^*ZV_lV_l^*-UU^*Z-ZVV^*+UU^*ZVV^*\\
&=(U_lU_l^*-UU^*)Z(I-VV^*)+(I-U_lU_l^*)Z(V_lV_l^*-VV^*).
\end{align*}
Taking the Frobenius norm on both sides gives 
\begin{align*}
\ln \lb \P_{T_l}-\P_T\rb (Z)\rn_F & \leq \ln U_lU_l^*-UU^*\rn \ln Z\rn_F\ln I-VV^*\rn+\ln I-U_lU_l^*\rn \ln Z\rn_F\ln V_lV_l^*-VV^*\rn\\
&\leq\frac{2\ln X_l-X\rn_F}{\sigmin{X}}\ln Z\rn_F.
\end{align*}
\subsection{Proof of Lemma~\ref{lem:tech2}}
For any $Z\in\R^{n\times n}$, we have 
\begin{align*}
\ln\P_\Omega\P_T(Z)\rn_F^2&=\la \P_\Omega\P_T(Z), \P_\Omega\P_T(Z)\ra\\
&\leq{\frac{8}{3}\beta\log(n)\la\P_T(Z),\P_\Omega\P_T(Z)\ra}\\
&=\frac{8}{3}\beta\log(n)\la\P_T(Z),\P_T\P_\Omega\P_T(Z)\ra\\
&\leq\frac{8}{3}\beta\log(n) (1+\vep_0)p\ln\P_T(Z)\rn_F^2,
\end{align*}
where the first inequality follows from the first assumption and the second inequality follows from the 
second assumption.  So we have  $\ln\P_\Omega\P_T\rn\leq \sqrt{\frac{8}{3}\beta\log(n) (1+\vep_0)p}$ and
\begin{align*}
\ln\P_\Omega\P_{T_l}\rn&\leq \ln \P_\Omega(\P_{T_l}-\P_T)\rn+\ln\P_\Omega\P_T\rn\\
&\leq\Pom\frac{2\ln X_l-X\rn_F}{\sigmin{X}}+\ln\P_\Omega\P_T\rn\\
&\leq \frac{p^{1/2}\vep_0}{1+\vep_0}+\sqrt{\frac{8}{3}\beta\log(n) (1+\vep_0)p}\\
&\leq {\Pom(1+\vep_0)p^{1/2}},
\end{align*}
where the second inequality follows from \eqref{eq:tech1}. To prove \eqref{eq:tech2_eq2}, we use
\begin{align*}
\ln\P_{T_l}-p^{-1}\P_{T_l}\P_\Omega\P_{T_l}\rn&\leq\ln\P_{T_l}-\P_T\rn+p^{-1}\ln\P_{T_l}\P_\Omega\P_{T_l}-\P_{T_l}\P_\Omega\P_T\rn\\
&\quad+p^{-1}\ln \P_{T_l}\P_\Omega\P_T-\P_T\P_\Omega\P_T\rn+\ln \P_T-p^{-1}\P_T\P_\Omega\P_T\rn\\
&\leq\ln\P_{T_l}-\P_T\rn+p^{-1}\ln\P_\Omega\P_{T_l}\rn\ln\P_{T_l}-\P_T\rn\\
&\quad+p^{-1}\ln\P_\Omega\P_{T}\rn\ln\P_{T_l}-\P_T\rn+\ln \P_T-p^{-1}\P_T\P_\Omega\P_T\rn\\
&\leq \frac{2\ln X_l-X\rn_F}{\sigmin{X}}+p^{-1}\ln\P_\Omega\P_{T_l}\rn\frac{2\ln X_l-X\rn_F}{\sigmin{X}}\\
&\quad +p^{-1}\ln\P_\Omega\P_{T}\rn\frac{2\ln X_l-X\rn_F}{\sigmin{X}}+\ln \P_T-p^{-1}\P_T\P_\Omega\P_T\rn\\
&\leq 4\vep_0,
\end{align*}
where in the second inequality we utilize the fact ${\P_\Omega^*=\P_\Omega}$ so that $\ln\P_{T_l}\P_\Omega\rn=\ln\P_\Omega\P_{T_l}\rn$; and the last inequality follows from the assumption and the bounds for $\ln\P_\Omega\P_{T_l}\rn$ and $\ln\P_\Omega\P_{T}\rn$.
\subsection{Proof of Lemma~\ref{lem:tech3}}
Lemma~\ref{lem:tech3} is an asymmetric version of Thm.~\ref{thm:isometry}. We will use the Noncommutative Bernstein inequality (Thm.~\ref{thm:nonc_bernstein}) to prove it.  Let $\Omega = \lcb (i_k,j_k)\rcb_{k=1}^m$ be the sampled set of indices.
For any $Z\in\R^{n\times n}$, since 
\begin{align*}
\lb \P_U-\P_{U_l}\rb(Z)& = \sum_{i,j=1}^n\la \lb \P_U-\P_{U_l}\rb(Z), e_ie_j^*\ra e_ie_j^*=\sum_{i,j=1}^n\la Z, \lb \P_U-\P_{U_l}\rb(e_ie_j^*)\ra e_ie_j^*,
\end{align*}
we have 
\begin{align*}
\P_\Omega\lb \P_U-\P_{U_l}\rb(Z) = \sum_{k=1}^m\la Z, \lb \P_U-\P_{U_l}\rb(e_{i_k}e_{j_k}^*)\ra e_{i_k}e_{j_k}^*
\end{align*}
and 
\begin{align*}
&\P_{T_l}\P_\Omega\lb \P_U-\P_{U_l}\rb(Z) = \sum_{k=1}^m\la Z, \lb \P_U-\P_{U_l}\rb(e_{i_k}e_{j_k}^*)\ra \P_{T_l}\lb e_{i_k}e_{j_k}^*\rb.
\end{align*}
Let $\T_{i_k,j_k}:\R^{n\times n}\rightarrow \R^{n\times n}$ be a rank one linear operator defined as 
\begin{align*}
\T_{i_k,j_k} = \P_{T_l}\lb e_{i_k}e_{j_k}^*\rb\otimes \lb \P_U-\P_{U_l}\rb(e_{i_k}e_{j_k}^*).
\end{align*}
Then we have 
{\begin{align*}
&\P_{T_l}\P_\Omega\lb \P_U-\P_{U_l}\rb=\sum_{k=1}^m\T_{i_k,j_k},\numberthis\label{eq:op_sum}\\
&\T_{i_k,j_k}^*= \lb \P_U-\P_{U_l}\rb(e_{i_k}e_{j_k}^*)\otimes\P_{T_l}\lb e_{i_k}e_{j_k}^*\rb,\numberthis\label{eq:op_trans}\\
&\E\lsb \T_{i_k,j_k}\rsb=\frac{1}{n^2}\P_{T_l}\lb \P_U-\P_{U_l}\rb,\numberthis\label{eq:op_exp}\\
&\E\lsb \T_{i_k,j_k}^*\rsb=\frac{1}{n^2}\lb \P_U-\P_{U_l}\rb\P_{T_l}.\numberthis\label{eq:op_trans_exp}
\end{align*}}
Moreover, 
\begin{align*}
\ln \T_{i_k,j_k}\rn&\leq \ln \lb \P_U-\P_{U_l}\rb(e_{i_k}e_{j_k}^*)\rn_F\ln \P_{T_l} \lb e_{i_k}e_{j_k}^*\rb\rn_F\\
&\leq\lb \ln\P_U(e_{i_k}e_{j_k}^*)\rn_F+\ln\P_{U_l}(e_{i_k}e_{j_k}^*)\rn_F\rb\ln \P_{T_l} \lb e_{i_k}e_{j_k}^*\rb\rn_F\\
&\leq\frac{4\mu r}{n},
\end{align*}
where the last inequality follows from
\begin{align*}
&\ln \P_U(e_{i_k}e_{j_k}^*)\rn_F = \ln\P_U(e_{i_k})e_{j_k}^*\rn_F\leq \sqrt\frac{\mu r}{n},\numberthis\label{eq:op_up_pu}\\
&\ln \P_{U_l}(e_{i_k}e_{j_k}^*)\rn_F = \ln\P_{U_l}(e_{i_k})e_{j_k}^*\rn_F\leq \sqrt\frac{\mu r}{n},\numberthis\label{eq:op_up_pul}
\end{align*}
and 
\begin{align*}
\ln \P_{T_l} \lb e_{i_k}e_{j_k}^*\rb\rn_F^2&=\la  \P_{T_l} \lb e_{i_k}e_{j_k}^*\rb, e_{i_k}e_{j_k}^*\ra\\
&=\la \P_{U_l}(e_{i_k})e_{j_k}^*+e_{i_k}(\P_{V_l}(e_{j_k}))^*-\P_{U_l}(e_{i_k})(\P_{V_l}(e_{j_k}))^*,e_{i_k}e_{j_k}^*\ra\\
&=\ln \P_{U_l}(e_{i_k})\rn^2+\ln \P_{V_l}(e_{j_k})\rn^2-\ln \P_{U_l}(e_{i_k})\rn^2\ln \P_{V_l}(e_{j_k})\rn^2\\
&\leq \frac{2\mu r}{n}.\numberthis\label{eq:bd_of_pte}
\end{align*}
So 
\begin{align*}
\ln \T_{i_k,j_k}-\E\lsb \T_{i_k,j_k}\rsb\rn&\leq \ln \T_{i_k,j_k}\rn+\ln \E\lsb \T_{i_k,j_k}\rsb\rn\leq \frac{4\mu r}{n}+\frac{2}{n^2}\leq \frac{5\mu r}{n}.\numberthis\label{eq:op_up_bd}
\end{align*}
To apply the Bernstein inequality, we also need to bound
\begin{align*}
\ln\E\lsb\lb\T_{i_k,j_k}-\E(\T_{i_k,j_k})\rb^*\lb\T_{i_k,j_k}-\E(\T_{i_k,j_k})\rb\rsb\rn
\end{align*}
and 
\begin{align*}
\ln\E\lsb\lb\T_{i_k,j_k}-\E(\T_{i_k,j_k})\rb\lb\T_{i_k,j_k}-\E(\T_{i_k,j_k})\rb^*\rsb\rn.
\end{align*}
The first one can be proceeded as follows
\begin{align*}
&\ln\E\lsb\lb\T_{i_k,j_k}-\E(\T_{i_k,j_k})\rb^*\lb\T_{i_k,j_k}-\E(\T_{i_k,j_k})\rb\rsb\rn\\
&=\ln\E\lsb \T_{i_k,j_k}^*\T_{i_k,j_k}\rsb-\E\lsb\T_{i_k,j_k}^*\rsb\E\lsb \T_{i_k,j_k}\rsb\rn\\
&\leq\ln \E\lsb \T_{i_k,j_k}^*\T_{i_k,j_k}\rsb\rn+\ln\frac{1}{n^4}\lb \P_U-\P_{U_l}\rb\P_{T_l}\lb \P_U-\P_{U_l}\rb\rn\\
&\leq \ln\E\lsb \ln \P_{T_l}\lb e_{i_k}e_{j_k}^*\rb\rn_F^2\lb \P_U-\P_{U_l}\rb(e_{i_k}e_{j_k}^*)\otimes\lb \P_U-\P_{U_l}\rb(e_{i_k}e_{j_k}^*)\vphantom{\ln \P_{T_l}\lb e_{i_k}e_{j_k}^*\rb\rn_F^2}\rsb\vphantom{\ln \P_{T_l}\lb e_{i_k}e_{j_k}^*\rb\rn_F^2}\rn+\frac{4}{n^4}\\
&\leq\frac{2\mu r}{n}\ln\E\lsb\lb\P_U-\P_{U_l}\rb(e_{i_k}e_{j_k}^*)\otimes \lb\P_U-\P_{U_l}\rb(e_{i_k}e_{j_k}^*)\rsb\rn+\frac{4}{n^4}\\
&\leq\frac{2\mu r}{n}\frac{1}{n^2}\ln (\P_U-\P_{U_l})^2\rn+\frac{4}{n^4}\\
&\leq\frac{9\mu r}{n^3},\numberthis\label{eq:adj1_bd}
\end{align*}
where the first inequality follows from \eqref{eq:op_exp} and \eqref{eq:op_trans_exp}, the third inequality follows from 
\eqref{eq:bd_of_pte}, and the fourth inequality follows from the the fact
\begin{align*}
&\E\lsb\lb\P_U-\P_{U_l}\rb(e_{i_k}e_{j_k}^*)\otimes \lb\P_U-\P_{U_l}\rb(e_{i_k}e_{j_k}^*)\rsb= \frac{1}{n^2}\lb \P_U-\P_{U_l}\rb^2.
\end{align*}
Similarly, we have 
\begin{align*}
&\ln\E\lsb\lb\T_{i_k,j_k}-\E(\T_{i_k,j_k})\rb\lb\T_{i_k,j_k}-\E(\T_{i_k,j_k})\rb^*\rsb\rn\\
&=\ln\E\lsb \T_{i_k,j_k}\T_{i_k,j_k}^*\rsb-\E\lsb\T_{i_k,j_k}\rsb\E\lsb\T^*_{i_k,j_k}\rsb\rn\\
&\leq\ln\E\lsb \T_{i_k,j_k}\T_{i_k,j_k}^*\rsb\rn+\frac{1}{n^4}\ln\P_{T_l}(\P_U-\P_{U_l})^2\P_{T_l}\rn\\
&\leq \ln\E\left[\ln  \lb\P_U-\P_{U_l}\rb(e_{i_k}e_{j_k}^*)\right\|_F^2\P_{T_l}\lb e_{i_k}e_{j_k}^*\rb\otimes\P_{T_l}\lb e_{i_k}e_{j_k}^*\rb\vphantom{\ln  \lb\P_U-\P_{U_l}\rb(e_{i_k}e_{j_k}^*)\right\|_F^2}\right]\vphantom{\ln  \lb\P_U-\P_{U_l}\rb(e_{i_k}e_{j_k}^*)\right\|_F^2}\rn+\frac{4}{n^2}\\
&\leq\frac{4\mu r}{n}\ln\E\lsb \P_{T_l}\lb e_{i_k}e_{j_k}^*\rb\otimes\P_{T_l}\lb e_{i_k}e_{j_k}^*\rb\rsb\rn+\frac{4}{n^4}\\
&\leq\frac{4\mu r}{n}\frac{1}{n^2}\ln\P_{T_l}\rn+\frac{4}{n^4}\\
&\leq\frac{5\mu r}{n^3},\numberthis\label{eq:adj2_bd}
\end{align*}
where the first inequality follows from \eqref{eq:op_exp} and \eqref{eq:op_trans_exp}, the third inequality follows from 
\eqref{eq:op_up_pu} and \eqref{eq:op_up_pul}, and the fourth inequality follows from the fact
\begin{align*}
\E\lsb \P_{T_l}\lb e_{i_k}e_{j_k}^*\rb\otimes\P_{T_l}\lb e_{i_k}e_{j_k}^*\rb\rsb=\frac{1}{n^2}\P_{T_l}.
\end{align*}
Finally, the theorem follows by applying Thm.~\ref{thm:nonc_bernstein} to \eqref{eq:op_sum} with the bounds
\eqref{eq:op_up_bd}, \eqref{eq:adj1_bd} and \eqref{eq:adj2_bd}.
\subsection{Proof of Lemma~\ref{lem:tech4}}
By the assumption, \eqref{eq:tech4_eq1} is valid for $l=0$.  Assume it is valid for $j\leq l$. Then 
\begin{align*}
c_{l+1} &\leq \rho_1c_{l}+\rho_2\sum_{j=0}^{l-1}\gamma^{l-j}c_j\\
&\leq \rho_1\nu^lc_0+\rho_2\sum_{j=0}^{l-1}\gamma^{l-j}\nu^jc_0\\
&=\nu^{l}c_0\lb \rho_1+\rho_2\sum_{j=0}^{l-1}\lb\frac{\gamma}{\nu}\rb^{l-j}\rb\\
&\leq \nu^{l}c_0\lb\rho_1+\frac{\rho_2\gamma/\nu}{1-\gamma/\nu}\rb\\
&= \nu^{l}c_0\lb\rho_1+\frac{\rho_2\gamma}{\nu-\gamma}\rb.
\end{align*}
So it suffices to show that $\rho_1+\frac{\rho_2\gamma}{\nu-\gamma}=\nu$. It is equivalent to show that 
\begin{align*}
\nu^2-(\rho_1+\gamma)\nu-(\rho_2-\rho_1)\gamma=0.
\end{align*} 
which is true since 
\begin{align*}
\nu=\frac{1}{2}\lb\tau_1+\sqrt{\tau_1^2+4\tau_2}\rb
\end{align*}
with $\tau_1=\rho_1+\gamma$ and $\tau_2=(\rho_2-\rho_1)\gamma$.
\subsection{Proof of Lemma~\ref{lem:tech5}}
Since 
\begin{align*}
&\ln U_l-UQ\rn_F^2 =  2r-2\la U_l,UQ\ra,\\
&\ln U_lU_l^*-UU^*\rn_F^2 = 2r-2\la U_lU_l^*,UU^*\ra,
\end{align*}
it suffices to show that there exist a $Q$ such that 
\begin{align*}
\la U_l,UQ\ra\geq \la U_lU_l^*,UU^*\ra.
\end{align*}
It is equivalent to show that 
\begin{align*}
\la U^*U_l,Q\ra\geq\la U^*U_l,U^*U_l\ra
\end{align*}
for some unitary matrix $Q\in\R^{r\times r}$. Let $U_l^*U_l=Q_1\Lambda Q_2^*$ be the singular value decomposition of $U_l^*U_l$. Then we have $\Lambda(i,i)\leq 1~(1\leq i\leq r)$, and we can choose $Q=Q_1Q_2^*$.
\end{document}